\newtheorem{thm}{Theorem}
\newtheorem{lem}[thm]{Lemma}
\newtheorem{prop}[thm]{Proposition}
\def\C{\mathbb{C}}
\def\R{\mathbb{R}}
\def\P{\mathbb{P}}
\def\Q{\mathbb{Q}}
\def\E{\mathbb{E}}
\def\N{\mathbb{N}}
\def\diffd{\mathrm{d}}
\def\cF{\mathcal{F}}
\newcommand{\indic}[1]{\mathbbm{1}_{\{#1\}}}
\def\TW{{\omega}} % the x dependent traveling wave
\def\pr{{u}}		% the x,t dependent front
\def\tpr{{\tilde u}}
\def\Nhits{{K}}
\def\auxsol{v}
\def\Nabs{\mathcal{N}_{\text{abs}}}
\def\Nabs{\mathcal{N}_{\text{live}}}
\def\Nall{{\mathcal{N}_{\text{all}}}}
\def\Nstop{{\mathcal{N}_{\text{stop}}}}
\def\Nstop{{\mathcal{N}_{\text{live+abs.}}}}
\def\Zall{{Z}_{\text{all}}}
\def\Zstop{{Z}_{\mathrm{stop}}}
\def\Zstop{{Z}_{\mathrm{live+abs.}}}
\newcommand{\TWs}{\TW_{s_0}}
\title{Branching Brownian motion with absorption and the all-time minimum of branching Brownian motion with drift}
\author{Julien Berestycki\thanks{Department of Statistics, University of Oxford, Oxford OX1~3TG, UK. Email: \texttt{julien.berestycki@stats.ox.ac.uk}}, \'Eric Brunet\thanks{LPS-ENS, UPMC, CNRS, 24 rue Lhomond, 75231 Paris Cedex 05, France. Email: \texttt{Eric.Brunet@lps.ens.fr}},  Simon C. Harris\thanks{Department of Mathematical Sciences, University of Bath, Bath BA2~7AY, UK. Email: \texttt{S.C.Harris@bath.ac.uk}}, Piotr Mi\l{}o\'{s}\thanks{ Faculty of Mathematics, Informatics and Mechanics, University of Warsaw, Banacha 2, 02-097 Warszawa, Poland. Email: \texttt{pmilos@mimuw.edu.pl} } }
\begin{document}

\maketitle

\begin{abstract}
We study a dyadic branching Brownian motion on the real line with
absorption at 0, drift $\mu \in \R$ and started from a single
particle at position $x>0.$ When $\mu$ is large enough so that the
process has a positive probability of survival, we consider $\Nhits(t),$ the number of
individuals absorbed at 0 by time $t$ and for $s\ge 0$ the functions $\TW_s(x):= \E^x[s^{\Nhits(\infty)}].$  We show that $\TW_s<\infty$ if and only of $s\in[0,s_0]$ for some $s_0>1$ and we study the properties of these functions. Furthermore, for $s=0, \TW(x) := \TW_0(x) =\P^x(\Nhits(\infty)=0)$ is the cumulative distribution function of the all time minimum of the branching Brownian motion with drift started at 0 without absorption. 

We give three descriptions of the family $\TW_s, s\in [0,s_0]$ through a single pair of functions, as the two extremal solutions of the Kolmogorov-Petrovskii-Piskunov (KPP) traveling wave equation on the half-line, through a martingale representation and as an explicit series expansion. 
We also obtain a precise result concerning the tail behavior of $\Nhits(\infty)$. In addition, in the regime where $\Nhits(\infty)>0$ almost surely, we show that $u(x,t) := \P^x(\Nhits(t)=0)$ suitably centered  converges to the KPP critical travelling wave on the whole real line.

\end{abstract}
\maketitle

\section{Introduction}

Consider a branching Brownian motion in which particles move according to
a Brownian motion with drift $\mu \in \R$ and split into two particles at
rate $\beta$ independently one from another. Call $\Nall(t)$ the
population of all particles at time $t$ and call $X_u(t)$  the position of a
given particle $u \in \Nall(t)$. When we start with a single particle at
position $x$ we write $\P^x$ for the law of this process. 

In a seminal paper, \cite{kesten78}, Kesten considered the branching
Brownian motion with absorption, {\it i.e.}\@ the model just described
with the additional property  that particles  entering the negative
half-line $(-\infty,0]$ are immediately absorbed and removed. We write
$\Nabs(t)$ for the set of particles alive (not absorbed) in the branching
Brownian motion with  absorption and $\Nhits(t)$ the number of particles
that have been absorbed up to time $t$. The system with absorption is
said to become extinct if $\exists t \ge 0 : \Nabs(t)=\emptyset$ and to
survive otherwise.
We let $\Nhits(\infty):=\lim_{t\to\infty}\Nhits(t)\in \R\cup\{\infty\}$.

Depending on the value of $\mu $ one has the following behaviours (see
Figure~\ref{F:fig 1})
\begin{itemize}
\item[Regime A:] if $\mu\le-\sqrt{2\beta}$, the drift towards origin is
so large that the system goes extinct almost surely. 
$\Nhits(\infty)$ is finite and non-zero.
\item[Regime B:] if $-\sqrt{2\beta}<\mu<\sqrt{2\beta}$ there is a
non-zero probability of survival. On survival, there will always be
particles near 0 and  $\Nhits(\infty)=\infty$ almost surely.  
\item[Regime C:] if $\mu\ge\sqrt{2\beta}$ there is still a non-zero
probability of survival, but the system is drifting so fast away from 0
that, on survival,   $ \min_{ u \in \Nall(t) } X_u(t) $ drifts to
$+\infty$ almost surely as $t \to \infty$; $\Nhits(\infty)$ is thus
almost surely finite. Furthermore, there is a non-zero probability that
$\Nhits(\infty)=0$.
\end{itemize}

\begin{figure}[ht]\label{F:fig 1}
\begin{center}
\input{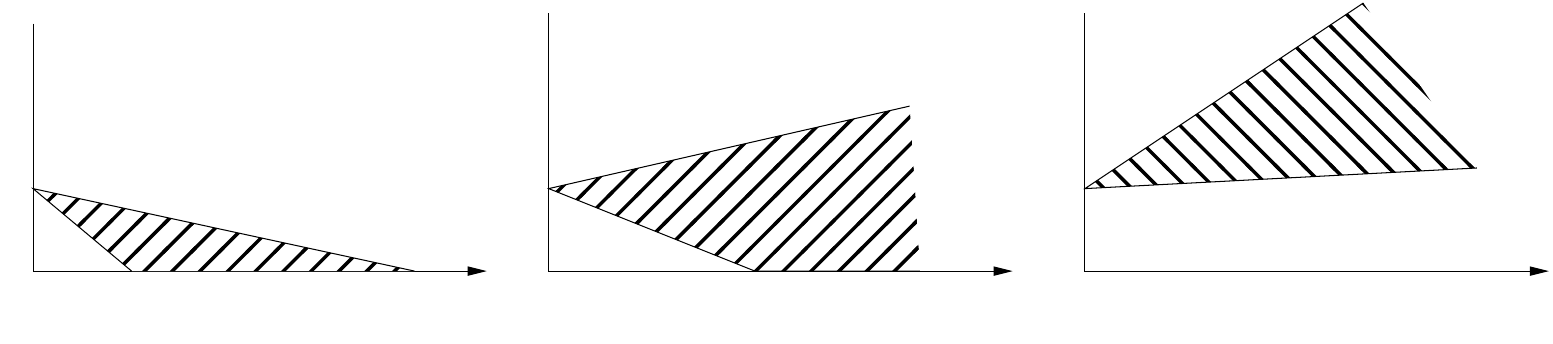_t}
\caption{ The three regimes: The hashed region represents the cone in which
one expects to find particles.}
\end{center}
\end{figure}

The behaviour of $\Nhits(\infty)$ in regime $A$ ($\mu\le-\sqrt{2\beta}$)
has
been the subject of very active research recently, including a conjecture
by Aldous which was recently settled by P. Maillard \cite{maillard} (we
discuss Maillard's results bellow), improving earlier results of L.
Addario-Berry and N. Broutin \cite{abb} and E. A\"\i d\'ekon \cite{aidekon}. 
 Surprisingly, relatively little was
known concerning the regimes B and~C. Our main results in the present
work concern the study of $\Nhits(\infty)$ and of
certain related KPP-type equations. 

\subsection{The tail behaviour of $\Nhits(\infty)$}

In \cite{maillard}, Pascal Maillard has shown that in regime A ($\mu \le
-\sqrt{2\beta}$) the variable $\Nhits(\infty)$ has a very fat tail. More
precisely he shows that, as $z\to\infty$, there exists two constants $c,c'$ which depend on $x$ such that
$$
\P^x[\Nhits(\infty)>z] \sim 
\begin{cases}
c /(z\log (z)^2) &\text{for $\mu = -\sqrt{2\beta}$},\\[.5ex]
c' z^{-a(\mu)} &\text{for $\mu < -\sqrt{2\beta}$ where
$a(\mu) =\frac{\mu +\sqrt{\mu^2-2\beta} }{\mu -\sqrt{\mu^2-2\beta}}$.}
\end{cases}
$$

In regime B ($-\sqrt{2\beta } <\mu <\sqrt{2\beta}$) it is clear that
$\Nhits(\infty)=\infty$ on survival so one would essntially condition
on extinction to study the tail behaviour of
$\Nhits(\infty)$. This is outside the scope
of the present work.

In regime~C ($\mu \ge \sqrt{2\beta}$), however, $\Nhits(\infty)$ is
almost surely finite. We introduce for $s\ge0$ and $x\ge0$,
\begin{equation}
\label{w_s}
\TW_s(x):=\E^x\big[s^{\Nhits(\infty)}\big], \quad \TW_s(0)=s.
\end{equation}
 When $s\in[0,1]$ this quantity is the generating
function of $K(\infty)$. We show that 
$\omega_s(x)$ is finite for
some values of $s$ larger than 1:

\begin{thm}\label{main thm: exp moments}
In regime~C ($\mu \ge \sqrt{2\beta}$), there exists a finite $s_0>1$
depending only on $\mu/\sqrt\beta$ such that
\begin{enumerate}
\item For $s\le s_0$, $\TW_s(x)$ is finite for all $x\ge0$,
\item For $s >  s_0$, $\TW_s(x)$ is infinite for all $x>0$.
\end{enumerate}
The functions $x\mapsto\TW_s(x)$ are increasing for any
$s\in[0,1)$ and decreasing for any $s\in(1,s_0]$, converging to~1 when
$x\to\infty$, and one has $\TW'_{s_0}(0)=0$.

Furthermore,  one has
for $n$ large
\begin{equation}\label{maillard stuff}
\P^x[\Nhits(\infty)=n]\sim { -\TWs'(x)  \over  2s_0^{n} n^{\frac32} \sqrt{\pi\beta(s_0-1)}  }.
\end{equation}
\end{thm}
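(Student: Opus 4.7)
The plan is to lift $\TW_s$ to a boundary value problem for a KPP ODE on $[0,\infty)$, then extract all claims from a phase-plane analysis together with a singularity analysis of the generating function at its radius of convergence.

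First, by conditioning on the first branching time $\tau\sim\mathrm{Exp}(\beta)$ (or, equivalently, by applying It\^o's formula to the multiplicative process $\prod_{u\in\Nabs(t)}\TW_s(X_u(t))\,s^{\Nhits(t)}$), I would show that $\TW_s$ solves
\begin{equation*}
\tfrac12 \TW_s'' + \mu\,\TW_s' + \beta(\TW_s^2 - \TW_s)=0 \qquad\text{on }(0,\infty),
\end{equation*}
with $\TW_s(0)=s$. The boundary value $\TW_s(+\infty)=1$ follows because a single particle launched far from the origin with drift $\mu\ge\sqrt{2\beta}$ escapes to $+\infty$ almost surely, in which case $\Nhits(\infty)=0$. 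Writing the ODE as the planar system $\dot\TW=p,\ \dot p=-2\mu p+2\beta\,\TW(1-\TW)$, the fixed point $(1,0)$ is a stable node in regime~C (the two eigenvalues $-\mu\pm\sqrt{\mu^2-2\beta}$ are real and negative), while $(0,0)$ is a saddle.

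Among ODE solutions reaching $(1,0)$, the probabilistic $\TW_s$ is singled out as the monotone-in-$T$ limit of the finite-time generating functions $\E^x[s^{\Nhits(T)}]$. As $s$ varies, the resulting family of initial data $(\TW_s(0),\TW_s'(0))=(s,p(s))$ traces a single smooth curve $\Gamma$ in the phase plane: starting from the unstable separatrix of $(0,0)$, passing through $(1,0)$ at $s=1$, entering the region $\{\TW>1,\,p<0\}$, and reaching a rightmost point $(s_0,0)$. This simultaneously gives the existence range $s\in[0,s_0]$, the identity $\TW_{s_0}'(0)=0$, and the monotonicity of $x\mapsto\TW_s(x)$ (from the sign of $p(s)$ along $\Gamma$). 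For $s>s_0$, $\E^x[s^{\Nhits(T)}]$ is increasing in $T$, and any finite limit would give an ODE solution with $\TW(0)=s>s_0$ and $\TW(\infty)=1$, contradicting the phase-plane picture; hence $\TW_s\equiv+\infty$.

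For the tail, the key observation is that at $(s_0,0)$ the planar vector field equals $(0,2\beta s_0(1-s_0))\ne 0$, so $\Gamma$ is locally smooth with vertical tangent and admits a parametrization by the initial slope $p=\TW'(0)$:
\begin{equation*}
\TW^{(p)}(0) = s_0 - c\,p^2 + O(p^3),\qquad \TW^{(p)}(x) = \TW_{s_0}(x) + a(x)\,p + O(p^2),
\end{equation*}
for some $c>0$ and smooth $a(\cdot)$. Inverting on the $p<0$ branch (which corresponds to $s$ slightly below $s_0$ on the $s>1$ side) yields the square-root singularity
\begin{equation*}
\TW_s(x) = \TW_{s_0}(x) - \frac{a(x)}{\sqrt c}\sqrt{s_0-s}+O(s_0-s), \qquad s\uparrow s_0,
\end{equation*}
and Flajolet--Odlyzko singularity analysis converts this into
\begin{equation*}
\P^x[\Nhits(\infty)=n]\ \sim\ \frac{a(x)\sqrt{s_0/c}}{2\sqrt\pi\,n^{3/2}\,s_0^n}.
\end{equation*}
Both $a$ and $\TW_{s_0}'$ solve the linearized KPP ODE $\tfrac12 u''+\mu u'+\beta(2\TW_{s_0}-1)u=0$, vanish at $x=0$, and decay at $+\infty$; matching $a'(0)=1$ with $\TW_{s_0}''(0)=-2\beta s_0(s_0-1)$ (the KPP ODE at $x=0$, using $\TW_{s_0}'(0)=0$) identifies $a=-\TW_{s_0}'/(2\beta s_0(s_0-1))$. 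A parallel second-order perturbation for $\partial_p^2\TW^{(p)}$ at $p=0$, again using the KPP ODE at $x=0$, yields $c=1/(4\beta s_0(s_0-1))$, at which point the asymptotic constant collapses to the claimed $-\TW_{s_0}'(x)/\bigl(2 s_0^n n^{3/2}\sqrt{\pi\beta(s_0-1)}\bigr)$.

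The main obstacle, I expect, is the last stage: first, upgrading the local expansion near $s_0$ into analyticity on a $\Delta$-domain around $s_0$ (needed to invoke the Flajolet--Odlyzko transfer theorem), which hinges on the regularity of the flow through the turning point $(s_0,0)$; and second, the bookkeeping of the second-order perturbation that produces $c$ in closed form and hence gives the exact constant $\sqrt{\pi\beta(s_0-1)}$. The preceding ODE/phase-plane steps are comparatively routine KPP technology.
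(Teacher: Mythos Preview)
Your strategy---KPP ODE, square-root singularity at $s_0$, Flajolet--Odlyzko transfer---is the paper's, and your perturbation calculation correctly recovers the constant. Where the paper differs in execution is that it organises everything through the coupling identity $\TW_s(x+h)=\TW_{\TW_s(h)}(x)$, proved by absorbing first at level $-x$ and then at $-x-h$. This one line shows that your curve $\Gamma$ \emph{is} the planar orbit through $(s_0,0)$ (resp.\ through $(0,\TW_0'(0))$), gives monotonicity immediately, and delivers the local expansion without any perturbation bookkeeping: since $\TW_{s_0}(z)=s_0-\beta(s_0^2-s_0)z^2+O(z^3)$, one inverts to $z=z(s)$ and reads off $\TW_s(x)=\TW_{s_0}\big(x+z(s)\big)$, whence $\partial_s\TW_s(x)\sim-\TW_{s_0}'(x)\big/\big(2\sqrt{\beta(s_0-s)(s_0^2-s_0)}\,\big)$ directly.

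The genuine gap is the $\Delta$-domain analyticity, which you mislocate. Regularity of the real flow through the turning point $(s_0,0)$ only gives a slit neighbourhood $D(s_0,r)\setminus[s_0,\infty)$; the transfer theorem also requires ruling out any other singularity on the circle $|s|=s_0$, and that is a global, complex-analytic question which no phase-plane picture addresses. The paper's argument is substantial: set $a(s):=\TW_s'(0)$, derive from the coupling identity the complex ODE $\tfrac12\,a\,\partial_s a+\mu a+\beta(s^2-s)=0$, prove $a$ is continuous on $\overline{D(0,s_0)}$ via an integral representation, and then show $a(s^*)\ne0$ for every $s^*\in\partial D(0,s_0)\setminus\{s_0\}$ by a Pringsheim-type sign argument on the power-series coefficients $p_n'(0)$ (all are nonnegative except $n=1$). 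An analytic-continuation lemma for ODEs (Maillard's Fact~5.2) then extends $a$, and from it $\TW_s(x)$, across the circle.

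A smaller gap: ``$\Gamma$ reaches a rightmost point $(s_0,0)$'' presupposes $1<s_0<\infty$, which does not fall out of the ODE alone. The paper proves $s_0<\infty$ by exhibiting events giving $\P^x[\Nhits(\infty)=n]\ge q^n\,\P^x[\Nhits(\infty)=0]$ for some $q>0$, and $s_0>1$ by constructing an explicit supersolution at the critical drift $\mu=\sqrt{2\beta}$ and then invoking monotonicity of $s_0$ in $\mu/\sqrt\beta$.
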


%This strongly suggests that for $n$ large one has
%$\P^x[\Nhits(\infty)=n]\sim c s_0^{-n}/(2n^{3/2})$,
%but we have no proof of that estimate.

Using the branching structure and a simple coupling allows to relate the
$\TW_s(x)$ with each others, as in the following result (which is already
present in \cite{neveu,maillard}):
\begin{thm}\label{thm w_s}In regime~C ($\mu\ge\sqrt{2\beta}$),
\begin{enumerate}
\item For each $s\in [0,1)$,\qquad
$\TW_{s}(x)=\TW_0(x+\TW_0^{-1}(s))$,
\item For each $s\in(1,s_0]$,\qquad
$\TW_s(x) =\TW_{s_0}(x+\TW_{s_0}^{-1}(s)).$
\end{enumerate}
\end{thm}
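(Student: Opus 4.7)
The plan is to prove both identities by decomposing the BBM started at $x+y$ via the stopping line at level $y$, where $y$ is chosen to be $\TW_0^{-1}(s)$ in case~1 and $\TW_{s_0}^{-1}(s)$ in case~2. Well-definedness of these inverses requires $\TW_0$ (resp.\ $\TW_{s_0}$) to be a bijection of $[0,\infty)$ onto $[0,1)$ (resp.\ onto $(1,s_0]$), which follows from Theorem~\ref{main thm: exp moments}: $\TW_0$ is continuous, strictly increasing from $\TW_0(0)=0$ to $1$, and $\TW_{s_0}$ is continuous, strictly decreasing from $\TW_{s_0}(0)=s_0$ to $1$. (Strict monotonicity can be deduced from the derivatives or from a simple coupling argument on the BBM.)

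Given $y\ge0$, the strategy is to compare two processes started at $x+y$: the branching Brownian motion absorbed at $y$, and the one absorbed at~$0$. By translation invariance, the former is equivalent to the BBM started at $x$ absorbed at~$0$, so the random stopping line $\Lambda_y$ of particles first hitting level $y$ (starting from $x+y$) has cardinality distributed as $\Nhits(\infty)$ under $\P^x$. On the other hand, by the strong branching Markov property along the stopping line $\Lambda_y$ (Chauvin's stopping line theorem), conditionally on $\Lambda_y$ the subtrees emanating from each $u\in\Lambda_y$ are i.i.d.\ copies of BBM started at $y$; the number of particles from the $u$-subtree that are eventually absorbed at $0$ is an independent copy of $\Nhits(\infty)$ under $\P^y$. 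Summing yields the decomposition
\begin{equation*}
\Nhits(\infty) \text{ under } \P^{x+y} \stackrel{d}{=} \sum_{u\in\Lambda_y} \Nhits^{(u)}(\infty),
\end{equation*}
where $|\Lambda_y|\stackrel{d}{=}\Nhits(\infty)$ under $\P^x$ and each $\Nhits^{(u)}(\infty)$ is independent with the law of $\Nhits(\infty)$ under $\P^y$.

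Taking $\E$ of $r^{\Nhits(\infty)}$ for $r\in\{0,s_0\}$ and conditioning on $\Lambda_y$ gives the key identity
\begin{equation*}
\TW_r(x+y) \;=\; \E\bigl[\TW_r(y)^{|\Lambda_y|}\bigr] \;=\; \E^x\bigl[\TW_r(y)^{\Nhits(\infty)}\bigr].
\end{equation*}
For part~1, pick $r=0$ and $y=\TW_0^{-1}(s)$, so that $\TW_0(y)=s$; the right-hand side becomes $\E^x[s^{\Nhits(\infty)}]=\TW_s(x)$, giving $\TW_s(x)=\TW_0(x+\TW_0^{-1}(s))$. For part~2, pick $r=s_0$ and $y=\TW_{s_0}^{-1}(s)$, so that $\TW_{s_0}(y)=s$; then the right-hand side is $\TW_s(x)$ (finite because $s\le s_0$, by Theorem~\ref{main thm: exp moments}), yielding $\TW_s(x)=\TW_{s_0}(x+\TW_{s_0}^{-1}(s))$.

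The main technical obstacle is the justification of the stopping-line decomposition in regime~C: with an upward drift $\mu\ge\sqrt{2\beta}$, not every genealogical line started at $x+y$ reaches level $y$, so one must argue that $\Lambda_y$ is an almost surely finite (possibly empty) stopping line and that the strong branching Markov property applies. This is standard (Chauvin's theorem for BBM), but one should record the translation equivalence ``BBM from $x+y$ killed at $y$ $\equiv$ BBM from $x$ killed at $0$'' carefully so that $|\Lambda_y|$ is identified with $\Nhits(\infty)$ under $\P^x$. Everything else is an application of Fubini combined with the fact that the inverses $\TW_0^{-1}$ and $\TW_{s_0}^{-1}$ are well-defined by the monotonicity and boundary values recalled above.
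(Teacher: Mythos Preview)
Your proposal is correct and follows essentially the same route as the paper. The paper packages the stopping-line decomposition as a general coupling identity $\TW_s(x+h)=\TW_{\TW_s(h)}(x)$ (its Lemma~\ref{lem:coupling}) and then specializes to $s=0$ and $s=s_0$, whereas you go directly to those two cases; the underlying argument---branching Markov property along the stopping line at level $y$, plus translation invariance to identify $|\Lambda_y|$ with $\Nhits(\infty)$ under $\P^x$---is identical. One small caveat: you invoke Theorem~\ref{main thm: exp moments} for the monotonicity needed to define $\TW_0^{-1}$ and $\TW_{s_0}^{-1}$, but in the paper's logical order that monotonicity is itself derived from the coupling identity you are proving, so be careful not to create a circularity in your write-up.
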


We do not have an explicit expression for $s_0$ as a function of
$\mu/\sqrt\beta$, but we can evaluate it numerically with a good precision
as shown in Figure~\ref{figs0}.
In the critical case $\mu =\sqrt{2\beta} $, we obtain $s_0=1.3486\ldots$.

\begin{figure}[ht]\centering
\includegraphics[width=.6\textwidth]{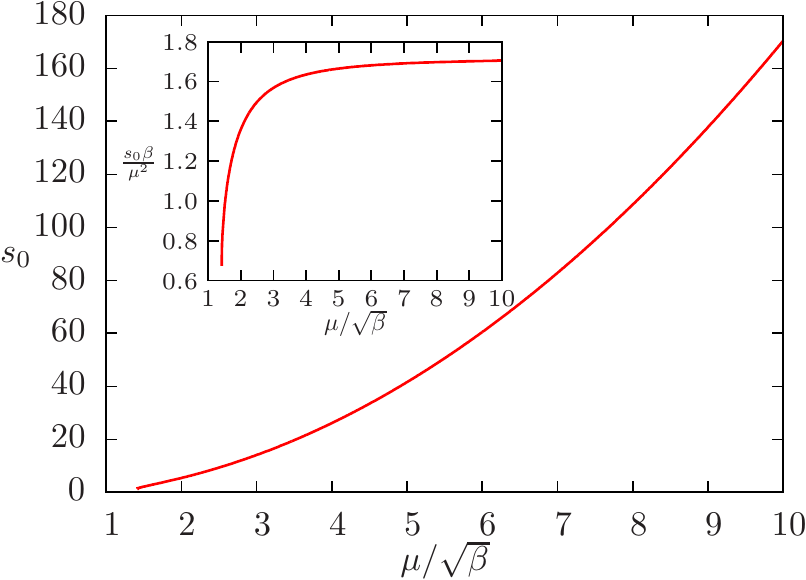}
\caption{Numerical determination of $s_0$
and $s_0\beta/\mu^2$ as functions of $\mu/\sqrt\beta$.}
\label{figs0}
\end{figure}

We can prove the following property:
\begin{prop}\label{prop:s0}
$s_0$ is an increasing function of $\mu/\sqrt\beta$ and furthermore
$s_0 \sim
c\mu^2/\beta$ for some constant~$c$ as $\mu/\sqrt\beta \to
\infty$.
\end{prop}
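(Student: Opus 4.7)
The plan is to work with the autonomous ODE satisfied by $\TW_{s_0}$. After rescaling $y = \sqrt\beta\,x$ and setting $\bar\mu := \mu/\sqrt\beta$, the original KPP equation becomes
\[
\tfrac12 f'' + \bar\mu f' + f(f-1) = 0, \qquad f(0) = s_0,\ f'(0) = 0,\ f(\infty) = 1,
\]
which immediately shows that $s_0$ depends only on $\bar\mu$. In the $(f,v=f')$ phase plane this is a trajectory from $(s_0,0)$ asymptoting to the stable node $(1,0)$ (with eigenvalues $-\bar\mu \pm \sqrt{\bar\mu^2-2}$ in regime C). Parameterising by $f \in [1,s_0]$ and writing $w(f) := -v \ge 0$ gives the first-order equation
\[
w\,\frac{dw}{df} = 2\bar\mu w - 2f(f-1), \qquad w(1) = w(s_0) = 0,\ w > 0 \text{ on } (1,s_0),
\]
with slow-manifold entry $w(f) \sim A(\bar\mu)(f-1)$ near $f=1^+$, where $A(\bar\mu) := \bar\mu - \sqrt{\bar\mu^2-2}$ is strictly decreasing in $\bar\mu$.

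For the monotonicity, I compare the trajectories $w_1,w_2$ associated with $\bar\mu_1 < \bar\mu_2$. Near $f=1^+$ one has $w_1 > w_2$ since $A(\bar\mu_1) > A(\bar\mu_2)$. At any intersection point $f^\star$, subtracting the ODEs gives $(dw_2/df - dw_1/df)|_{f^\star} = 2(\bar\mu_2-\bar\mu_1) > 0$, so crossings are transversal and of the type ``$w_2$ catches up from below''; in particular at most one crossing can occur. The main work is to show that a crossing actually occurs, so that $w_1$ hits zero before $w_2$ does and $s_0^{(1)} < s_0^{(2)}$. This combines a refined local expansion of $w_1-w_2$ at $f=1^+$ to higher order (using the next terms beyond the linear slow-manifold asymptotics to fix the sign of $d(w_1-w_2)/df$ there), with the energy identity
\[
\bar\mu \int_1^{s_0} w(f)\,df = \frac{s_0^3}{3} - \frac{s_0^2}{2} + \frac{1}{6}
\]
obtained by integrating $(w^2)' = 4\bar\mu w - 4f(f-1)$, which constrains the integrated behaviour of each trajectory.

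For the asymptotic $s_0 \sim c\mu^2/\beta$, introduce $\tilde g := (f-1)/\bar\mu^2$ and $\xi := \bar\mu y$; the ODE becomes $\tfrac12\tilde g'' + \tilde g' + \tilde g^2 + \tilde g/\bar\mu^2 = 0$, whose formal $\bar\mu\to\infty$ limit
\[
\tfrac12\tilde g'' + \tilde g' + \tilde g^2 = 0, \qquad \tilde g(0) = c,\ \tilde g'(0) = 0,\ \tilde g(\infty) = 0,
\]
with $c := \lim s_0/\bar\mu^2$, is the inner problem. The $(h,H)$ reduction with $h := \tilde g/c \in [0,1]$ and $H(h) := -d\tilde g/d\xi$ yields the autonomous boundary-value problem
\[
H\,\frac{dH}{dh} = 2H - 2c h^2, \qquad H(0) = H(1) = 0,\ H > 0 \text{ on }(0,1),
\]
together with the integral constraint $\int_0^1 H(h)\,dh = c/3$ from $(H^2)' = 4H - 4ch^2$. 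The value $c = c^\star$ is pinned down by matching with the outer problem $f = 1 + G(y/\bar\mu)$, $G'+G+G^2=0$, whose general solution $G(z) = Ae^{-z}/(1-Ae^{-z})$ is forced to $A\to 1$ at leading order by matching to the inner asymptotics $\tilde g(\xi)\sim 1/\xi$; the subleading matching then uniquely determines $c^\star > 0$ and yields the claimed $s_0 \sim c^\star\mu^2/\beta$.

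The hard part will be the monotonicity. A direct probabilistic coupling of the two BBMs with drifts $\mu_1 < \mu_2$ via shared Brownian increments and genealogy is tempting, but the comparison is genuinely delicate: a particle absorbed in the $\mu_1$-process has no descendants, while the same particle under $\mu_2$ may survive, branch, and contribute many further absorption events, so the naive stochastic domination $\Nhits^{(\mu_2)}(\infty) \le \Nhits^{(\mu_1)}(\infty)$ does not follow directly. Hence the analytic route through the phase-plane comparison above, combined with the energy identity, appears to be the cleanest path, with the principal difficulty being the quantitative exclusion of the ``no-crossing'' scenario for $w_1,w_2$.
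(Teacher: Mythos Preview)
Your phase-plane approach to monotonicity contains a genuine error. You assert ``slow-manifold entry $w(f)\sim A(\bar\mu)(f-1)$ near $f=1^+$, where $A(\bar\mu)=\bar\mu-\sqrt{\bar\mu^2-2}$''. But the paper establishes (Proposition~\ref{prop3} and its proof, together with the series representation in Section~\ref{sub:Expansion-formulaProof}) that $1-\TW_{s_0}(x)$ decays at the \emph{fast} rate $r=\mu+\sqrt{\mu^2-2\beta}$, not the slow one. In your rescaled variables this means $w(f)\sim(\bar\mu+\sqrt{\bar\mu^2-2})(f-1)$, and this slope is \emph{increasing} in $\bar\mu$, not decreasing. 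So the starting inequality is reversed: $w_1<w_2$ near $f=1^+$, and your transversality computation then shows no crossing can occur, giving $s_0^{(1)}<s_0^{(2)}$ directly. Ironically, with the correct eigenvalue the argument becomes the easy ``no-crossing'' case rather than the hard ``must-cross'' case you were bracing for; as written, though, the proposal argues from a false premise.

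More importantly, you dismiss the probabilistic coupling too quickly. The paper's proof is a two-line argument once you spot the right parameter to vary: fix $\mu$ and compare branching rates $\beta_1>\beta_2$ (equivalently, compare $\mu/\sqrt{\beta_1}<\mu/\sqrt{\beta_2}$). One can couple the two BBMs so that the $\beta_2$-particles are a \emph{subset} of the $\beta_1$-particles, with identical trajectories and hence identical absorption times; thus $K_{\beta_2}(\infty)\le K_{\beta_1}(\infty)$ pathwise and $\TW_{s,\beta_1,\mu}\ge\TW_{s,\beta_2,\mu}$ for $s>1$. Your worry about ``a particle absorbed in the $\mu_1$-process having descendants under $\mu_2$'' simply does not arise when the drift is held fixed. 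Strictness then follows from comparing $\TW_{s_0}''(0)=-2\beta(s_0^2-s_0)$ at the common $s_0$ (if it were common) using $\TW_{s_0}'(0)=0$.

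For the asymptotic $s_0\sim c\mu^2/\beta$, your matched-asymptotics sketch is a reasonable heuristic, but the paper takes a different and more explicit route: it works with the series $\Phi(z)=\sum a_n z^n$ of Section~\ref{series}, rescales to $\Psi^{(p)}(z)=p\Phi(z/p)$ with $p=2\beta/r^2\to0$, proves a uniform bound $b_n^{(p)}\le 15\cdot 4^{-n}$ on the coefficients for small $p$ (so $\mathcal R^{(p)}\ge4$), and then shows that the first negative minimum $m^{(p)}$ of $\Psi^{(p)}$ converges to $m^{(0)}\in(-3,0)$. Since $s_0=1-\Psi^{(p)}(m^{(p)})/p$, this gives $p\,s_0\to -\Psi^{(0)}(m^{(0)})$. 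Your inner--outer matching would need substantial work to be made rigorous (uniqueness of $c^\star$, justification of the formal limit), whereas the series approach reduces everything to elementary coefficient estimates and uniform convergence.
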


%%%%%%%%%%%%%%%%%%%%%%%%%%%%%%%%%%%%%%%%%%%%%%%%%%
%%%%%%%%%%%%%%%%%%%%%%%%%%%%%%%%%%%%%%%%%%%%%%%%%%
%%%%%%%%%%%%%%%%%%%%%%%%%%%%%%%%%%%%%%%%%%%%%%%%%%

\subsection{Distribution of the all-time minimum in a branching Brownian motion}

The probability that $K(\infty)=0$ for a system started from $x$,
is also the probability that the all-time minimum of a full branching
Brownian motion with drift $\mu$ started from zero does not go below
$-x$:
\begin{equation}\label{def omega}
\TW(x) := \TW_0(x) 
	= \P^x[\Nhits(\infty)=0]
	= \P^0[\min_{t \ge 0} \min_{u \in \Nall(t)} X_u(t) >-x]. 
\end{equation}
This quantity, of course, is not trivial only in regime~C ($\mu\ge \sqrt{2\beta}$). Then, since 
$$ \lim_{t \to \infty} \min_{u \in \Nall(t)} X_u(t) =+\infty$$
almost surely, we see that there is a well defined all-time minimum for the branching Bownian motion and we conclude that $\lim_{x \to \infty} \TW(x)=1.$

It is not hard to see by standard arguments that $\TW$ must satisfy a
KPP-type differential equation with boundary conditions:
\begin{equation}\label{E:TW}
\begin{cases}0= \frac12 \TW''  + \mu \TW'+\beta(\TW^2-\TW), & x\ge 0, \\ 
\TW(0)=0, \ \ \TW(\infty) = 1. \end{cases}
\end{equation}
In fact, $\TW_s(x)$ introduced in \eqref{w_s}, if finte, is solution to the same
equation with the boundary condition $\TW(0)=0$ replaced by
$\TW_s(0)=s$:
\begin{equation}\label{E:TWs}
\begin{cases}0= \frac12 \TW_s''  + \mu \TW_s'+\beta(\TW_s^2-\TW_s), & x\ge 0, \\ 
\TW_s(0)=s, \ \ \TW_s(\infty) = 1. \end{cases}
\end{equation}

This is an example of the deep connection between branching Brownian
motion and the KPP equation which goes back to McKean \cite{mckean} who
noticed that one can represent solutions of the KPP
equation as expectations of functionals of branching Brownian motions. 

Until now this is very classical, however there is one unexpected
difficulty here: both \eqref{E:TW} and \eqref{E:TWs} admit infinitely
many solutions  and  are not sufficient to characterize $\TW(x)$.
Figure~\ref{F:solutions} gives
several numerical solutions to \eqref{E:TW}.
\begin{figure}[ht]\begin{center}
\includegraphics{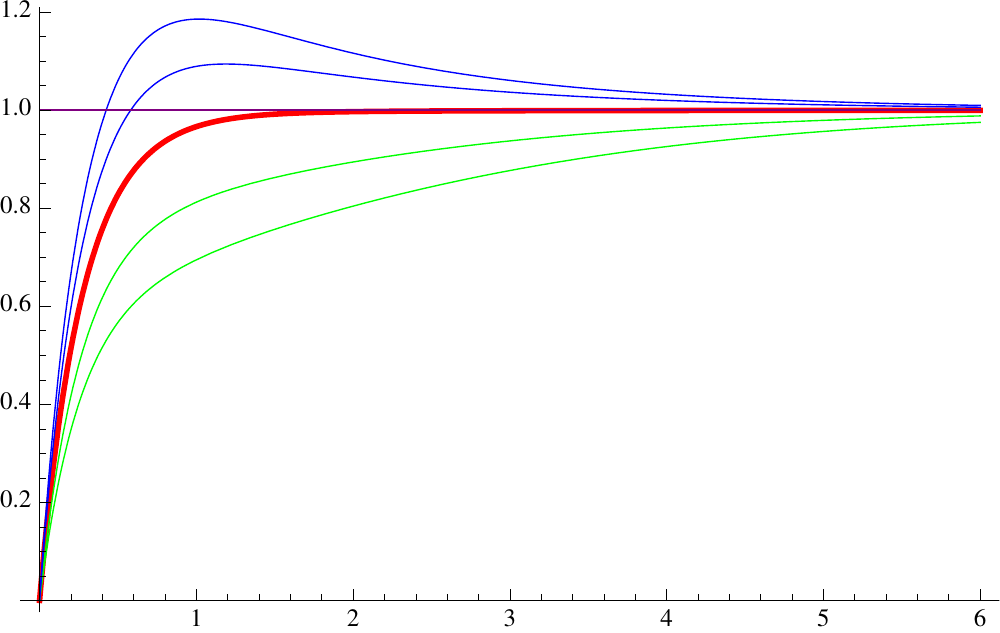}
\end{center}
\caption{Solutions to \eqref{E:TW} for $\beta=1$ and
$\mu=2$. The red (bold) curve is
$\TW$.}\label{F:solutions}
\end{figure}

In this work , we present three largely independent ways to characterize
$\TW(x)$ which are laid out in the three following subsections. The first approach
relies on partial differential equations, the second gives $\TW(x)$ as the
expectation of a certain martingale and the third one  gives
$\TW(x)$ as a power series.

One salient property of $\TW$  is that it converges to~1 rather quickly:
\begin{prop}\label{prop3}
There exists $B>0$ such that  
\begin{equation}\label{E:asymptotique}
1-\TW(x) \sim B e^{-\big(\mu +\sqrt{\mu^2-2\beta}\big) x}\qquad\text{for large
$x$}.
\end{equation}
Furthermore, $\TW(x)$ is the only solution of \eqref{E:TW} which remains in
$[0,1)$ and converges that fast to~1.

Similarly, for any $s\in[0,s_0)$, there exists $B_s \in \R$ such that 
\begin{equation}\label{E:asymptotique2}
1-\TW_s(x) \sim B_s e^{-\big(\mu +\sqrt{\mu^2-2\beta}\big) x}\qquad\text{for large
$x$}
\end{equation}
where $B_1=0,$ $B_s>0$ when $s<1$ and $B_s<0$ when $s>1$.
\end{prop}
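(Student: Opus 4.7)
The plan is to combine a phase-plane analysis of the ODE \eqref{E:TW} at infinity with a probabilistic upper bound that identifies $\TW$ among all solutions. First I would set $v = 1 - \TW$; from \eqref{E:TW}, $v$ satisfies the nonlinear equation $\tfrac12 v'' + \mu v' + \beta v = \beta v^2$ on $[0,\infty)$ with $v(0) = 1$, $v(\infty) = 0$, and $v \in (0,1]$. The linearization at $0$ has characteristic roots $\lambda_\pm = -\mu \pm \sqrt{\mu^2 - 2\beta}$; in regime~C both are real and nonpositive, and for $\mu > \sqrt{2\beta}$ they are strictly negative and distinct, with $\lambda_- < \lambda_+ < 0$, so $e^{\lambda_- x}$ is the faster-decaying mode. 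Viewing the ODE as a 2D autonomous system in the $(v,v')$-plane, the origin is a stable node; by the strong stable manifold theorem there is a unique one-dimensional $C^1$ invariant curve $W^{\mathrm{ss}}$ tangent to the $\lambda_-$-eigenvector $(1,\lambda_-)$. Any nontrivial solution with $v(\infty)=0$ therefore either lies on $W^{\mathrm{ss}}$, with $v(x)\sim B e^{\lambda_- x}$ for some $B\neq 0$, or enters $(0,0)$ tangent to the $\lambda_+$-direction, with $v(x)\sim A e^{\lambda_+ x}$ for some $A\neq 0$. This dichotomy can also be obtained by a direct Picard iteration on the Duhamel formulation built from the Green's function of the linear part and the quadratic source.

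To place $\TW$ onto the fast branch, I would then apply the many-to-one lemma to the stopping line of absorbed particles:
\[
\E^x[\Nhits(\infty)] \;=\; \E_W^x[e^{\beta \tau_0};\,\tau_0<\infty] \;=\; e^{-(\mu+\sqrt{\mu^2-2\beta})x} \;=\; e^{\lambda_- x},
\]
where $\tau_0$ is the first hitting time of $0$ by a Brownian motion with drift $\mu$ started at $x$; the identity uses the classical Laplace transform of $\tau_0$, whose exponent is real precisely because $\mu \ge \sqrt{2\beta}$. Markov's inequality then gives $1-\TW(x) = \P^x(\Nhits(\infty)\ge 1) \le e^{\lambda_- x}$, which is incompatible with the slow branch $A e^{\lambda_+ x}$ since $\lambda_+ > \lambda_-$. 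Hence $v$ lies on $W^{\mathrm{ss}}$ and $1-\TW(x)\sim B e^{\lambda_- x}$ for some $B\ge 0$; the positivity $B>0$ follows because iterating the nonlinear ODE shows that $v = o(e^{\lambda_- x})$ would force $v\equiv 0$, contradicting $v(0)=1$. For uniqueness within $[0,1)$, any two fast-decaying solutions trace out the same one-dimensional curve $W^{\mathrm{ss}}$ and are therefore translates of each other by autonomy of the ODE; the common value $v(0)=1$ together with the monotonicity of $v$ (Theorem~\ref{main thm: exp moments}) forces the shift to be zero.

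For the family $\TW_s$: when $s\in[0,1)$, Theorem~\ref{thm w_s}(1) gives $1-\TW_s(x) = 1-\TW(x+\TW^{-1}(s)) \sim B e^{\lambda_- \TW^{-1}(s)} e^{\lambda_- x}$, so $B_s = B e^{\lambda_- \TW^{-1}(s)} > 0$. At $s=1$, $\TW_1\equiv 1$ (regime~C ensures $\Nhits(\infty)<\infty$ a.s.), so $B_1=0$. For $s\in(1,s_0)$, I would first use the tail estimate \eqref{maillard stuff} together with $1-\TW(x)=\sum_{n\ge1}\P^x(\Nhits(\infty)=n)$ and the asymptotic already established in Step~2 (the summability coming from $\sum n^{-3/2}<\infty$) to obtain $-\TW_{s_0}'(x) \sim D e^{\lambda_- x}$ for some $D>0$; integrating then yields $\TW_{s_0}(x)-1 \sim (D/|\lambda_-|)e^{\lambda_- x}$, i.e.\ $B_{s_0} = -D/|\lambda_-| < 0$. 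Applying Theorem~\ref{thm w_s}(2) gives $1-\TW_s(x) \sim B_s e^{\lambda_- x}$ with $B_s = B_{s_0} e^{\lambda_- \TW_{s_0}^{-1}(s)} < 0$. The main technical obstacle is making the dichotomy of the first paragraph fully rigorous for the nonlinear ODE (constructing $W^{\mathrm{ss}}$ and justifying the two-term asymptotic expansion), especially in the degenerate case $\mu = \sqrt{2\beta}$ where the two roots coincide and ruling out a spurious polynomial factor $x e^{-\mu x}$ depends crucially on the probabilistic upper bound.
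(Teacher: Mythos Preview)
Your phase-plane dichotomy is essentially the paper's Lemma~\ref{onlyone} (which invokes Hartman's $C^1$ linearization theorem), and your selection of the fast branch for $s\le 1$ via the many-to-one identity $\E^x[\Nhits(\infty)]=e^{-rx}$ together with Markov's inequality is a clean alternative to the paper's route. The paper instead uses the maximality characterization of $\TW_s$ (Theorem~\ref{thm2}): once a single fast-decaying solution $v<1$ is known to exist from the phase-plane lemma, $\TW_s\ge v$ forces $1-\TW_s\le 1-v=O(e^{-rx})$. Your argument is more self-contained probabilistically; the paper's needs the PDE comparison result but no moment computation.

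For $s\in(1,s_0)$, however, your argument has a real gap. The tail estimate~\eqref{maillard stuff} is an asymptotic in $n$ for each \emph{fixed} $x$; summing it over $n$ and matching against $1-\TW(x)=\sum_{n\ge1}p_n(x)$ does not recover the $x$-asymptotic of $-\TW_{s_0}'(x)$ without a uniformity you have not supplied --- the sum may well be dominated by its first few terms, about which~\eqref{maillard stuff} says nothing. The paper avoids this entirely: Lemma~\ref{onlyone} also produces a unique (up to shift) fast-decaying solution $v>1$, and the \emph{minimality} of $\TW_s$ among solutions above~$1$ (Theorem~\ref{thm2}) gives $\TW_s-1\le v-1=O(e^{-rx})$, ruling out the slow branch. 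If you prefer to stay probabilistic, a workable repair is to pick $s'\in(s,s_0)$, use $p_n(x)\le\min\big(e^{-rx},\,s_0(s')^{-n}\big)$ (the first from your Markov bound, the second from $\TW_{s'}(x)\le s_0$), and split $\sum_n s^n p_n(x)$ at $N\approx rx/\log s'$; for $s$ close enough to~$1$ this yields $\TW_s(x)-1=o(e^{-Rx})$, which already excludes the slow branch, and then Theorem~\ref{thm w_s}(2) transports the conclusion to all $s\in(1,s_0)$.
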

This fast decay was actually an unexpected result for reasons explained in
Section~\ref{KPPasymp}.

To simplify notation, we call $r$ the exponential decay rate of $1-\TW(x)$
as given in \eqref{E:asymptotique}:
\begin{equation}\label{eq:r}
r:=\mu+\sqrt{\mu^2-2\beta}.
\end{equation}
It is the largest solution of $\frac12r^2-\mu r+\beta=0$.

\subsubsection{$\TW(x)$ from partial differential equations}

A first way to characterize $\TW(x)$ is to track the probability that no
particle got absorbed up to time~$t$.
Define
\begin{equation}\label{eq:u}
\pr(t,x) := \P^x[\Nhits(t)=0].
\end{equation}
The function $\pr : \R_+^2 \mapsto [0,1]$ is increasing in $x$ and
decreasing in $t$ and, clearly, for each $x$, $\pr(t,x)\to\TW(x)$ as
$t\to\infty$. Furthermore, $\pr$ satisfies the KPP equation with boundary conditions 
\begin{equation}
\label{KPP}
 \begin{cases}  \partial_t\pr = \frac12 \partial_{xx}\pr  + \mu \partial_x\pr +\beta(\pr^2-\pr), \\ \pr(t,0)=0 \ (\forall t \ge 0) ,\quad \pr(0,x) = 1  \ (\forall x>0), \end{cases}
\end{equation}
which, by Cauchy's Theorem has only one solution. 
%PM: !!I do not know this stuff well, but isn't the Cauchy-Kovalevskaya theorem states that there exists a unique solution which is analytic, not excluding the possibility of other non-analytic solutions?

Therefore, to obtain $\TW(x)$, one can  in principle solve \eqref{KPP} and
take the large time limit. This route leads to the following Theorem:
\begin{thm}\label{thm2}
In regime~C ($\mu \ge \sqrt{2\beta}$), the function $\TW(x)$ defined by
\eqref{def omega} is the maximal solution of~\eqref{E:TW} such that
$\TW(x) <1$ for all $x\ge 0$.

More generally, $\omega_s(x)$ for $s<1$ is the maximal solution of
\eqref{E:TWs} that stays below 1 and $\omega_s(x)$ for $s\in(1,s_0]$ is
the minimal solution of \eqref{E:TWs} that stays above~1.
\end{thm}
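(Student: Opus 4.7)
The plan is to derive the maximality (resp.\ minimality) of $\omega_s$ from the uniquely-solvable parabolic Cauchy problem \eqref{KPP} together with a parabolic comparison argument. For each $s\in[0,s_0]$, set
\[
\pr_s(t,x) := \E^x\bigl[s^{\Nhits(t)}\bigr],
\]
which extends the quantity $\pr=\pr_0$ of \eqref{eq:u}. A standard McKean-type computation (branching Markov property plus Itô's formula) shows that $\pr_s$ is the unique solution of the parabolic equation \eqref{KPP} with boundary value $\pr_s(t,0)=s$ and initial value $\pr_s(0,x)=1$ for $x>0$. Since $\Nhits(t)$ is monotone non-decreasing in $t$, the random variable $s^{\Nhits(t)}$ is monotone in $t$ (non-increasing for $s<1$, non-decreasing for $s>1$); combined with the finiteness of $\omega_s$ on $[0,s_0]$ supplied by Theorem~\ref{main thm: exp moments}, monotone/dominated convergence yields $\pr_s(t,x)\to\omega_s(x)$ pointwise as $t\to\infty$.

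Let now $\tilde\omega_s$ be an arbitrary solution of the elliptic equation \eqref{E:TWs} satisfying the one-sided constraint of the theorem: $\tilde\omega_s<1$ when $s<1$, or $\tilde\omega_s>1$ when $s\in(1,s_0]$. Viewing $\tilde\omega_s$ as a stationary solution of \eqref{KPP}, it shares with $\pr_s$ the same Dirichlet datum $s$ at $x=0$, while at $t=0$ it sits strictly below (respectively strictly above) the constant $\pr_s(0,\cdot)\equiv 1$. The parabolic comparison principle then delivers $\tilde\omega_s(x)\le \pr_s(t,x)$ for every $t\ge 0$ and $x\ge 0$ in the case $s<1$, with the reverse inequality in the case $s>1$. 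Letting $t\to\infty$ produces $\tilde\omega_s\le \omega_s$ (resp.\ $\ge$), which is exactly the maximality (resp.\ minimality) asserted.

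The main technical obstacle is the justification of the parabolic comparison on the unbounded half-line for this semilinear equation. Setting $w:=\pr_s-\tilde\omega_s$, one checks by a direct algebraic computation that $w$ satisfies the \emph{linear} equation
\[
\partial_t w = \tfrac12\partial_{xx} w + \mu\,\partial_x w + c(t,x)\,w,\qquad c(t,x):=\beta\bigl(\pr_s(t,x)+\tilde\omega_s(x)-1\bigr),
\]
with $w(t,0)=0$ and $w(0,\cdot)$ of a definite sign. When $s<1$, both $\pr_s$ and $\tilde\omega_s$ are bounded above by $1$, so $c$ is bounded above; after the substitution $w\mapsto e^{-Ct}w$ with $C$ large the zeroth-order coefficient becomes non-positive, and the classical maximum principle for bounded solutions of a linear parabolic equation on $[0,\infty)$ yields $w\ge 0$. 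When $s\in(1,s_0]$, one has $1\le \pr_s\le \omega_s$ (by monotonicity in $t$ and the identified limit), so $\pr_s$ is uniformly bounded, and a possibly unbounded $\tilde\omega_s$ is handled by localization on $[0,R]$: since $\omega_s(R)\to 1$ as $R\to\infty$ while $\tilde\omega_s(R)>1$, the desired inequality holds at the artificial boundary $x=R$ for $R$ large, and one sends $R\to\infty$ to conclude.
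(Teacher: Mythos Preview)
Your approach is essentially identical to the paper's: compare an arbitrary admissible stationary solution to the time-dependent solution $\pr_s(t,x)=\E^x[s^{\Nhits(t)}]$ via the parabolic maximum principle, then pass to the limit $t\to\infty$. The paper's proof is a one-paragraph invocation of the maximum principle with no discussion of the unbounded-domain technicalities, so your added detail is welcome.

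There is, however, a gap in your handling of the case $s\in(1,s_0]$. You write that ``a possibly unbounded $\tilde\omega_s$ is handled by localization on $[0,R]$: since $\omega_s(R)\to1$ while $\tilde\omega_s(R)>1$, the desired inequality holds at $x=R$ for $R$ large''. This deduction does not follow: knowing only that $\pr_s(t,R)\le\omega_s(R)$ with $\omega_s(R)\to1^+$ and that $\tilde\omega_s(R)>1$ does \emph{not} yield $\pr_s(t,R)\le\tilde\omega_s(R)$, because $\tilde\omega_s(R)$ may approach $1$ from above faster than $\omega_s(R)$ does (indeed, by Lemma~\ref{onlyone} this is exactly what happens for the non-extremal solutions). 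So the boundary inequality at $x=R$ is unjustified, and in fact would be equivalent to what you are trying to prove.

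The fix is simpler than the localization you attempt: a solution $\tilde\omega_s$ of \eqref{E:TWs} is by definition continuous with $\tilde\omega_s(0)=s$ and $\tilde\omega_s(\infty)=1$, hence bounded. Then $c(t,x)=\beta(\pr_s+\tilde\omega_s-1)$ is bounded above (since $\pr_s\le\omega_s\le s_0$ and $\tilde\omega_s$ is bounded), and the same argument you gave for $s<1$---substitute $w\mapsto e^{-Ct}w$ and apply the maximum principle for bounded solutions of a linear parabolic equation on the half-line---goes through verbatim. No localization is needed.
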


\subsubsection{Martingale representation for $\TW$}
\label{S: mg representation}
There is an explicit probabilistic representation of the maximum standing
wave $\TW$ in regime~C ($\mu \ge \sqrt{2\beta}$). Recall that $\Nall(t)$
is the population of all the particles in  the branching Brownian motion
with no absorption and  $\Nabs(t)$ is the population of particles alive at
time $t$ when we kill at 0. We now define on the same probability space
a third process based on the branching Brownian motion in which particles
that hit 0 are stopped but not removed from the system (they neither move
nor branch). We
denote by  $\Nstop(t)$ the set of particles alive at time $t$ in this
model. With a slight abuse of notations we continue to write $X_u(t)$ for
the positions of particles when $u \in \Nstop(t).$ 

Let us define the following two processes:
\begin{equation}\label{def of Z}
\Zstop(t) := \sum_{u \in  \Nstop(t)} e^{-r X_u(t)} ,
 \qquad \Zall(t) := \sum_{u \in  \Nall(t)}e^{-r X_u(t)},
\end{equation}
where $r$ is the asymptotic decay of $\TW(x)$ as given in~\eqref{eq:r}.
Rewriting $X_u(t) = Y_u(t) +\mu t$ it is clear that  $\big\{Y_u(t), u \in
\Nall(t)\big\}$ is simply a standard branching Brownian motion with no drift.
Therefore  $\Zall$ is the usual exponential
martingale  with parameter $r$ associated with the branching Brownian
motion $Y.$ The process $\Zstop$ is the martingale $\Zall$ stopped on the
line $t\wedge T_0$ (i.e.\@ particles are stopped at time $t$ or when they hit
0 for the first time). It is therefore also a martingale. 

\begin{lem}\label{L:conv of Z}
In regime~C ($\mu  \ge \sqrt{2\beta}$) the martingale $(\Zstop(t), t\ge 0)$
converges almost surely and in $L^1$ to $\Nhits(\infty)$ and therefore 
 $\E^x[\Nhits(\infty)] =\Zstop(0)=e^{-rx}.$
\end{lem}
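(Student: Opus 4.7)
The plan is threefold: first obtain almost-sure convergence of $\Zstop$ from non-negative martingale convergence, then identify the limit as $\Nhits(\infty)$ by controlling the living-particle contribution through a coupling with the unkilled process, and finally upgrade to $L^{1}$-convergence by matching expectations via a many-to-one computation.

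Since $\Zstop$ is a non-negative martingale (as noted just before the lemma), it converges almost surely to some $L\in[0,\infty)$. Splitting $\Nstop(t)$ into the $\Nhits(t)$ particles frozen at $0$ (each contributing $e^{-r\cdot 0}=1$) and the still-alive particles (which coincide with $\Nabs(t)$ and sit above $0$), I would write
\[
\Zstop(t) = \Nhits(t) + R(t), \qquad R(t):=\sum_{u\in\Nabs(t)} e^{-r X_u(t)} \ge 0.
\]
In particular $\Zstop(t)\ge \Nhits(t)$, so $L\ge \Nhits(\infty)$ almost surely and $\Nhits(\infty)\in L^{1}$ with $\E^x\Nhits(\infty)\le e^{-rx}$.

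The key step is to prove $R(t)\to 0$ almost surely. I would couple $\Nabs$ and $\Nall$ on the same probability space via the same driving Brownian motions and branching times, so that $\Nabs(t)$ is the sub-population of $\Nall(t)$ whose ancestral line has stayed positive; then $R(t)\le \Zall(t)$ pointwise. Writing $X_u(t)=Y_u(t)+\mu t$ and using $\tfrac12 r^{2}-\mu r+\beta=0$, one recognises $\Zall(t)$ as the additive BBM-martingale of the driftless branching Brownian motion $Y$ at parameter $-r$. Because $r\ge\sqrt{2\beta}$ throughout regime~C, this parameter is critical or supercritical, and the classical Neveu--Biggins--Kyprianou dichotomy gives $\Zall(t)\to 0$ almost surely. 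Hence $R(t)\to 0$ and $L=\Nhits(\infty)$ almost surely.

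It remains to compute $\E^x\Nhits(\infty)$. By the many-to-one identity on the first-passage stopping line,
\[
\E^x\Nhits(t) = \E^x_{B}\bigl[e^{\beta\tau}\indic{\tau\le t}\bigr],
\]
where $B$ is Brownian motion with drift $\mu$ started at $x$ and $\tau$ its hitting time of $0$. The standard Laplace transform $\E^x_B[e^{-\lambda\tau};\tau<\infty]=e^{-x(\mu+\sqrt{\mu^{2}+2\lambda})}$ specialises at $\lambda=-\beta$ (well-defined precisely when $\mu^{2}\ge 2\beta$) to $e^{-rx}$. Monotone convergence then yields $\E^x\Nhits(\infty)=e^{-rx}=\Zstop(0)$. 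Combined with $\E\Zstop(t)\equiv e^{-rx}=\E\Nhits(\infty)$ and $\Zstop(t)\to\Nhits(\infty)$ almost surely, Scheff\'e's lemma delivers the $L^{1}$-convergence. The main obstacle is the second step, showing $R(t)\to 0$ almost surely: the coupling with $\Nall$ reduces it to the degeneracy of the additive BBM-martingale $W_{-r}$ at and beyond criticality, which is exactly the regime picked out by $\mu\ge\sqrt{2\beta}$; everything else is routine once that vanishing is in hand.
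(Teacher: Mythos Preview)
Your proof is correct. The almost-sure identification of the limit is exactly what the paper does: both of you sandwich $\Zstop(t)$ between $\Nhits(t)$ and $\Nhits(t)+\Zall(t)$ and invoke the degeneracy of the additive martingale $\Zall$ at parameter $r\ge\sqrt{2\beta}$.

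The difference is in the $L^{1}$ step. The paper proceeds via the spine change of measure: it shows $\Q^{x}(\Zstop<\infty)=1$ by observing that under $\Q^{x}$ the spine reaches $0$ in finite time, so only finitely many subtrees are thrown off, each contributing a $\P$-a.s.\ finite $\Nhits_u(\infty)$. This is the standard ``$\Q$-finiteness implies $\P$-uniform integrability'' argument. You instead compute $\E^{x}[\Nhits(\infty)]$ directly by the stopping-line many-to-one and the explicit Laplace transform of the first-passage time, and then close with Scheff\'e once $\E^{x}[\Zstop(t)]=e^{-rx}=\E^{x}[\Nhits(\infty)]$. Your route is more elementary and entirely self-contained for this lemma; it also makes transparent why the condition $\mu^{2}\ge 2\beta$ is exactly what is needed for $\E[e^{\beta\tau};\tau<\infty]$ to be finite. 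The paper's route, on the other hand, has the advantage of exercising the $\Q^{x}$-description that is needed anyway for the subsequent martingale representation of $1-\TW(x)$, so no work is wasted. Both arguments are valid; yours is the quicker way to the lemma in isolation.
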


Following the probability tilting method pioneered by \cite{lpp} and
\cite{chauvrou} we introduce a new probability measure $\Q^x $ 
$$
\frac{\diffd \Q^x }{ \diffd \P^x}  =e^{rx} \Nhits(\infty).
$$
Note that since $\Zstop$ is a closed martingale we have that $\E^x[\Nhits(\infty) |\cF_t] =\Zstop(t).$ Thus
$$
\left.\frac{\diffd \Q^x }{ \diffd\P^x} \right|_{\cF_t} =\frac{\Zstop(t) }{
\Zstop(0)}.
$$
Under this tilted probability measure, the law of the process is
the same as the original $\P^x$ law except for the movement and branching
rate of a distinguished particle (the {\it spine} particle $\xi$). The
spine moves according to a Brownian motion with drift
$-\sqrt{\mu^2-2\beta}$, branches at an accelerated rate of $2\beta$ and
stops (i.e.\@ sticks and stops reproducing) upon hitting 0.

\begin{thm} \label{Thm: martingale representation}
In regime~C ($\mu \ge \sqrt{2\beta}$),
$$1-\TW(x) =\Q^x\bigg( \frac1{\Nhits(\infty)} \bigg) e^{-rx}.$$
Furthermore, $\Q^x\big( \frac1{\Nhits(\infty)} \big)$ converges to
a finite constant $B>0$ when $x
\to \infty$ and thus, as $x\to \infty$,
$$
1-\TW(x) \sim Be^{-rx}.
$$
More generally, for any $s\in[0,s_0]$, one has
$$
1-\TW_s(x) = \Q^x\bigg(
\frac{1-s^{\Nhits(\infty)}}{\Nhits(\infty)} \bigg)e^{- rx}
$$
and the expectation $\Q^x(\cdot)$ converges to a finite positive constant as
$x\to\infty$.

\end{thm}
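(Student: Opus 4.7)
The argument has two parts: first establishing the identity by a direct change of measure, then proving convergence of the resulting $\Q^x$-expectation via a spine coupling as $x \to \infty$.

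\textbf{The identity.} Under $\Q^x$ the spine is a Brownian motion with negative drift $-\sqrt{\mu^2-2\beta}$ stopped at $0$, hence $\Q^x(\Nhits(\infty) \ge 1) = 1$, and on this event $\diffd\P^x/\diffd\Q^x = e^{-rx}/\Nhits(\infty)$ by Lemma~\ref{L:conv of Z}. Since $1 - s^{\Nhits(\infty)}$ vanishes on $\{\Nhits(\infty) = 0\}$, a direct change of variables yields
\begin{equation*}
1 - \TW_s(x) \;=\; \E^x\sbr{(1 - s^{\Nhits(\infty)}) \indic{\Nhits(\infty) \ge 1}} \;=\; e^{-rx}\, \Q^x\sbr{\frac{1 - s^{\Nhits(\infty)}}{\Nhits(\infty)}}.
\end{equation*}
For $s \in (1, s_0]$, integrability of the ratio under $\Q^x$ follows from Theorem~\ref{main thm: exp moments}: indeed $\Q^x\sbr{s^{\Nhits(\infty)}/\Nhits(\infty)} = e^{rx}(\TW_s(x) - \TW(x))$ is finite because $\TW_s(x) < \infty$ there.

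\textbf{Convergence via spine coupling.} Under $\Q^x$, the spine decomposition reads $\Nhits(\infty) = 1 + \sum_j K_j$, summed over off-spine branchings at times $\sigma_j$, where each $K_j$ is, conditionally on the spine, distributed as $\Nhits(\infty)$ under $\P$ started at the branching position $\xi_{\sigma_j}$. Reversing time at the spine's hitting time $\tau_x$ of $0$, the reversed spine $\hat\xi(t) := \xi(\tau_x - t)$ is a Brownian motion with drift $+\sqrt{\mu^2-2\beta}$ starting at $0$ and killed upon reaching $x$. These reversed processes couple naturally across different $x$ to an eternal Brownian motion $\hat\xi^\infty$ with that positive drift, producing an increasing coupling $\Nhits(\infty)^x \uparrow \Nhits(\infty)^\infty$ almost surely, where $\Nhits(\infty)^x$ has the $\Q^x$-law of $\Nhits(\infty)$. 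A Campbell-type computation, using $\E[K_j \mid \hat\xi(\sigma_j)] = e^{-r \hat\xi(\sigma_j)}$ together with the linear escape of $\hat\xi^\infty$ to infinity, gives $\E[\Nhits(\infty)^\infty] < \infty$, hence $\Nhits(\infty)^\infty < \infty$ almost surely. For $s \in [0, 1]$ the integrand $(1 - s^{\Nhits(\infty)})/\Nhits(\infty) \in [0, 1]$ is bounded and continuous in $\Nhits(\infty)$, so bounded convergence gives $\Q^x\sbr{(1-s^{\Nhits(\infty)})/\Nhits(\infty)} \to \Q^\infty\sbr{(1-s^{\Nhits(\infty)^\infty})/\Nhits(\infty)^\infty}$, strictly positive for $s \in [0, 1)$ since $\Nhits(\infty)^\infty < \infty$ a.s.; this identifies $B = \Q^\infty\sbr{1/\Nhits(\infty)^\infty}$.

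\textbf{Main obstacle.} The regime $s \in (1, s_0]$ is the delicate part: the integrand $(1-s^{\Nhits(\infty)})/\Nhits(\infty)$ is unbounded and the monotone coupling no longer yields the conclusion directly. I would split $\Q^x\sbr{(1-s^{\Nhits(\infty)})/\Nhits(\infty)} = \Q^x\sbr{1/\Nhits(\infty)} - \Q^x\sbr{s^{\Nhits(\infty)}/\Nhits(\infty)}$ and treat each piece separately. The first converges by the previous step. For $\Q^x\sbr{s^{\Nhits(\infty)}/\Nhits(\infty)} = e^{rx}(\TW_s(x) - \TW(x))$, uniform integrability under $\Q^x$ is the key issue; the tail estimate~\eqref{maillard stuff} from Theorem~\ref{main thm: exp moments} should provide the required control of contributions from large $\Nhits(\infty)$, with the critical case $s = s_0$ relying additionally on $\TWs'(0) = 0$ for the extra cancellation. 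Combining the two pieces produces the claimed limit $B_s$ of Proposition~\ref{prop3}.
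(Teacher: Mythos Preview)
Your identity argument is essentially the paper's Lemma~18, and the spine-coupling idea for convergence matches the paper's strategy. Two points need correction, however.

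First, your description of the reversed spine is not quite right. The time-reversal of a Brownian motion with drift $-\sqrt{\mu^2-2\beta}$ started at $x$ and stopped at $0$ is a Brownian motion with drift $+\sqrt{\mu^2-2\beta}$ started at $0$ \emph{conditioned never to hit $0$}, run until its \emph{last} passage of level $x$ (not the first hitting time). The conditioning matters: in the critical case $\mu=\sqrt{2\beta}$ the drift vanishes and an unconditioned Brownian motion from $0$ does not escape, so your ``linear escape'' justification for $\E[\Nhits(\infty)^\infty]<\infty$ breaks down there. The paper instead proves $\Nhits(\infty)^\infty<\infty$ almost surely via Borel--Cantelli, using that $\Q^\infty(\tilde K_i>0)\le 1-\TW(Y(t_i))\le e^{-rY(t_i)}$ is summable once the conditioned process (which grows like $\sqrt{t}$ in the critical case) is used.

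Second, your treatment of $s\in(1,s_0]$ is where the real gap lies. Appealing to the tail estimate~\eqref{maillard stuff} and uniform integrability is vague and circular (that estimate is proved from the very objects you are analysing). The paper's device is much simpler: the map $t\mapsto s^t/t$ is decreasing on $[1,1/\log s]$ and increasing on $[1/\log s,\infty)$. Splitting the expectation accordingly, the first piece is bounded by $s$ and the coupling $\Nhits^x(\infty)\uparrow \Nhits^\infty(\infty)$ lets dominated convergence apply, while on the second piece monotone convergence applies directly. This handles all $s\in(1,s_0]$ at once, with no tail input needed. Alternatively---and this is worth noting---the paper observes that convergence of $\Q^x\big[(1-s^{\Nhits(\infty)})/\Nhits(\infty)\big]$ follows immediately from the analytic Proposition~\ref{prop3}, since it equals $e^{rx}(1-\TW_s(x))\to B_s$; the probabilistic coupling then serves to \emph{identify} the limit as a $\Q^\infty$-expectation rather than to establish convergence.
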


We will see in the proof that we can give an explicit representation of the constant $B$ which appears in Proposition \ref{prop3} as the expectation of $\Nhits(\infty)^{-1}$ under the measure $\Q^\infty$ (similar to $\Q^x$ but with the spine particle ``started at infinity''). 

\subsubsection{$\TW(x)$ as a series expansion}
\label{series}

The function $\TW(x)$ can be understood in terms of series expansion.
Let $\{a_{n}\}_{n\geq1}$ be the sequence defined by \begin{equation}
a_1=1,\qquad 
a_{n}=\frac{\beta}{\frac{1}{2}n^{2}r^{2}-n\mu r+\beta}\sum_{j=1}^{n-1}a_{j}a_{n-j}
=\frac1{(n-1)\big(\frac{r^2}{2\beta}n-1\big)}
\sum_{j=1}^{n-1}a_{j}a_{n-j}
,\quad n\geq2,\label{eq:expansionCoefficients}
\end{equation}
(recall that $r$ was defined in \eqref{eq:r}, that $\frac12r^2-\mu r+\beta=0$ and that
$r\ge\mu\ge\sqrt{2\beta}$)
and $\Phi$ the function defined by the series
\begin{equation}\label{Phi}
\Phi(z)=\sum_{n\ge1}a_n z^n.
\end{equation}
We have
\begin{prop}
\label{thm:solutionExpansion}
The radius of convergence $\mathcal{R}$ of $\Phi$ is non-zero and there
exists $B \in(0,\mathcal R)$ such that
$$\omega(x)=1- \Phi(B e^{-rx}).$$
More generally, for any $0\le s\le s_0$, there exists a number $B_s$ such
that
\begin{equation}\label{omegaphi}
\omega_s(x)=1- \Phi(B_s e^{-rx})\qquad\text{for all $x\ge0$ such that
$|B_s|e^{-rx}<\mathcal R$}.
\end{equation}
$s\mapsto B_s$ is decreasing, positive for $s<1$, zero for
$s=1$, and negative for $s>1$. In particular, for $s\le1$, the condition
$|B_s|e^{-rx}<\mathcal R$ is automatically fulfilled.
\end{prop}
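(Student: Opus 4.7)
The plan is to verify directly that $\Phi(Be^{-rx})$ solves the ODE \eqref{E:TWs} for an appropriately chosen scalar $B=B_s$, then identify this series solution with $1-\omega_s$ by invoking the fast-decay uniqueness already supplied by Proposition~\ref{prop3}.

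First I would set $v(x):=1-\omega_s(x)$ so that \eqref{E:TWs} becomes
\[
\tfrac{1}{2}v'' + \mu v' + \beta v = \beta v^{2}, \qquad v(0)=1-s, \qquad v(+\infty)=0.
\]
Substituting the formal ansatz $v(x)=\Phi(Be^{-rx})=\sum_{n\ge1}a_n(Be^{-rx})^n$ and using $\tfrac12 r^2-\mu r+\beta=0$, a direct computation shows that the coefficient of $(Be^{-rx})^n$ on both sides balances iff the $a_n$'s obey \eqref{eq:expansionCoefficients}; the $n=1$ identity is automatic because $r$ is a root of the characteristic polynomial, which leaves $a_1$ as a free scale that I fix at $1$ (absorbing the normalisation into $B$). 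Because $r$ is the \emph{larger} root of $\tfrac12 X^2-\mu X+\beta$, the bracket $\tfrac12 n^2r^2-n\mu r+\beta=\beta(n-1)\bigl(\tfrac{r^2}{2\beta}n-1\bigr)$ is strictly positive for every $n\ge 2$, so the recursion defines strictly positive $a_n$ by induction. Positivity of $\mathcal R$ then follows by a standard majorant argument: since $(n-1)\bigl(\tfrac{r^2}{2\beta}n-1\bigr)\ge c\,n$ for some $c>0$ and all $n\ge2$, the $a_n$'s are dominated coefficient-wise by the sequence $b_n$ defined by $b_1=1$, $b_n=c^{-1}\sum_{j=1}^{n-1}b_jb_{n-j}$, whose generating function solves the quadratic $B(y)=y+c^{-1}B(y)^2$ and therefore has explicit positive radius of convergence. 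Hence $\Phi$ is genuinely analytic on $\{|z|<\mathcal R\}$.

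With $\Phi$ analytic, for any $B$ in the disk of convergence the function $V(x):=\Phi(Be^{-rx})$ is $C^2$ on $\{x:|B|e^{-rx}<\mathcal R\}$ and satisfies the ODE \eqref{E:TWs} together with the asymptotic $V(x)=Be^{-rx}+O(e^{-2rx})$ at infinity. By Proposition~\ref{prop3}, for each $s\in[0,s_0)$ the function $1-\omega_s$ is the unique solution of \eqref{E:TWs} (in the relevant class) with asymptotic $\sim B_s e^{-rx}$ for a specific constant $B_s$. Setting $B:=B_s$, the two solutions have matching boundary data at infinity, so $V\equiv 1-\omega_s$ on a neighbourhood of $+\infty$, and both solve the same second-order ODE, so the identity persists throughout the range of $x$ for which $|B_s|e^{-rx}<\mathcal R$.

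The sign and monotonicity of $s\mapsto B_s$ are inherited from Proposition~\ref{prop3}: $B_s>0$ for $s<1$, $B_1=0$ (corresponding to the trivial solution $\omega_1\equiv 1$), and $B_s<0$ for $s>1$; strict monotonicity follows from strict monotonicity of $s\mapsto\omega_s(x)$ at any fixed $x$ together with $\Phi'(0)=a_1=1>0$. For $s\le1$, evaluating the formula at $x=0$ gives $\Phi(B_s)=1-s\in[0,1]$; since $\Phi$ is increasing on $[0,\mathcal R)$ with $\Phi(B_0)=1$, one has $0\le B_s\le B_0<\mathcal R$, so the domain condition is automatic for all $x\ge0$. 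The endpoint $s=s_0$ is handled by passing to the limit $s\uparrow s_0$, with $B_{s_0}$ corresponding to $\omega_{s_0}'(0)=0$, i.e.\ to the (negative) critical point of $\Phi$. The main obstacle throughout is the identification in the previous paragraph: a second-order ODE leaves two parameters free, so one must rule out any admixture of the slower-decaying eigenmode $e^{-r_-x}$ (with $r_-=\mu-\sqrt{\mu^2-2\beta}<r$) in $1-\omega_s$, and it is precisely this rigidity that Proposition~\ref{prop3} supplies and that the formal power-series manipulation alone cannot.
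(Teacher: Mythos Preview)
Your approach is essentially the same as the paper's: verify that $1-\Phi(Be^{-rx})$ solves the travelling-wave ODE, then use the fast-decay uniqueness coming from Proposition~\ref{prop3} to identify it with $\omega_s$. The paper's bound on $\mathcal R$ is slightly simpler than your majorant argument (a direct induction gives $0<a_n\le 1$, hence $\mathcal R\ge 1$), but yours is fine in spirit; note only that having written $(n-1)\bigl(\tfrac{r^2}{2\beta}n-1\bigr)\ge cn$ you should have dominated by $b_n=(cn)^{-1}\sum b_jb_{n-j}$ rather than $c^{-1}\sum b_jb_{n-j}$, though the weaker constant lower bound also works and leads to the Catalan majorant you wrote.

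There is however one genuine gap. You define $B_s$ as the constant in Proposition~\ref{prop3}, identify $1-\omega_s$ with $\Phi(B_s e^{-rx})$ on a neighbourhood of $+\infty$, and then write ``evaluating the formula at $x=0$ gives $\Phi(B_s)=1-s$'' to conclude $B_s\le B_0<\mathcal R$. But evaluating at $x=0$ already presupposes $B_s<\mathcal R$, so the argument is circular; nothing you have said rules out $B_0\ge\mathcal R$ (a power series with positive coefficients need not blow up at its radius). The paper avoids this by reversing the order: it first observes that $\Phi(z)\ge a_1z+a_2z^2$ on $[0,\mathcal R)$, so $\Phi$ exceeds $1$ somewhere in $(0,1)\subset(0,\mathcal R)$, and by the intermediate value theorem picks $B_0\in(0,\mathcal R)$ with $\Phi(B_0)=1$. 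Only then does it form $v(x)=1-\Phi(B_0e^{-rx})$, which is now well defined on all of $[0,\infty)$, and identifies it with $\omega$ via Proposition~\ref{prop3}. Alternatively, you could close your own argument by invoking Pringsheim's theorem: since the $a_n$ are positive, $z=\mathcal R$ is a singularity of $\Phi$, whereas if $B_0\ge\mathcal R$ the identity $\Phi(z)=1-\omega\bigl(-r^{-1}\log(z/B_0)\bigr)$ (valid for $z$ just below $\mathcal R$) would provide an analytic continuation of $\Phi$ through $\mathcal R$ along the positive axis, a contradiction.

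A minor citation point: Proposition~\ref{prop3} states the fast-decay \emph{uniqueness} only for $s=0$; for general $s$ you are really using the Hartman linearisation lemma (the paper's Lemma~\ref{onlyone}), which gives uniqueness up to translation of the solution decaying like $e^{-rx}$.
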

The proof is contained in Section \ref{sub:Expansion-formulaProof}.
Numerically, it seems that $\mathcal R$ is large enough that
$|B_s|e^{-rx}<\mathcal R$ for all $s\in[0,s_0]$ and all $x\ge0$, but we
haven't proved that point.
The representation~\eqref{omegaphi} makes it very easy to compute
numerically $\TW_s$ by first computing $\Phi(z)$, see
Figure~\ref{figPhi}; the value $s_0$ is then obtained as 1 minus the
first minimum of $\Phi$ for negative  arguments. This follows easily from the facts that $w'_{s_0}=0, w''_{s_0}<0$ and that $w'_s(0)<0$ for all $s\in (1,s_0)$.  

\begin{figure}[ht]
\centering
\includegraphics[width=.45\textwidth]{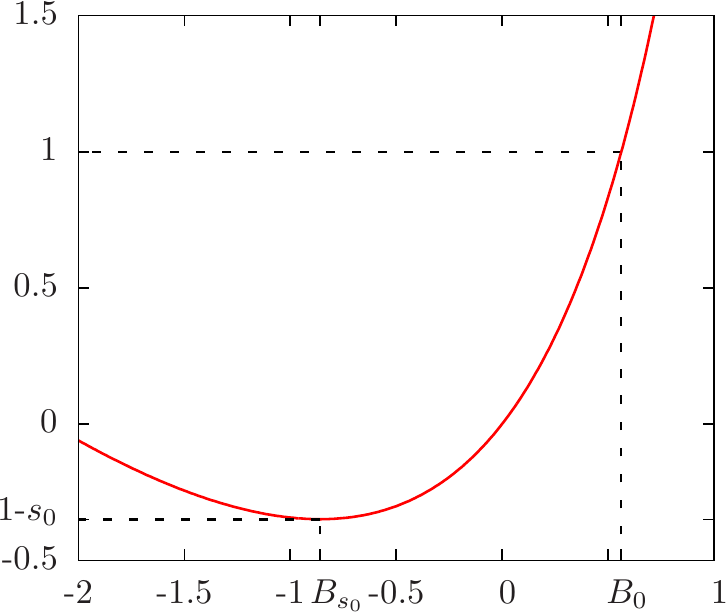},\ 
\includegraphics[width=.45\textwidth]{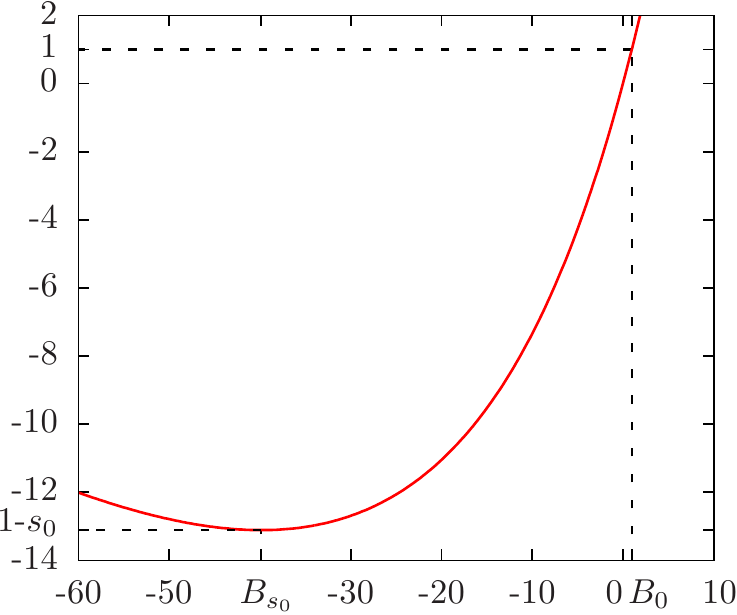}
\caption{The function $z\mapsto\Phi(z)$ for $\beta=1$ and
$\mu=\sqrt{2}$ (left) or $\mu=3$ (right). Numerically, one has
$s_0\approx 1.35$, $B_{s_0}\approx-0.859$, $B_0\approx0.564$,
$\mathcal R\approx 3.14$ for
$\mu=\sqrt2$ and
$s_0\approx14.11$, $B_{s_0}\approx-39.86$, $B_0\approx0.969$, $\mathcal
R\approx72.8$ for $\mu=3$.}
\label{figPhi}
\end{figure}

\subsubsection{About the asymptotic decay \eqref{E:asymptotique}}
\label{KPPasymp}
%%%%%%%%%%%%%%%%%%%%%%%%%%%%%%%%%%%%%%%%%%%%%%%%%%

The KPP partial differential equation, 
\begin{equation}
\partial_t h = \frac12 \partial_{xx} h  + \beta(h^2-h),
\label{reallyKPP1}
\end{equation}
where $0\le h(t,x)\le 1$, $h(t,-\infty)=0$ and $h(t,+\infty)=1$,
describes how a stable phase ($h=0$ on the left) invades an unstable phase
($h=1$ on the right). It is well known that it admits travelling wave
solutions of the form 
$$
h(t,x)=h_\mu(x-\mu t),\qquad 0<h_\mu<1,\qquad
h_\mu(+\infty)=1, \qquad h_\mu(-\infty)=0,
$$
for any velocity $\mu$ greater or equal to $\sqrt{2\beta}$. The travelling
wave $x\mapsto h_\mu(x)$ is then solution to
\begin{equation}
\frac12h_\mu''+\mu h_\mu' +\beta(h_\mu^2-h_\mu)=0
,\quad
h_\mu(-\infty)=0
,\quad
h_\mu(+\infty)=1
.
\label{defhmu}
\end{equation}
The solution to \eqref{defhmu} is unique up to translation.

Equation~\eqref{defhmu} for $h_\mu$ is very similar to equation
\eqref{E:TW} for $\TW(x)$, but $h_\mu$, surprisingly, does not have the
same asymptotic behaviour as $\TW$ for large~$x$, in the region where
$h_\mu$ and $\TW$ are close to~1. Indeed,
linearising \eqref{defhmu} around~1, one gets
\begin{equation}\label{linearized}
\frac12(1-\tilde h_\mu)''+\mu (1-\tilde h_\mu)' +\beta(1-\tilde h_\mu)=0,
\quad\text{[linearized]}
\end{equation}
(a term of order $(1-\tilde h_\mu)^2$ has been neglected) and the general solution
to \eqref{linearized} is, for some constants $A$ and $B$,
\begin{equation}
\label{sollinearized}
1-\tilde h_\mu(x)=
\begin{cases}
 A e^{-\big(\mu -\sqrt{\mu^2-2\beta}\big) x}
+B e^{-\big(\mu +\sqrt{\mu^2-2\beta}\big) x}
&\text{for $\mu>\sqrt{2\beta}$},\\
(Ax+B)e^{-\sqrt{2\beta}\,x}
&\text{for $\mu=\sqrt{2\beta}$}.\\
\end{cases}\quad\text{[linearized]}
\end{equation}
For $x$ large, $\tilde h_\mu$ is close to $h_\mu$ with the
meaning that for some constant $A$ and $B$, $1-\tilde h_\mu\sim1-h_\mu$
(see the discussion in Section~\ref{proofprop3}).
Of course, if $A\ne0$, the term in factor of $B$ is negligible compared to
the term in factor of $A$ and the $A$ term alone is an equivalent to
$1-h_\mu$. When solving \eqref{defhmu}, it turns out that the solution has
a non-zero $A$ term and that, therefore, 
\begin{equation}
1-h_\mu(x)\sim \begin{cases}
A e^{-\big(\mu
-\sqrt{\mu^2-2\beta}\big) x}&\text{for $\mu>\sqrt{2\beta}$},\\
A x e^{-\sqrt{2\beta x}} &\text{for $\mu=\sqrt{2\beta}$},
\end{cases}
\label{decayTW}
\end{equation}
where $A$ depends on $\mu$.

We now consider equation~\eqref{E:TW} for $\TW(x)$.
Of course, the boundary condition of~\eqref{E:TW} is not sufficient to
determine a unique solution, and for a  range of values of $c$ there
exists a solution to
\begin{equation}\label{eq1-hmu}
0=\frac12 v''+\mu  v' +\beta(v^2-v),\quad
v(+\infty)=1,\quad
v(0)=0,\quad v'(0)=c.
\end{equation}
(The difference with equation \eqref{E:TW} for $\TW(x)$ is the added
condition $v'(0)=c$. Figure~\ref{F:solutions} shows several
solutions.)

One can then do, as above, a large~$x$ analysis of $v$ and, the
partial differential equation being the same, one finds again that
$1-v\sim 1-\tilde h_\mu$ ($x$ large) as given in
\eqref{sollinearized} for some $c$ and $\mu$ dependent values of $A$ and $B$.
Generically, $A$ is non-zero and $1-v$ decays as
$1-h_\mu$ in \eqref{decayTW} (up to a multiplicative constant; the $A$
is usually different and can even be negative, see Figure~\ref{F:solutions}).

However, for a well chosen value of $c$ (depending on $\mu$), one has
$A=0$, and the asymptotic decay of $1-v$ is given by the $B$ term (that
is: it decays much faster). The meaning of Proposition~\ref{prop3} is
that $\TW(x)$ is precisely that very special solution to~\eqref{E:TW} that
decays unlike all the other ones and unlike the travelling wave $h_\mu$.

\bigbreak

It is also interesting to remark that an equation very similar
to~\eqref{E:TW} appears in the study \cite{HHK,bbs2010} of the the extinction probability of
a branching Brownian motion with absorption  and supercritical drift $\mu>-\sqrt{2\beta}$ 
(regimes B and C): let
$\theta(x)=\P^x[\Nabs(\infty)=\emptyset]$ be the
extinction probability when the system is started from~$x$; then one has
\begin{equation}\label{E:extinct}
\begin{cases} 0=\frac12 \theta''  + \mu \theta'+\beta(\theta^2-\theta),
& x\ge 0, \\ 
 \theta(0)=1, \ \theta(\infty) = 0.  \end{cases}
\end{equation}
Equations \eqref{E:TW} and \eqref{E:extinct}  differ only by their boundary
conditions; however \eqref{E:extinct} has a unique solution, whereas
\eqref{E:TW} has many.

A possible way to understand the difference is that an asymptotic analysis
of $\theta(x)$ for large~$x$ similar to \eqref{sollinearized} yields only
one possible exponential decay: $\theta(x)\sim
A \exp[-(\mu+\sqrt{\mu^2+2\beta})x]$ for some constant~$A$, which means
that, up to translations, there is only one solution which does converge to zero
at infinity, whereas for $\TW(x)$ there
were two possible exponential decays and infinitely many solutions.
Otherwise said, if one were to impose $\theta(0)=1$ and $\theta'(0)=-c$,
there would only be one value of $c$ for which $\theta$ would converge to
zero at infinity.

%%%%%%%%%%%%%%%%%%%%%%%%%%%%%%
%%%%%%%%%%%%%%%%%%%%%%%%%%%%%%
\subsection{A travelling wave result}

In regimes A and~B ($\mu<\sqrt{2\beta}$) one has
$\TW(x)=0$ because $\Nhits(\infty)>0$ almost surely, 
which is not very interesting. What is more interesting is the way
that
$\pr(t,x)$, defined in \eqref{eq:u} as $\P^x\big[K(t)=0\big]$, converges to
zero: it does so by assuming the shape of the
critical travelling wave of the KPP equation.
Let us recall
quickly the well known facts on this critical travelling wave.

Consider the KPP equation~\eqref{reallyKPP1} without drift on the whole
line with Heaviside initial conditions:
\begin{equation}
\label{reallyKPP}
\begin{cases}
\partial_t h = \frac12 \partial_{xx} h  + \beta(h^2-h),
\\
h(0,x)=0 \ (\forall x < 0) ,\qquad h(0,x) = 1  \ (\forall x>0).
\end{cases}
\end{equation}
It is well known that $h(t,x)$ is the probability that the leftmost
particle at time $t$ of a branching Brownian motion started at $x$ is to the right of zero.
Furthermore, this probability converges to the critical travelling
wave in the following sense:
$$h(t,m_t+x)\to h_*(x) \text{ uniformly in $x$ as $t\to\infty$}$$
with
\begin{equation}
\label{m_t}
m_t :=\sqrt{2\beta}\,t-\frac{3}{2\sqrt{2\beta}}\log t +\text{Cste}
\end{equation}
and where $h_*:=h_{\sqrt{2\beta}}$ is the travelling wave moving at the
minimal possible velocity $\sqrt{2\beta}$, see~\eqref{defhmu}. To fix the
invariance by translation, we impose the further condition $h_*(0)=1/2$:
\begin{equation} \label{h_*}
\begin{cases} 
0= \frac12 h_*''  + \sqrt{2\beta}\, h_*'+\beta(h_*^2-h_*),  \\
h_*(-\infty) = 0  ,\quad
h_*(0)=\frac12
,\quad
h_*(+\infty)=1
, \end{cases}
\end{equation}
and the solution to~\eqref{h_*} is now unique.

Adding a drift $+\mu\partial_x h$ to \eqref{reallyKPP} would only shift the
solutions by $-\mu t$ and would make \eqref{reallyKPP} very similar to
\eqref{KPP}: the only difference would be that $\pr$ is defined on $\R^+$
and $h$ on $\R$, but as both equations converge quickly to zero around the
origin in regimes~A and~B $(\mu <\sqrt{2\beta})$, this difference turns out
to be minimal
and one has
\begin{thm}
\label{thm:KPP}
In regimes~A and~B ($\mu<\sqrt{2\beta}$), there
exists a constant~$C$ depending on $\mu$ and $\beta$ such that
\[
\pr(t,x+m_t-\mu t+C) \to h_*(x) \quad \text{uniformly in } x \text{ as } t \to \infty
\]
where $m_t$ is given by \eqref{m_t} 
and $h_*$ is the solution to \eqref{h_*}.
\end{thm}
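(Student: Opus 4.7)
My plan is to compare $\pr$ with the whole-line KPP solution $h$ started from Heaviside initial data, for which Bramson's classical convergence to the critical travelling wave $h_*$ is known, and to show that the absorbing boundary at $x=0$ only produces a finite, stabilising shift. To this end set $v(t,y):=\pr(t,y-\mu t)$: a direct computation shows that $v$ satisfies the driftless KPP equation $\partial_t v = \frac12 \partial_{yy} v + \beta(v^2-v)$ on the moving half-line $\{y>\mu t\}$, with $v(t,\mu t)=0$ and $v(0,y)=\indic{y>0}$. The statement to prove then becomes $v(t,m_t+C+x)\to h_*(x)$ uniformly in $x$. Writing $h(t,y)$ for the whole-line KPP with the same Heaviside initial datum, Bramson's theorem gives $h(t,m_t+x)\to h_*(x)$ uniformly.

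The upper bound is immediate from the parabolic maximum principle on $\{y>\mu t\}$: both $v$ and $h$ coincide at $t=0$ on $\{y>0\}$, and $v=0\le h$ on the moving boundary, so $v\le h$ throughout the domain and $\limsup_{t\to\infty} v(t,m_t+x)\le h_*(x)$.

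The lower bound is the substantial step. The crucial geometric fact is that in both regimes A and B we have $\mu<\sqrt{2\beta}$, so the moving boundary $y=\mu t$ recedes from the Bramson front at the positive speed $\sqrt{2\beta}-\mu$; in particular $h(t,\mu t)\to 0$ exponentially fast. I would build a subsolution of the $v$-problem by restarting the whole-line KPP at a large time $T$ from a truncated profile supported well to the right of the moving boundary, then correcting it by a small exponentially decaying function of $y-\mu t$ so that the boundary trace stays non-positive. By stability of the critical travelling wave under such localised perturbations, the resulting subsolution converges to $h_*(\cdot-C')$ for a finite $C'$. A complementary probabilistic route uses the identity $\pr(t,x)=\P^x\bigl[\min_{s\le t,\,u\in\Nall(s)}X_u(s)>0\bigr]$ together with the fact that in regimes A and B the running infimum is asymptotically attained near $s=t$ (because $\min_u X_u(s)\sim(\mu-\sqrt{2\beta})s$ drifts to $-\infty$), reducing the analysis to the one-time-slice distribution, which is precisely $h(t,x+\mu t)$.

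The main obstacle is to show that the time-dependent shift between $v(t,m_t+\cdot)$ and $h_*$ stabilises to a finite constant $C$, rather than drifting. This combines Bramson's sharp logarithmic-correction estimates with the exponential decay $h(t,\mu t)\to 0$, but the book-keeping is delicate, especially in regime~A where the BBM itself goes extinct, the event $\{\Nhits(t)=0\}$ becomes rare, and the control of the shift likely passes through the Girsanov tilts and spine decomposition of Section~\ref{S: mg representation}.
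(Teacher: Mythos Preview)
Your overall architecture is the right one and matches the paper's: compare $\pr$ with a whole-line KPP solution to which Bramson applies, get one inequality for free, and work for the other. The upper bound $v\le h$ via the maximum principle on the moving domain is correct and is essentially the easy half in the paper as well.

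The gap is the lower bound. Your two suggested routes are each the right intuition but neither is carried far enough to close. The ``subsolution by restarting $h$ at time $T$ from a truncated profile and correcting by a small exponential'' is not a construction one can verify as stated: you have not said what the correction is, why the result is a genuine subsolution of the nonlinear KPP on the moving half-line, nor why the perturbation only shifts the limiting front by a \emph{finite} amount (this last point is exactly your acknowledged ``main obstacle'', and nothing in the proposal addresses it). The probabilistic remark that the running minimum of the BBM is asymptotically realised near time $t$ is the correct heuristic, but to turn it into a proof you need a quantitative statement of the form ``looking only at a window $[t-T,t]$ already pins down the front up to error $\epsilon(T)\to 0$''.

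This is precisely what the paper does, and it is worth seeing how cleanly: define
\[
\auxsol^{T}(t,x):=\P^x\big(\forall r\in[t-T,t],\ \forall u\in\Nall(r):X_u(r)>0\big),
\]
which solves whole-line KPP from time $T$ with initial datum $\pr(T,\cdot)\indic{\cdot>0}$, so Bramson gives $\auxsol^{T}(t,\cdot+m_t-\mu t+C_T)\to h_*$. The inequality $\pr\le\auxsol^T$ is immediate. For the reverse, the paper uses a one-line rescaling trick: $\tilde\pr:=(\pr+\epsilon)/(1+\epsilon)$ is a \emph{supersolution} of KPP on $\R_+$, and since $\auxsol^T(t,0)\le\epsilon/(1+\epsilon)=\tilde\pr(t,0)$ for $T$ large (because $\mu<\sqrt{2\beta}$), the maximum principle gives $\auxsol^T\le\tilde\pr$, hence $\|\pr-\auxsol^T\|_\infty\le\epsilon$. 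Finally $C_T\to C$ follows from a short median-tracking argument. No spine decomposition or Girsanov tilt is used anywhere; the proof is purely analytic and treats regimes~A and~B identically, so your suggestion that regime~A requires the machinery of Section~\ref{S: mg representation} is a red herring.
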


It is interesting to compare this result about the behaviour of $u(t,x)
= \P^x(\Nhits(t)=0)$ when $\mu <\sqrt{2\beta}$ to the behaviour of the
extinction probability $ \tpr(t,x) = \P^x(\Nabs(t)=\emptyset )$ when $\mu \le
-\sqrt{2\beta}.$ It is not hard to see that 
$\tpr$ satisfies the same equation \eqref{KPP} as $u$ with different boundary conditions, which is $1$ minus the boundary condition in \eqref{KPP}; namely $\tpr$ solves 
\begin{equation}
\label{KPP tilde}
 \begin{cases}  \partial_t  \tpr = \frac12 \partial_{xx}\tpr  + \mu
\partial_x\tpr +\beta(\tpr^2-\tpr), \\ \tpr(t,0)=1 \ (\forall t \ge 0)
,\quad \pr(0,x) = 0  \ (\forall x>0), \end{cases}
\end{equation}
What is
particularly striking is that in the critical case $\mu=-\sqrt{2\beta}$ it
is known since Kesten \cite{kesten78} (see also \cite{bbscritical}) that
to survive up to time $t$ one must start with an initial particle at
position $x=c t^{1/3}$. This means that if $\tpr(t, x+\tilde m_t)$
converges to some limit front shape then the centering term giving the
position of the front $\tilde m_t$ has to be of order $c t^{1/3}$. However
the convergence of the solution of \eqref{KPP tilde} to a travelling wave is
at present an open problem. 

%%%%%%%%%%%%%%%%%%%%%%%%%%%%%%%%%%%%%%%%%%%%%%%%%%
%%%%%%%%%%%%%%%%%%%%%%%%%%%%%%%%%%%%%%%%%%%%%%%%%%
%%%%%%%%%%%%%%%%%%%%%%%%%%%%%%%%%%%%%%%%%%%%%%%%%%
\section{Proofs}

The proofs section are presented mostly in the same order as the results:
Section~\ref{proofsKinfty} contains the proofs to Theorems~\ref{main thm:
exp moments} and~\ref{thm w_s} about the
structure of the functions $\TW_s(x)=\E^x[s^{\Nhits(\infty)}]$ in
regime~C ($\mu\ge\sqrt{2\beta}$).
Section~\ref{Proofsomega} contains the proofs to Proposition~\ref{prop3},
Theorem~\ref{thm2}, Lemma~\ref{L:conv of Z}, Theorem~\ref{Thm: martingale
representation} and Proposition~\ref{thm:solutionExpansion} about the
different representations of $\TW(x)=\TW_0(x)=\P^x[\Nhits(\infty)=0]$
in regime~C ($\mu\ge\sqrt{2\beta}$). Finally, Section~\ref{S:proof mu<muc}
contains the proof of Theorem~\ref{thm:KPP} about the establishment of a
travelling for $\pr(t,x)=\P^x[\Nhits(t)=0]$ in regimes~A and~B
($\mu<\sqrt{2\beta}$) and  the asymptotic behavior of $s_0$ established in Proposition~\ref{prop:s0}  is proven last.

\subsection{Proof concerning the tail behaviour of $\Nhits(\infty)$}
\label{proofsKinfty}

In this section we consider exclusively regime~C ($\mu\ge\sqrt{2\beta}$) and
we focus on the problem of the exponential moments of $\Nhits(\infty)$.
We first establish some properties of $\TW_{s}(x)
= \E^x[s^{\Nhits(\infty)}]$ as defined in \eqref{w_s} and proceed to prove
Theorems~\ref{thm w_s} and Proposition~\ref{prop:s0}. We then prove
the asymptotic behaviour~\eqref{asymptNhits} to complete the proof of
Theorem~\ref{main thm: exp moments}.

The first property we need is that for a given~$s$, the quantity $\TW_s(x)$
is either finite for all~$x>0$ or infinite for all $x>0$:
\begin{lem}
For a given~$s$,
\[
\big(\exists x>0 \ : \ \TW_{s}(x)<+\infty\big)
\Leftrightarrow
\big(\forall x>0 \ : \ \TW_{s}(x)<+\infty\big).
\]
\end{lem}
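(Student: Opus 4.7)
The direction ``$\forall\Rightarrow\exists$'' is trivial, so the task is to upgrade finiteness at a single $x_0>0$ to finiteness everywhere on $(0,\infty)$. When $s\in[0,1]$ there is nothing to prove since $s^{\Nhits(\infty)}\le 1$, and so $\TW_s\le 1$ pointwise. I therefore fix $s>1$ and suppose $\TW_s(x_0)<\infty$.

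The plan is a one-step ``restart'' argument based on the strong Markov property. Let $B$ denote the trajectory of the initial particle (a Brownian motion with drift $\mu$ started at $x_0$), set $\sigma_x:=\inf\{t\ge 0:B_t=x\}$ and $T_0:=\inf\{t\ge 0:B_t=0\}$, and let $\tau\sim\mathrm{Exp}(\beta)$ be an independent first-branching clock. On the event
\[
E:=\{\sigma_x<T_0\wedge\tau\}
\]
the initial particle reaches $x$ without having branched and without having been absorbed, so $\Nhits(\sigma_x)=0$. Applying the strong Markov property at $\sigma_x$ together with the memoryless property of $\tau$, the future of the system from time $\sigma_x$ onwards is a fresh branching Brownian motion with absorption started from a single particle at $x$. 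Hence
\[
\TW_s(x_0)\;\ge\;\E^{x_0}\!\left[s^{\Nhits(\infty)}\indic{E}\right]\;=\;\P^{x_0}(E)\,\TW_s(x),
\]
so that $\TW_s(x)\le\TW_s(x_0)/\P^{x_0}(E)$, and the lemma will follow as soon as we check $\P^{x_0}(E)>0$.

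Verifying $\P^{x_0}(E)>0$ is a routine one-dimensional first-passage computation. For $0<x<x_0$, the continuous path $B$ must pass through $x$ in order to reach $0$, so $\{\sigma_x<T_0\}=\{\sigma_x<\infty\}$, and the classical minimum-of-drifted-BM formula gives $\P^{x_0}(\sigma_x<T_0)=e^{-2\mu(x_0-x)}>0$; for $x>x_0$, the positive drift $\mu\ge\sqrt{2\beta}>0$ together with the standard exit-probability formula on $(0,x)$ gives $\P^{x_0}(\sigma_x<T_0)>0$ as well. On $\{\sigma_x<T_0\}$ the time $\sigma_x$ is almost surely finite, so by independence of $\tau$ the conditional probability $\P^{x_0}(\tau>\sigma_x\mid\sigma_x<T_0)=\E^{x_0}[e^{-\beta\sigma_x}\mid\sigma_x<T_0]$ is strictly positive. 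Multiplying these two positive factors yields $\P^{x_0}(E)>0$ in every case, and no real obstacle arises beyond this bookkeeping.
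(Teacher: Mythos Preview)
Your proof is correct and follows essentially the same route as the paper. The paper's argument is simply: there is a positive probability $\epsilon(x,y)$ that the initial particle starting at $x$ reaches $y$ before branching or being absorbed, and then $\TW_s(x)\ge \epsilon(x,y)\,\TW_s(y)$ by the strong Markov property; you have written out exactly this restart argument, with the additional care of computing (or bounding) $\epsilon(x_0,x)=\P^{x_0}(E)$ explicitly rather than just asserting its positivity.
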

\begin{proof}
Fix $s>0$, $x>0$ and $y>0$.
There is a positive probability, which we note $\epsilon(x,y)$, that the initial
particle starting from $x$ reaches position $y$
before any branching or killing happens. Then
$$
\TW_s(x) =\E^x[s^{\Nhits(\infty)}] \ge
 \epsilon(x,y)\E^y[s^{\Nhits(\infty)}] 
=\epsilon(x,y)\TW_s(y).
$$
Therefore, if $\TW_s(x)$ is finite, then $\TW_s(y)$ is also finite.
\end{proof}
Remark: in the following, we write $\TW_s<\infty$ when the conditions of the
lemma are met. Clearly, this is the case when $s\le1$. Furthermore, as
$s\mapsto\TW_s(x)$ is obviously increasing, if $\TW_{s_0}<\infty$ for some
$s_0$,
then $\TW_s<\infty$ for all $s<s_0$.

When $\TW_s<\infty$, it is clear by standard arguments that $\TW_s(x)$ is
solution to
\begin{equation}  \label{Eq:TW_s}
\begin{cases}0= \frac12 \TW_s''  + \mu \TW_s'+\beta(\TW_s^2-\TW_s), \\ 
\TW_s(0)=s. \end{cases}
\end{equation}

Let us now prove Theorem~\ref{thm w_s}. A slightly more general result is
given by the following Lemma:
\begin{lem}\label{lem:coupling}
If $\TW_s<\infty$
 then, for any $x\ge0$ and $h\ge0$,
$$
\TW_s(x+h) = \TW_{\TW_s(h)}(x).
$$
\end{lem}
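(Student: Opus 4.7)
The plan is to exploit the branching Markov property together with translation invariance to decompose $\Nhits(\infty)$ under $\P^{x+h}$ as a random sum of i.i.d.\ copies of $\Nhits(\infty)$ under $\P^h$, summed over an independent count distributed as $\Nhits(\infty)$ under $\P^x$. Concretely, starting from a single particle at height $x+h$ and placing an auxiliary absorbing barrier at height $h$, the sub-process living above $h$ has (by translation) the law of a BBM with absorption at $0$ started from $x$, while each descendant that reaches $h$ initiates an independent BBM absorbed at $0$ and started from $h$. Taking the generating function of the total absorption count at~$0$ then yields the identity directly.

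I would carry out the argument in three steps. (i) Let $N_h$ denote the number of particles eventually absorbed at the auxiliary barrier at height $h$ in the system started from $x+h$; by translation invariance, $N_h$ under $\P^{x+h}$ has the same law as $\Nhits(\infty)$ under $\P^x$. (ii) Apply the strong branching Markov property at the (countably many) first-hitting times of level $h$: conditionally on the trajectories above $h$, the $N_h$ crossing particles launch $N_h$ independent BBMs from $h$ with absorption at $0$, and since every particle ever absorbed at $0$ descends from exactly one such crossing,
\begin{equation*}
\Nhits(\infty) \;\stackrel{d}{=}\; \sum_{i=1}^{N_h} K_i,
\end{equation*}
with $K_1,K_2,\dots$ i.i.d.\ copies of $\Nhits(\infty)$ under $\P^h$, independent of $N_h$. (iii) Compute the generating function at $s$ by conditioning on $N_h$ and using independence, so that
\begin{equation*}
\TW_s(x+h) \;=\; \E^{x+h}\!\sbr{\TW_s(h)^{N_h}} \;=\; \E^x\!\sbr{\TW_s(h)^{\Nhits(\infty)}} \;=\; \TW_{\TW_s(h)}(x),
\end{equation*}
where the last equality is definition~\eqref{w_s} applied with parameter $\TW_s(h)$. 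Finiteness of the right-hand side is automatic from finiteness of the left-hand side, so no separate argument is needed to check that $\TW_s(h)$ lies in the admissible range.

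The main technical subtlety is making the branching Markov step rigorous, since particles first hit $h$ at distinct random times along different genealogical lines. The standard fix is to enumerate the crossing particles in a measurable way (e.g.\ by Ulam--Harris label) and apply the strong Markov property successively, working first on $\{N_h = n\}$ for each finite $n$ (this exhausts the sample space since $N_h$ equidistributes with $\Nhits(\infty)$ under $\P^x$, which is a.s.\ finite in regime~C) before summing or, equivalently, taking a monotone limit inside the generating-function computation. This is the branching analogue of the Galton--Watson identity $\E[s^{Z_{n+m}}] = \E\!\sbr{\E[s^{Z_m}]^{Z_n}}$ and requires no new ideas beyond careful bookkeeping.
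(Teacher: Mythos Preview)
Your proof is correct and follows essentially the same approach as the paper: both arguments decompose the absorption at~$0$ through an intermediate barrier (you place it at height~$h$ starting from $x+h$, the paper equivalently shifts everything down by $x+h$ and uses barriers at $-x$ and $-x-h$), then invoke translation invariance and the branching Markov property to obtain $\E^{x+h}[s^{\Nhits(\infty)}]=\E^x[\TW_s(h)^{\Nhits(\infty)}]$. Your treatment of the stopping-line technicality is, if anything, more explicit than the paper's.
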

Setting $s=0$ and renaming $\TW_0(h)$ as $s$ gives the first line of
the Theorem. Once we have proved that $s_0$ exists, setting $s=s_0$  and
renaming $\TW_{s_0}(h)$ as $s$ gives the second line of the Theorem.
\begin{proof}
Instead of starting our branching process at position $x$ and
killing particles at $0$, it is here more convenient to think of the
process as started at 0 and particles being absorbed at $-x$. This allows
to couple different values of the killing position. In particular, if
$\mathcal{H}_x$ designates the particles stopped when they first hit $-x$
and $\Nhits_x(\infty)$ is the number of particles in $\mathcal{H}_x$, we
have that
$$
\TW_{s}(x+h) 
=\E\big[ \prod_{u \in \mathcal{H}_x} s^{\Nhits^{(u)}_h(\infty)}\big] 
=\E\big[ \prod_{u \in \mathcal{H}_x} \TW_s(h)\big] 
=\E\big[ \TW_s(h)^{\Nhits_x(\infty)}\big] 
= \TW_{\TW_s(h)}(x),
$$
where $\Nhits^{(u)}_h(\infty)$ is the total number of descendent of the particle $u$ which are killed at $-x-h$ (which by translation invariance of the branching Brownian motion and the branching property is an independent copy of $\Nhits_h(\infty)$).
\end{proof}

We now have a monotonicity result:
\begin{lem}\label{mono}\ 
\begin{enumerate}
\item If $s<1$, $x\mapsto\TW_s(x)$ is an increasing function converging to~1.
\item If $s>1$ and $\TW_s<\infty$, $x\mapsto\TW_s(x)$ is a decreasing function
converging to~1.
\end{enumerate}
\end{lem}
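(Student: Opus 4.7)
Both parts rest on the identity of Lemma~\ref{lem:coupling}, which by swapping the roles of $x$ and $h$ can equivalently be written
\begin{equation*}
\TW_s(x+h) \;=\; \TW_{\TW_s(x)}(h) \;=\; \psi_h(\TW_s(x)),
\end{equation*}
where $\psi_h(a):=\E^h[a^{\Nhits(\infty)}]=\TW_a(h)$ is the probability generating function of $\Nhits^{(h)}(\infty)$. The analytic facts I will exploit are that $\psi_h$ is convex (its Taylor coefficients are probabilities), that $\psi_h(1)=1$, and, crucially, that $\psi_h'(1)<1$ for $h>0$. The strict bound on the derivative comes directly from the martingale $\Zstop$ defined in~\eqref{def of Z}: $\Zstop$ is a positive martingale with $\Zstop(0)=e^{-rh}$ under $\P^h$, and at every $t$ one has the pointwise bound $\Zstop(t)\ge\Nhits(t)$ because each particle absorbed by time $t$ contributes $e^{-r\cdot 0}=1$ to the sum defining $\Zstop$. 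Letting $t\to\infty$ and applying monotone convergence gives $\psi_h'(1)=\E^h[\Nhits(\infty)]\le e^{-rh}<1$ for $h>0$. Only this Fatou-type bound, not the full $L^1$-convergence asserted in Lemma~\ref{L:conv of Z} (which is proved later), is required, so the argument remains self-contained. Consequently $\phi_h:=\psi_h-\mathrm{id}$ is convex with $\phi_h(1)=0$ and $\phi_h'(1)<0$; hence $\phi_h'\le\phi_h'(1)<0$ throughout $[0,1]$, so $\phi_h$ is strictly decreasing on $[0,1]$ and therefore $\psi_h(a)\ge a$ for every $a\in[0,1]$.

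For $s\in[0,1)$ one has $\TW_s(x)\in[0,1]$, so applying $\psi_h(a)\ge a$ at $a=\TW_s(x)$ immediately yields $\TW_s(x+h)\ge\TW_s(x)$. The convergence $\TW_s(x)\to 1$ then follows from the sandwich $\omega(x)\le\TW_s(x)\le 1$ combined with the fact, recalled just after~\eqref{def omega}, that $\omega(x)\to 1$ in regime~C.

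For the case $s>1$ with $\TW_s<\infty$, I bootstrap from Part~1 via stochastic domination. The inequalities $\TW_s(y)\ge\TW_s(x)$ for $s\in[0,1]$ and $y\ge x$ say precisely that the generating function of $\Nhits^{(y)}(\infty)$ dominates that of $\Nhits^{(x)}(\infty)$ on $[0,1]$. Via the tail identity
\begin{equation*}
\E[s^N] \;=\; 1+(s-1)\sum_{n\ge 0}s^n\,\P(N>n), \qquad s\ge 0,\ \E[s^N]<\infty,
\end{equation*}
this is equivalent to the pointwise tail ordering $\P(\Nhits^{(y)}(\infty)>n)\le\P(\Nhits^{(x)}(\infty)>n)$ for every $n$. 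Reapplying the same identity at the given $s>1$ and comparing term by term immediately gives $\TW_s(y)\le\TW_s(x)$. For the large-$x$ limit, $\TW_s$ is then decreasing and bounded below by $1$ (since $s^{\Nhits(\infty)}\ge 1$), so $\TW_s(x)\downarrow L\in[1,s]$. Passing to $y\to\infty$ in $\TW_s(y+h)=\psi_h(\TW_s(y))$ and using continuity of $\psi_h$ on $[0,s]$ (Abel's theorem, since all coefficients are non-negative and $\psi_h(s)=\TW_s(h)<\infty$) yields $L=\psi_h(L)=\TW_L(h)$ for every $h\ge 0$. Hence $\TW_L$ is the constant function $L$, and substituting this constant into the ODE~\eqref{Eq:TW_s} forces $\beta L(L-1)=0$; since $L\ge 1$, we conclude $L=1$.

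The main technical step, and the only real obstacle, is establishing $\psi_h'(1)<1$; once this is in hand via the Fatou bound on $\Zstop$, everything else reduces to convexity of generating functions combined with the coupling identity of Lemma~\ref{lem:coupling}.
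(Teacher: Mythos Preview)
Your argument for Part~1 is correct and differs pleasantly from the paper's: the paper invokes the probabilistic interpretation of $\TW_0$ as the cumulative distribution function of the all-time minimum to get monotonicity of $\TW_0$ directly, and then shifts to $\TW_s$ via Lemma~\ref{lem:coupling}; you instead exploit $\psi_h'(1)=\E^h[\Nhits(\infty)]\le e^{-rh}<1$ together with convexity of the generating function, which is a clean self-contained route.

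Part~2, however, has a genuine gap. The step ``generating-function ordering on $[0,1]$ is equivalent, via the tail identity, to pointwise tail ordering'' is false: tail ordering implies generating-function ordering, but not conversely. A power series $\sum_{n\ge0} c_n s^n$ can be nonpositive on all of $[0,1)$ without every $c_n$ being nonpositive. More to the point, the tail ordering you assert simply does not hold for $\Nhits(\infty)$: under $\P^0$ one has $\Nhits(\infty)=1$ almost surely, so $\P^0(\Nhits(\infty)>1)=0$, whereas for any $\epsilon>0$ there is positive probability that the initial particle branches before hitting $0$ and both offspring get absorbed, giving $\P^\epsilon(\Nhits(\infty)>1)>0$. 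Hence $\Nhits^{(\epsilon)}(\infty)$ is \emph{not} stochastically dominated by $\Nhits^{(0)}(\infty)$, and your term-by-term comparison at $s>1$ is not available.

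The paper's proof of Part~2 proceeds quite differently, working directly with the ODE~\eqref{Eq:TW_s}: it first shows via Lemma~\ref{lem:coupling} that $\TW_s$ must be monotone (by a case split on the sign of $\TW_s'(0)$), and then rules out the increasing alternative by observing that if $\TW_s$ were increasing and $\ge s>1$, the ODE would force $\TW_s''\le -2\beta(s^2-s)<0$ uniformly, making $\TW_s'$ eventually negative. Your convexity machinery could in principle be pushed further for $s>1$---since $\psi_h(1)=1$ and $\psi_h'(1)<1$ do give $\psi_h(a)<a$ for $a$ in a right-neighbourhood of $1$---but extending this to all of $(1,s_0]$ without circularity requires an additional argument that you have not supplied.
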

\begin{proof}
Once the increasing/decreasing part is proved, the fact that the limit is~1
is obvious: from its definition, it is clear that $\TW_s<1$ if $s<1$ and
$\TW_s>1$ if $s>1$.
Assuming $\TW_s$  is increasing or decreasing (depending on $s$), it must have
a limit, and from~\eqref{Eq:TW_s} that limit must be 1.

From its interpretation as the distribution of the all-time minimum of
a branching Brownian motion, see \eqref{def omega},
it is furthermore clear that $\TW_0=\TW$ is an increasing function.
Then, the coupling provided by Lemma~\ref{lem:coupling} (or more simply
Theorem~\ref{thm w_s}) implies that $\TW_s$ is an increasing function for
all $s<1$.

Therefore, it only remains to prove that for $s>1$, $\TW_s$ is decreasing
when it is finite.

Assume $s>1$ and $\TW_s<\infty$. We first show that $\TW_s$ is monotonous
by considering two cases:
\begin{itemize}
\item If $\TW_s'(0)>0$ then, for all $h>0$ small enough, $\TW_s(h)>s$. But, for 
$x$ fixed, $s\mapsto\TW_s(x)$ is a strictly increasing function so
$\TW_{\TW_s(h)}(x)>\TW_s(x)$. Then by Lemma~\ref{lem:coupling},
$\TW_s(x+h)>\TW_s(x)$ for all $x$ and all $h>0$ small enough: $\TW_s$ is
increasing.
\item If $\TW_s'(0)\le0$ then, for all $h>0$ small enough, $\TW_s(h)<s$ because in
the limit case $\TW_s'(0)=0$, one has $\TW_s''(0)<0$ from \eqref{Eq:TW_s}.
Then, as in previous case, $\TW_s(x+h)=\TW_{\TW_s(h)}(x)<\TW_s(x)$ for all
$x$ and all $h>0$ small enough: $\TW_s$ is decreasing.
\end{itemize}
It now remains to rule out the possibility that $\TW_s$ is increasing for
$s>1$. Imagine that $s>1$ and $\TW_s$ increases. Then, from \eqref{Eq:TW_s},
$\TW_s''(x)\le-2\beta(\omega_s^2(x)-\omega_s(x))\le-2\beta(s^2-s)$ and
$\TW_s'(x)\le \TW_s'(0)-2\beta(s^2-s)x$, which becomes negative for $x$
large enough, in
contradiction with the fact that $\TW_s$ increases. So $\TW_s$ must
decrease for $s>1$.
\end{proof}

We need now to characterize the values of $s$ for which $\TW_s<\infty$.
\begin{lem}
Assume $s>1$. If there exists a function $v$ which solves
\begin{equation}\label{eqv}
\begin{cases}0= \frac12 v''  + \mu v'+\beta(v^2-v), \\ 
v(0)=s,\qquad v(x)\ge1 \ (\forall x\ge0), \end{cases}
\end{equation}
then $\TW_s<\infty$.
\label{lem14}
\end{lem}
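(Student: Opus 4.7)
The plan is to exhibit $v$ as the expectation of a multiplicative functional on the branching system that dominates $s^{\Nhits(\infty)}$; the ODE in \eqref{eqv} is precisely the infinitesimal compatibility condition that makes this work.

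Concretely, I would introduce the process
$$M(t) := s^{\Nhits(t)} \prod_{u \in \Nabs(t)} v(X_u(t)), \qquad t \ge 0,$$
and argue that $M$ is a non-negative local martingale under $\P^x$, with $M(0) = v(x)$. Between branching and absorption events each factor $v(X_u(t))$ evolves as a smooth function of a Brownian motion with drift $\mu$, so by It\^o's formula it contributes an infinitesimal drift proportional to $\tfrac12 v'' + \mu v'$. At a branching event a factor $v(X_u)$ is replaced by $v(X_u)^2$, so a standard branching computation adds $\beta(v^2-v)$ to the drift; the ODE cancels the sum of the two contributions. At an absorption time the factor $v(X_u(t^-)) = v(0) = s$ is removed from $\prod_{u \in \Nabs(t)} v(X_u(t))$ and simultaneously a factor $s$ appears in $s^{\Nhits(t)}$, so $M$ is continuous through absorption. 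This is the standard multiplicative McKean-type functional adapted to absorption at $0$.

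Once the local martingale property is granted, the conclusion is immediate. Since $v \ge 1$ on $[0,\infty)$ and $s > 1$, we have $M(t) \ge s^{\Nhits(t)} \ge 1$, so $M$ is a non-negative supermartingale and $\E^x[M(t)] \le M(0) = v(x) < \infty$ for every $t$. Because $\Nhits(\cdot)$ is non-decreasing and $s \ge 1$, monotone convergence gives
$$\TW_s(x) = \lim_{t \to \infty} \E^x\bigl[s^{\Nhits(t)}\bigr] \le \limsup_{t \to \infty} \E^x[M(t)] \le v(x) < \infty,$$
which is exactly the claim.

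The main obstacle is the rigorous proof that $M$ is a local martingale rather than just a formal drift computation. This is handled by a standard localization: stop the process at the first time when either the number of living particles exceeds $N$ or some particle exits $[0,R]$, verify the martingale property on this stopped process by It\^o combined with a finite induction over branching events, and then let $N,R \to \infty$. The supermartingale bound $\E^x[M(t)] \le v(x)$ controls the passage to the limit and rules out any mass escape. The remaining ingredients — the pointwise inequality $M \ge s^{\Nhits}$, monotone convergence, and the final estimate — are then routine.
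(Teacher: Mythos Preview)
Your proposal is correct and is essentially the paper's probabilistic proof: the paper writes the same functional as $M_t=\prod_{u\in\Nstop(t)}v(X_u(t))$, where $\Nstop(t)$ is the population in which particles hitting $0$ are frozen rather than removed, which unwinds to exactly your $s^{\Nhits(t)}\prod_{u\in\Nabs(t)}v(X_u(t))$ since $v(0)=s$. The paper then uses the same supermartingale bound $M_t\ge s^{\Nhits(t)}$ to conclude; it also records a second, purely analytic proof via the parabolic maximum principle applied to $u_s(t,x)=\E^x[s^{\Nhits(t)}]$ with $v$ as a stationary supersolution.
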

Remark: The converse is obvious: when $\TW_s$ is finite, it is one of the
solutions to \eqref{eqv}. This Lemma allows to define
\begin{equation}
\begin{aligned}
s_0
&= 
	\sup\big\{s\geq1:\TW_{s}<\infty\big\},
\\&=
	\sup\big\{s\geq1:\text{a solution to \eqref{eqv} exists}\},
\end{aligned}
\label{defs0}
\end{equation}
and because $s\mapsto\TW_s(x)$ increases, one has $\TW_s<\infty$ for all
$s<s_0$.
\begin{proof}
We present two proofs: one probabilistic and one analytical.

Choose $s>1$ such that \eqref{eqv} has a solution~$v$.
We introduce the process 
$$
M_t:= \prod_{u \in  \Nstop(t)} v\big(X_u(t)\big),
$$
where we recall that $\Nstop(t)$ is the set of particles in the branching
Brownian motion where particles are frozen at the origin, see
Section~\ref{S: mg representation}.

$M_t$
is a positive local martingale and therefore a positive super-martingale
which thus converges almost surely to $M_\infty$. Observe that under
$\P^x$
$$ v(x) =M_0 \ge \E^x(M_t) \ge \E^x(M_\infty).$$ But since for all $t\ge 0$ one has 
$$M_t \ge v(0)^{\Nhits(t)}=s^{\Nhits(t)},$$
we see that $M_\infty \ge s^{\Nhits(\infty)}$ and therefore 
$$
\TW_s(x)=\E^x\big[s^{\Nhits(\infty)}\big] \le v(x) <\infty.
$$

The same result can be proved analytically through the maximum
principle. Let us introduce
\begin{equation} 
\pr_s(t,x) := \E^x[s^{\Nhits(t)}],
\end{equation}
which is clearly solution to
\begin{equation}\label{ustx}
 \begin{cases}  \partial_t\pr_s = \frac12 \partial_{xx}\pr_s  + \mu
\partial_x\pr_s +\beta(\pr_s^2-\pr_s), \qquad x\ge0,
\\ \pr_s(t,0)=s \ (\forall t \ge 0)
,\qquad \pr_s(0,x) = 1  \ (\forall x>0). \end{cases} \end{equation}
(Compare to~\eqref{eq:u}.)
With $v$ as above, one clearly has  
$\forall x\ge0$,  $\pr_s(0,x) \le v(x)$.
Therefore, by the maximum principle we have that 
$$
\pr_s(t,x) \le v(x) ,\qquad \forall t\ge 0, x\ge0
$$
and as $\pr_s(t,x) \nearrow \TW_s(x)$ as $t\to \infty$  we see that
$\TW_s(x)\le v(x)<\infty$.
\end{proof}

It is obvious that $s_0$ defined in \eqref{defs0} depends only on the ratio
$\mu/\sqrt\beta$ by a simple scaling argument: the branching Brownian
motion with drift $\mu$ and branching rate $\beta$
is transformed, when time is scaled by $\lambda$ and space by $\sqrt \lambda$, into a branching Brownian motion with drift $\mu \sqrt \lambda$ and branching rate $\beta \lambda$.
In particular,
$\TW_{s,\beta,\mu}(x)=\TW_{s,\beta\lambda,\mu\sqrt\lambda}(\sqrt\lambda\, x)=\TW_{s,1,\mu/\sqrt{\beta}}(x/\sqrt{\beta})$
with the obvious new notation.
What remains to be shown are the
following properties of $\TW_s$: $s_0$ is finite,
$\TW_{s_0}$ is finite, $\TW_{s_0}'(0)=0$ and $s_0>1$.

\begin{lem}
$s_0<\infty$, that is: $\Nhits(\infty)$ does not have exponential
moments of all orders.
\end{lem}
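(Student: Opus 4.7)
The plan is to prove a geometric lower bound
$$\P^{x_0}\bigl(\Nhits(\infty)\ge n\bigr)\ge c^n\qquad (n\ge 1)$$
for some $x_0>0$ and $c\in(0,1)$; this will suffice, for then for any $s>1/c$ and any~$n$,
$$\TW_s(x_0)\ge s^n\,\P^{x_0}\bigl(\Nhits(\infty)\ge n\bigr)\ge(sc)^n\xrightarrow[n\to\infty]{}\infty,$$
so $\TW_s(x_0)=+\infty$, and the finiteness dichotomy from the start of the section then forces $s_0\le 1/c<\infty$.

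Such a bound should follow by iterating a single-branching decomposition. Set $g_n(y):=\P^y(\Nhits(\infty)\ge n)$, let $T_b$ be the first branching time of the initial particle (an $\mathrm{Exp}(\beta)$ variable), $T_0$ its first hitting time of~$0$, and let $B$ denote its position. By the branching property, on $\{T_b<T_0\}$ the two post-branching subtrees are conditionally independent, each distributed as the process under $\P^{B_{T_b}}$, so for any $\epsilon>0$ one has
$$g_{n+1}(x_0)\ge \E^{x_0}\!\left[\indic{T_b<T_0,\,B_{T_b}\in[x_0-\epsilon,x_0]}\,g_1(B_{T_b})\,g_n(B_{T_b})\right].$$
The map $y\mapsto g_n(y)$ is non-increasing (a standard coupling argument, which is also a consequence of Lemma~\ref{mono} applied to $\TW_s$ for $s<1$, since $\TW_s(y)$ increasing in~$y$ for every $s\in[0,1)$ is equivalent to $\Nhits(\infty)$ being stochastically smaller under $\P^y$ for larger $y$), so $g_k(B_{T_b})\ge g_k(x_0)$ on the event above, and therefore
$$g_{n+1}(x_0)\ge \alpha\,g_1(x_0)\,g_n(x_0),\qquad \alpha:=\P^{x_0}\bigl(T_b<T_0,\,B_{T_b}\in[x_0-\epsilon,x_0]\bigr).$$
Iterating yields $g_n(x_0)\ge g_1(x_0)\bigl(\alpha\, g_1(x_0)\bigr)^{n-1}$, which is exactly a geometric lower bound with $c=\alpha\,g_1(x_0)$.

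The main (and only) obstacle is to verify that both $g_1(x_0)$ and $\alpha$ are strictly positive for an appropriate choice of $x_0>0$ and $\epsilon>0$, but this is routine. For the first, starting from $x_0>0$, with probability at least $e^{-\beta}\,\P^{x_0}(T_0<1)>0$ the initial particle is absorbed by time~$1$ before any branching, so $g_1(x_0)>0$. For the second, on the positive-probability event that no branching occurs before time~$1$, the position at the first branching time is distributed as a drifted Brownian motion at an independent $\mathrm{Exp}(\beta)$ time, conditioned on staying above~$0$; this distribution admits a strictly positive density on any compact subinterval of $(0,\infty)$, in particular on $[x_0-\epsilon,x_0]$ for $\epsilon>0$ small enough, giving $\alpha>0$.
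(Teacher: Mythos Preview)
Your argument is correct and reaches the same conclusion as the paper---a geometric lower bound on the tail of $\Nhits(\infty)$---but the mechanism is different. The paper constructs explicit events $\mathcal A_n$ (at each integer time exactly one particle is alive in $[x,x+1]$, it branches once, one child is absorbed, the other returns to $[x,x+1]$) to obtain $\P^x(\Nhits(\infty)=n)\ge q^n\,\P^x(\Nhits(\infty)=0)$ directly. You instead set up a recursion at the first branching time and combine it with the stochastic monotonicity $y\mapsto g_n(y)$ non-increasing, which is a slightly slicker and more structural route (and re-uses the coupling behind Lemma~\ref{mono}); the paper's argument is more hands-on but avoids needing monotonicity.

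One expository slip: in the last paragraph, the clause ``on the positive-probability event that no branching occurs before time~1'' is out of place---you are trying to show that $T_b<T_0$ and $B_{T_b}\in[x_0-\epsilon,x_0]$ occur with positive probability, so branching must occur. The intended point is simply that $T_b$ is $\mathrm{Exp}(\beta)$, independent of the Brownian path, and that conditionally on $\{T_b=t,\ T_0>t\}$ the position $B_t$ has a strictly positive density on $(0,\infty)$; hence $\alpha>0$. This is immediate and the argument stands once that sentence is cleaned up.
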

\begin{proof}
For the system started from~$x>0$, consider the following family of
events for $n\in\N$:
$$\mathcal A_n=\begin{cases}
\Nhits(\infty)=n,
	\quad\text{and}\\
\text{for all integers $i\le n$, }\Nhits(i)=i,
	\quad\text{and}\\
\text{for all integers $i\le n$, there is at time~$i$ only one particle
alive and it sits in $[x,x+1]$.}
\end{cases}$$ 
In words, for each $i\in\{0,1,\ldots,n-1\}$ there is one particle alive
at time $i$, it sits in $[x,x+1]$, and during a time interval one,
this particle splits exactly once, one the offspring gets absorbed and the other is
again in $[x,x+1]$ at time $i+1$. The one particle alive at time~$n$
generates a tree drifting to infinity with no more absorbed particles.

Let $\epsilon_{y,z}\diffd z$ be the probability that a particle sitting at
$y$ has, during a time interval one, exactly one splitting event with one
offspring being absorbed and the other one ending up in $\diffd z$. Define
furthermore $$q=\min_{y\in[x,x+1]}\int_x^{x+1} \epsilon_{y,z}\diffd z.$$
$q$ is the minimal probability for a particle sitting somewhere 
in $[x,x+1]$ to have,
during a time interval one, exactly one splitting event with one
offspring being absorbed and the other one ending up in $[x,x+1]$.
It is clear that $q>0$ and that, furthermore,
$$
  \P^x(\Nhits(\infty)=n)
	> \P^x(\mathcal A_n)
	> q^n\P^x(\Nhits(\infty)=0).
$$
This implies that $\E^x[q^{-\Nhits(\infty)}]=\infty$ and that
$s_0\le 1/q<\infty$.
\end{proof}

\begin{lem} $\TW_{s_0}<\infty$.
\end{lem}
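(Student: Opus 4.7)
The plan is to show that $\TW_{s_0}(x) \le s_0$ for every $x \ge 0$, by exploiting the monotonicity statement from Lemma \ref{mono} together with monotone convergence.

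First I would recall that $s \mapsto \TW_s(x)$ is nondecreasing (since $\Nhits(\infty) < \infty$ almost surely in regime~C, the map $s\mapsto s^{\Nhits(\infty)}$ is nondecreasing for $s\ge1$), and that by Lemma \ref{mono}, for any $s \in (1, s_0)$ the function $x \mapsto \TW_s(x)$ is decreasing, converging to $1$. Therefore
$$
\TW_s(x) \le \TW_s(0) = s < s_0 \qquad \text{for all } x\ge0 \text{ and all } s\in(1,s_0).
$$
This uniform bound is the key observation: although $\TW_s$ blows up for $s > s_0$, on $(1, s_0)$ its values on $[0,\infty)$ are controlled by its value at the origin, which is just $s$.

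Now I would apply the monotone convergence theorem to the family $(s^{\Nhits(\infty)})_{1<s<s_0}$. Since $s \mapsto s^{\Nhits(\infty)(\omega)}$ is nondecreasing in $s$ for each $\omega$, and increases to $s_0^{\Nhits(\infty)(\omega)}$ as $s \uparrow s_0$, monotone convergence gives
$$
\TW_{s_0}(x) \;=\; \E^x\!\big[s_0^{\Nhits(\infty)}\big] \;=\; \lim_{s\uparrow s_0}\E^x\!\big[s^{\Nhits(\infty)}\big] \;=\; \lim_{s\uparrow s_0}\TW_s(x) \;\le\; s_0 \;<\; \infty.
$$
This establishes $\TW_{s_0} < \infty$.

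There isn't really a hard obstacle here; the whole argument rests on the fact that, for $s\in(1,s_0)$, the already-established monotonicity forces $\TW_s$ to be bounded by its boundary value $s$, so the a priori bound needed to pass to the limit is free. Had $\TW_s$ been increasing on $(1,s_0)$ one would have had to rule out blow-up by hand, but Lemma \ref{mono} already handled that case. Thus the lemma follows directly from monotone convergence applied to the uniformly bounded family $\{\TW_s\}_{s<s_0}$.
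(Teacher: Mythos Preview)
Your argument is correct and essentially identical to the paper's own proof: both use Lemma~\ref{mono} to bound $\TW_s(x)\le\TW_s(0)=s<s_0$ for $s\in(1,s_0)$ and then pass to the limit $s\uparrow s_0$ via monotone convergence. The only cosmetic difference is that the paper explicitly disposes of the trivial case $s_0=1$ first (recall that $s_0>1$ has not yet been established at this point in the argument), whereas your write-up tacitly assumes the interval $(1,s_0)$ is nonempty.
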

\begin{proof}
If $s_0=1$, this is trivial as $\TW_1=1$. Assume now $s_0>1$ and let
us fix $x>0$. For any $1<s<s_0$, as $\TW_s$ is decreasing, one has
$\TW_s(x)<s<s_0$. This implies that (using the monotone convergence Theorem) $\TW_{s_0}(x)=
\lim_{s\nearrow s_0}\TW_s(x)$ is
finite which entails the result.
\end{proof}

\begin{lem} $\TW_{s_0}'(0)=0$. \label{s_0flat}\end{lem}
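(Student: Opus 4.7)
The plan is to argue by contradiction, exploiting the definition of $s_0$ in~\eqref{defs0} together with Lemma~\ref{lem14}. By Lemma~\ref{mono}, $\TW_{s_0}$ is decreasing (since $s_0>1$), so $\TW_{s_0}'(0)\le0$ automatically. It therefore suffices to rule out the strict inequality $\TW_{s_0}'(0)<0$.

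Assume for contradiction $\TW_{s_0}'(0)<0$. I will produce a function $v$ satisfying the hypotheses of Lemma~\ref{lem14} with $v(0)>s_0$; this immediately contradicts~\eqref{defs0}. The construction uses only that \eqref{Eq:TW_s} is a smooth autonomous second-order ODE: by standard local existence, $\TW_{s_0}$ extends uniquely to a classical solution on some interval $(-\varepsilon,\infty)$, and any spatial translate of a solution is again a solution. For $\delta\in(0,\varepsilon)$ I set
$$v(x) := \TW_{s_0}(x-\delta),\qquad x\ge0.$$
Then $v$ solves the ODE in~\eqref{eqv}, and $v(0)=\TW_{s_0}(-\delta)>\TW_{s_0}(0)=s_0$ for all sufficiently small $\delta>0$, because $\TW_{s_0}'(0)<0$.

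The remaining requirement of Lemma~\ref{lem14} is $v(x)\ge 1$ for all $x\ge0$. For $x\ge\delta$ this is immediate from the already-known bound $\TW_{s_0}\ge1$ on $[0,\infty)$. For $x\in[0,\delta)$ we have $v(x)=\TW_{s_0}(x-\delta)$ with $x-\delta\in[-\delta,0)$, and since $\TW_{s_0}$ is continuous at $0$ with $\TW_{s_0}(0)=s_0>1$, shrinking $\delta$ if necessary guarantees $\TW_{s_0}(y)>1$ on $[-\delta,0]$. Hence $v\ge1$ on $[0,\infty)$, so Lemma~\ref{lem14} yields $\TW_{v(0)}<\infty$ with $v(0)>s_0$, contradicting the definition of $s_0$. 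This forces $\TW_{s_0}'(0)=0$.

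The only mildly delicate point is the backwards extension of $\TW_{s_0}$ across $x=0$, but this is nothing more than local existence for a smooth ODE applied at the point $x=0$ with Cauchy data $(\TW_{s_0}(0),\TW_{s_0}'(0))$; I expect no serious obstacle. One could alternatively avoid any extension by translating in the opposite direction and using the coupling $\TW_s(x+h)=\TW_{\TW_s(h)}(x)$ of Lemma~\ref{lem:coupling}, but the construction above is the cleaner route.
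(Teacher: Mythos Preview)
Your proof is correct and follows essentially the same route as the paper: extend $\TW_{s_0}$ to a short interval left of the origin via local ODE existence, translate, and contradict the maximality of $s_0$ through \eqref{defs0}/Lemma~\ref{lem14}. The only quibble is your parenthetical ``since $s_0>1$'': at this point in the paper $s_0>1$ has not yet been established (its proof actually uses the present lemma), but this is harmless since the case $s_0=1$ gives $\TW_{s_0}\equiv 1$ and the conclusion is trivial.
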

\begin{proof}
We already know that $\TW_{s_0}'(0)\le 0$.
Assuming $\TW_{s_0}'(0)<0$, one could continue the function
$\TW_{s_0}$ to negative arguments using \eqref{eqv}
and one could find a $x_0<0$ such that
$\TW_{s_0}(x_0)>s_0$; then the function  $x\mapsto\TW_{s_0}(x_0+x)$
satisfies \eqref{eqv} with $s>s_0$, which is a contradiction.
\end{proof}

%%%%%%%%%%%%%%%%%%%%%%%%%%%%%%%%%%%%%%%%%%%%%%%%
The proof that $s_0>1$ for all $\mu\ge\sqrt{2\beta}$
is divided into two steps. First, we show that
$s_0>1$ in the critical case $\mu=\sqrt{2\beta}$. Then we conclude by
proving Proposition~\ref{prop:s0}, which states that $s_0$ is an increasing
function of
$\mu/\sqrt\beta$.

\begin{lem}In the critical case $\mu=\sqrt{2\beta}$, for $s>1$ small
enough, there exists solutions to \eqref{eqv}, that is $s_0>1$.
\end{lem}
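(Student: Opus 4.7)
The plan is to exhibit, for each $s>1$ sufficiently close to~$1$, an explicit super-solution to the ODE in~\eqref{eqv}, and then to observe that the argument of Lemma~\ref{lem14} goes through verbatim with a super-solution in place of an exact solution. Since $\TW_s$ itself solves~\eqref{eqv} as soon as it is known to be finite, this is sufficient.

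The ansatz is suggested by linearising at $v=1$: in the critical case $\mu = r := \sqrt{2\beta}$ the characteristic equation $\tfrac12\lambda^2+\mu\lambda+\beta=0$ has a double root at $\lambda=-r$, so I would try
\[
v(x) = 1 + A e^{-rx} - A^2 e^{-2rx}, \qquad 0<A<1.
\]
The term $Ae^{-rx}$ solves the linearised equation exactly, and the $-A^2 e^{-2rx}$ correction is tuned to cancel the quadratic contribution $\beta(v-1)^2\sim\beta A^2 e^{-2rx}$ coming from the nonlinearity. A direct calculation using $\mu=r$ and $r^2=2\beta$ shows that both the $e^{-rx}$ and $e^{-2rx}$ Fourier modes of $\tfrac12 v'' + \mu v' + \beta(v^2-v)$ vanish, leaving
\[
\tfrac12 v'' + \mu v' + \beta(v^2 - v) = -r^2 A^3 e^{-3rx}\Bigl(1 - \tfrac{A}{2}e^{-rx}\Bigr),
\]
which is strictly negative for $x\ge0$ whenever $A<2$. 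Moreover $v(x)-1 = Ae^{-rx}(1-Ae^{-rx})\ge 0$ for $A\in(0,1]$, and $v(0)=1+A-A^2$ takes every value in $(1,5/4]$ as $A$ varies over $(0,1/2]$.

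Setting $s:=v(0)$, the probabilistic proof of Lemma~\ref{lem14} now applies with only a cosmetic change: because $v$ is a super-solution, the process $M_t = \prod_{u\in\Nstop(t)} v(X_u(t))$ is a non-negative supermartingale directly (not merely as a consequence of being a positive local martingale), still satisfies $M_t \ge s^{\Nhits(t)}$, and so Fatou's lemma gives $\TW_s(x) = \E^x[s^{\Nhits(\infty)}] \le M_0 = v(x) < \infty$. Equivalently, applying the parabolic maximum principle to $\pr_s$ in~\eqref{ustx} yields $\pr_s(t,x)\le v(x)$, and the same bound follows by letting $t\to\infty$. Hence $\TW_s<\infty$ for every such $s$, and thus $s_0\ge s >1$. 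The main subtlety I anticipate is the choice of ansatz: the naive guess $v=1+Ae^{-rx}$ is in fact a sub-solution (the leftover term $\beta A^2 e^{-2rx}$ has the wrong sign), and the $-A^2 e^{-2rx}$ correction is the minimal adjustment that reverses this sign and turns $v$ into a genuine super-solution.
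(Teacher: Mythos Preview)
Your proof is correct, but it takes a different route from the paper's own argument. The paper proceeds by constructing an \emph{exact} solution to \eqref{eqv}: it makes the substitution $\ell(x)=e^{\mu x}(v(x)-1)$, which in the critical case reduces the ODE to $\tfrac12\ell''+\beta e^{-\mu x}\ell^2=0$, and then shows by a short bootstrap-contradiction argument that the solution with small initial data $\ell(0)=\ell'(0)=\epsilon$ stays positive for all $x\ge0$. This yields a genuine solution $v$ with $v(0)=1+\epsilon>1$ and $v\ge1$, to which Lemma~\ref{lem14} applies directly.

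Your approach is instead to write down an explicit \emph{super-solution} $v(x)=1+Ae^{-rx}-A^2e^{-2rx}$ and to note that the proof of Lemma~\ref{lem14} (either the product-supermartingale version or the parabolic comparison version) needs only the inequality $\tfrac12 v''+\mu v'+\beta(v^2-v)\le0$, not equality. Your computation of the residual $-r^2A^3e^{-3rx}(1-\tfrac{A}{2}e^{-rx})$ is correct, and so is the observation that $M_t$ is then a non-negative (local, hence genuine) supermartingale dominating $s^{\Nhits(t)}$. Once $\TW_s<\infty$ is established, $\TW_s$ itself furnishes the solution to \eqref{eqv} that the lemma asserts exists. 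Your argument is arguably more explicit and makes the perturbative structure transparent (the $-A^2e^{-2rx}$ term is precisely what kills the leading quadratic error), while the paper's change of variables is a cleaner reduction that produces an exact solution without having to revisit the proof of Lemma~\ref{lem14}.
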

\begin{proof}
Assume $\mu=\sqrt{2\beta}$. After the change of variables 
$\ell(x) := e^{\mu x} \big(v(x)-1\big)$, \eqref{eqv} reads
\begin{equation}\label{ell equation}
\frac12 \ell'' +\beta e^{-\mu x} \ell^2 =0.
\end{equation}

Let us consider the solution to \eqref{ell equation} with
$\ell(0)=\ell'(0)=\epsilon$ for some $\epsilon>0$. We want to prove that
$\forall x, \ell(x)>0$ if $\epsilon$ is small enough.
Assume otherwise and call $x_0
= \inf\{x \ge 0 : \ell(x) =0\}$. Then, as
$\ell''(x)\le0$, we have
$\ell(x)\le\epsilon+x\epsilon$
and thus on $[0,x_0]$ (where $\ell(x)\ge0$),
$$
\ell''(x) \ge -2\beta\epsilon^2(1+ x)^2e^{-\mu x}.
$$
We conclude that
$$
\ell'(x_0) \ge\epsilon -2\beta\epsilon^2 \int_0^{x_0}(1+x)^2e^{-\mu
x}\,\diffd x
\ge \epsilon -2\beta\epsilon^2 \int_0^\infty (1+x)^2 e^{-\mu x}\,\diffd x,
$$
which is strictly positive for $\epsilon$ small enough. This contradicts
the definition of $x_0$  and thus we have found a solution of \eqref{ell
equation} such that $\ell(x)>0$ for all $x\ge 0$. Then $v(x)
= 1+ \ell(x) e^{-\mu x}$ is a solution to \eqref{eqv} started from
$s=1+\epsilon$; in other words $s_0\ge 1+\epsilon$ in the
$\mu=\sqrt{2\beta}$ case.
\end{proof}

We now proceed to prove that $s_0$ is a strictly increasing function of
$\mu/\sqrt{\beta}$.
\begin{proof}[Proof of Proposition~\ref{prop:s0}]
Let us fix $\mu\ge\sqrt{2\beta_1}>\sqrt{2\beta_2}$. One can easily
construct two branching Brownian motions with parameters $(\mu,\beta_1)$
and $(\mu,\beta_2)$ on the same probability space to realise a coupling so
that
the particles of the second one are a subset of the particles of the first
one. It is then clear that for any $s>1$ one has
\begin{equation}
\TW_{s,\beta_1,\mu}(x)\ge\TW_{s,\beta_2,\mu}(x)
\label{softmon}
\end{equation}
(with the obvious extension of notation) so that
\begin{equation}
s_0(\mu/\sqrt{\beta_1})\le s_0(\mu/\sqrt{\beta_2}).
\label{softmon2}
\end{equation}
This already gives
non-strict  monotonicity and concludes the proof that
$s_0>1$ for all $\mu$, $\beta$ with $\mu\ge\sqrt{2\beta}$.

We can now prove that the inequality~\eqref{softmon2} is strict.
Assume otherwise; one would have
$\TW_{s_0,\beta_1,\mu}(0)=
\TW_{s_0,\beta_2,\mu}(0)=s_0$ (where $s_0>1$ would be the common value),
$\TW_{s_0,\beta_1,\mu}'(0)=
\TW_{s_0,\beta_2,\mu}'(0)=0$ (from Lemma~\ref{s_0flat})
and, from \eqref{Eq:TW_s},
$\TW_{s_0,\beta_1,\mu}''(0)=-\beta_1(s_0^2-s_0)
< \TW_{s_0,\beta_2,\mu}''(0)=-\beta_2(s_0^2-s_0)$, which would imply by
Taylor expansion that for $x>0$ small enough
$\TW_{s_0,\beta_1,\mu}(x)<\TW_{s_0,\beta_2,\mu}(x)$ in contradiction
with~\eqref{softmon}.
\end{proof}

Finally, the only remaining point to complete the proof of
Theorem~\ref{main thm: exp moments} is the asymptotic
behaviour~\eqref{maillard stuff}, i.e. the assertion that for $x>0$ fixed
\begin{equation}\label{to be proven (maillard)}
p_n(x):=\P^x[\Nhits(\infty)=n]\sim { -\TWs'(x)  \over  2s_0^{n}
n^{\frac32} \sqrt{\pi\beta(s_0-1)}  }
\qquad\text{as $n\to \infty$}.
\end{equation}

Write $D(z,r)$ for the open disc of the complex plane  with center $z\in
\C$ and radius $r$. We extend the definition of $s\mapsto\TW_s(x)$ to $s\in
\C$:
\begin{equation}\label{pr again}
\TW_s(x) =\E^x[s^{\Nhits(\infty)}]=\sum_{n\ge 0} s^n p_n(x), \quad \forall
s\in\C. 
\end{equation}
This quantity is analytical on $D(0,s_0)$ because the $p_n$ in \eqref{pr again}
are positive and the first singularity on the real axis is at $s_0$.
Furthermore, it is finite on $\overline{D(0,s_0)}$ by uniform convergence
because it is finite at $s_0$.

The arguments we use are extremely close to those used by Maillard in
\cite{maillard}. The key argument, which improves on usual Tauberian
theorems, is an application of \cite[Corollary
VI.1]{flajolet} relying on the analysis of generating
functions near their singular points.  We need  to show that 
\begin{lem}\label{lemma 1 in proof of Malliard}
Fix $x>0$. There exists $r_x>0$ such that   $s\mapsto \TW_s(x)$ is analytical in $V=D(s_0,r_x) \setminus [s_0,\infty),$ and
\begin{equation}\label{flaj}
\partial_s \TW_s(x) \sim { -\TW'_{s_0} (x)   \over  2\sqrt{ \beta (s_0-s) (s_0^2-s_0) }   } \qquad \text{ as } s\to s_0, s\in  V,
\end{equation}
\end{lem}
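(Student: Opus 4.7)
The plan is to realize $\TW_s(x)$ locally near $s=s_0$ as a composition of a fixed analytic function with an analytic function of the variable $\sqrt{s_0-s}$, exhibiting the square-root branch point at $s_0$ explicitly and then reading off \eqref{flaj} via the chain rule.

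The first step is to introduce $U$, the unique solution of the ODE $\tfrac12 U''+\mu U'+\beta(U^2-U)=0$ with initial data $U(0)=s_0$, $U'(0)=0$. Since the ODE is analytic, $U(y)$ is holomorphic in a complex neighbourhood of $0$, and by uniqueness of solutions $U(y)=\TWs(y)$ for all $y\ge 0$; in particular $U$ extends holomorphically to some complex neighbourhood $\mathcal{U}$ of $[0,\infty)$. Evaluating the ODE at $y=0$ together with $U'(0)=0$ (Lemma~\ref{s_0flat}) gives $U''(0)=-2\beta(s_0^2-s_0)<0$, whence the local expansion $U(y)-s_0=-\beta(s_0^2-s_0)\,y^{2}\bigl(1+O(y)\bigr)$. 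A standard square-root local inversion then produces an analytic function $c(s)$, defined on $D(s_0,r)\setminus[s_0,\infty)$ for some $r>0$, of the form $c(s)=\sqrt{(s_0-s)/(\beta(s_0^2-s_0))}\bigl(1+O(\sqrt{s_0-s})\bigr)$, with $c(s)>0$ for real $s<s_0$ near $s_0$ and $U(c(s))=s$. I then set $\tilde\TW_s(x):=U(x+c(s))$; since $x+c(s_0)=x\in[0,\infty)\subset\mathcal{U}$, continuity supplies an $r_x>0$ such that $x+c(s)\in\mathcal{U}$ for all $s\in V:=D(s_0,r_x)\setminus[s_0,\infty)$, so $\tilde\TW_\cdot(x)$ is holomorphic on $V$. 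For real $s\in(1,s_0)$, Theorem~\ref{thm w_s} yields $\TW_s(x)=\TWs(x+\TWs^{-1}(s))=\tilde\TW_s(x)$, which by analytic continuation identifies $\tilde\TW_\cdot(x)$ with the holomorphic extension of $s\mapsto\TW_s(x)$ to $V$.

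For the asymptotic \eqref{flaj}, I differentiate to get $\partial_s\TW_s(x)=U'(x+c(s))\,c'(s)$. As $s\to s_0$ in $V$ one has $c(s)\to0$ and $U'(x+c(s))\to\TWs'(x)$, while $U(c(s))=s$ gives $c'(s)=1/U'(c(s))$; combining $U'(c)=-2\beta(s_0^2-s_0)c+O(c^2)$ with $c(s)\sim\sqrt{(s_0-s)/(\beta(s_0^2-s_0))}$ produces $c'(s)\sim-1/\bigl(2\sqrt{\beta(s_0-s)(s_0^2-s_0)}\bigr)$, and multiplying the two factors reproduces \eqref{flaj}. The main obstacle I anticipate is the careful selection of the correct branch of $\sqrt{s_0-s}$ (namely the one with $c(s)>0$ as $s\nearrow s_0$ along the real axis) and the verification that our explicit $\tilde\TW_\cdot(x)$ coincides with the analytic continuation of the generating function $s\mapsto\sum_{n}p_n(x)s^n$; both reduce to the translation identity of Theorem~\ref{thm w_s} applied on the real interval $(1,s_0)$, where the identity theorem for holomorphic functions then propagates the agreement to all of $V$.
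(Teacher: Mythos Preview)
Your proposal is correct and follows essentially the same approach as the paper: both arguments extend $\TWs$ analytically near the origin, exploit the quadratic zero $\TWs(y)-s_0=-\beta(s_0^2-s_0)y^2(1+O(y))$ to build a square-root local inverse (your $c(s)$ is the paper's $\psi^{-1}\big(\sqrt{(s_0-s)/(\beta(s_0^2-s_0))}\big)$), invoke the shift identity $\TW_s(x)=\TWs(x+\TWs^{-1}(s))$ on the real interval and propagate it by analytic continuation, and then differentiate in $s$ via the chain rule. The only cosmetic differences are that the paper cites Rudin's factorization theorem for the square-root step and uses Lemma~\ref{lem:coupling} directly rather than its reformulation Theorem~\ref{thm w_s}.
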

and that
\begin{lem}\label{2nd lemma for maillard}
Fix $x>0$. There exists $\epsilon>0$ such that $s\mapsto \TW_s(x)$ is analytical on $D(0,s_0+\epsilon)\setminus [s_0,\infty)$.
\end{lem}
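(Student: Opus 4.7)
The plan is to use the expansion from Proposition~\ref{thm:solutionExpansion}, $\TW_s(x) = 1 - \Phi(B_s e^{-rx})$, to reduce the claim to the analyticity of the implicit function $s \mapsto B_s$ on $D(0, s_0 + \epsilon) \setminus [s_0, \infty)$, with $\epsilon$ chosen small enough that $|B_s| e^{-rx} < \mathcal R$ is maintained so that $\Phi$ can be safely applied. The defining equation is $\Phi(B_s) = 1 - s$.

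The first step is to identify $B_{s_0}$ as a nondegenerate critical point of $\Phi$. Differentiating $\TW_s(x) = 1 - \Phi(B_s e^{-rx})$ in $x$ at $x=0$ yields
\[
\TW_s'(0) = rB_s\Phi'(B_s), \qquad \TW_s''(0) = -r^2 B_s \Phi'(B_s) - r^2 B_s^2 \Phi''(B_s).
\]
At $s = s_0$, Theorem~\ref{main thm: exp moments} gives $\TW_{s_0}'(0) = 0$, whence $\Phi'(B_{s_0}) = 0$ since $B_{s_0} \neq 0$. The ODE~\eqref{E:TWs} evaluated at $x = 0$ gives $\TW_{s_0}''(0) = -2\beta(s_0^2 - s_0) < 0$, which forces $\Phi''(B_{s_0}) > 0$. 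Moreover, for $s \in [0, s_0)$, the strict monotonicity of $\TW_s$ established in Lemma~\ref{mono} together with the relation $\TW_s'(0) = rB_s\Phi'(B_s)$ (and $\Phi'(0) = a_1 = 1$ handling the case $s=1$, $B_1 = 0$) ensures $\Phi'(B_s) \neq 0$.

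With $B_{s_0}$ a simple critical point, the analytic Morse lemma yields a biholomorphism $\psi$ from a neighborhood of $B_{s_0}$ onto a neighborhood of $0$ with $\Phi(B) - \Phi(B_{s_0}) = \psi(B)^2$. The equation $\Phi(B_s) = 1 - s$ becomes $\psi(B_s)^2 = s_0 - s$, and fixing the branch of $\sqrt{s_0 - s}$ analytic on $\mathbb{C} \setminus [s_0, \infty)$ with sign pinned down by the real values on $[B_{s_0}, B_0]$ for $s$ just below $s_0$ yields $B_s$ analytic on a slit disc $D(s_0, r_0) \setminus [s_0, \infty)$ for some $r_0 > 0$.

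To complete the proof I would patch these pieces together with the analyticity of $B_s$ on (a complex neighborhood of) $D(0, s_0)$ obtained from the power series $\TW_s(x) = \sum_n p_n(x) s^n$ and the implicit function theorem away from $B_{s_0}$. The main obstacle is to extend $B_s$ across each point $s^* \neq s_0$ of the boundary circle $|s| = s_0$: one follows the analytic continuation of $B_s$ along a path in $D(0, s_0 + \epsilon) \setminus [s_0, \infty)$ starting from the interior of $D(0, s_0)$, which is unobstructed as long as $\Phi'(B_s) \neq 0$ throughout. This requires an argument that, for $\epsilon$ small enough, the continued values $B_s$ remain in a complex neighborhood of $[B_{s_0}, B_0]$ on which $B_{s_0}$ is the \emph{only} critical point of $\Phi$; a quantitative bound exploiting the recursion \eqref{eq:expansionCoefficients} that defines the $a_n$'s suffices. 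Since $D(0, s_0 + \epsilon) \setminus [s_0, \infty)$ is simply connected, the analytic continuation is then single-valued, yielding the claim.
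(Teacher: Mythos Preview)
Your proposal has the right overall shape near $s_0$ --- the Morse-lemma analysis there is essentially the content of Lemma~\ref{lemma 1 in proof of Malliard}, rephrased in terms of $\Phi$ and $B_s$ --- but the heart of Lemma~\ref{2nd lemma for maillard} is the extension across the boundary points $s^*\in\partial D(0,s_0)\setminus\{s_0\}$, and there your argument has a genuine gap.

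First, a technical point: your computations at $x=0$, in particular $\TW_{s_0}'(0)=rB_{s_0}\Phi'(B_{s_0})$, presuppose that the series representation $\TW_s(x)=1-\Phi(B_se^{-rx})$ is valid at $x=0$, i.e.\ that $|B_{s_0}|<\mathcal R$. The paper explicitly states after Proposition~\ref{thm:solutionExpansion} that this is observed numerically but \emph{not proved} for general $\mu\ge\sqrt{2\beta}$ (Section~\ref{furthers0} establishes it only for small $p=2\beta/r^2$). So as written, your identification of $B_{s_0}$ as a critical point of $\Phi$ within its disc of convergence is not justified.

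Second, and more importantly, the step ``the continued values $B_s$ remain in a complex neighborhood of $[B_{s_0},B_0]$ on which $B_{s_0}$ is the only critical point of $\Phi$; a quantitative bound exploiting the recursion suffices'' is exactly the substance of the lemma, and you have not supplied it. Note that $\Phi'(B_s)\ne0$ is equivalent (via $\TW_s'(0)=rB_s\Phi'(B_s)$, granting the representation) to $a(s):=\TW_s'(0)\ne0$. The paper proves $a(s^*)\ne0$ for every $s^*\in\partial D(0,s_0)\setminus\{s_0\}$ not by any estimate on the recursion for the $a_n$, but by a sign/Pringsheim argument on the \emph{probabilistic} coefficients: writing $a(s)=\sum_n p_n'(0)s^n$, one has $p_1'(0)\le0$ and $p_n'(0)\ge0$ for $n\ne1$ (from the boundary values $p_1(0)=1$, $p_n(0)=0$), and since $a(s_0)=0$ the two pieces have equal modulus at $s_0$; a strict triangle inequality then forces $a(s^*)\ne0$ for $s^*\ne s_0$ on the circle. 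This argument is short and uses the probabilistic meaning of $\TW_s$ in an essential way; it is not clear that a purely analytic bound on the zeros of $\Phi'$ derived from~\eqref{eq:expansionCoefficients} would give the same conclusion uniformly in $\mu$. The paper then transfers analyticity from $a(s)$ to $\TW_s(x)$ via the relation $\TW_s'(x)=a(\TW_s(x))=a(s)\partial_s\TW_s(x)$ and a second application of Maillard's Fact~5.2, bypassing $\Phi$ and $B_s$ entirely.
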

Then, applying Corollary VI.1 in \cite{flajolet} to $s\mapsto \partial_s
\TW_s(x)$, Lemmas~\ref{lemma 1 in proof of Malliard} and~\ref{2nd lemma for
maillard} lead to
\begin{equation}\label{expansion derivative}
(n+1) p_{n+1}(x)  \sim { -\TWs'(x)  \over  2s_0^{n+1}
n^{\frac12} \sqrt{\pi\beta(s_0-1)}  }, \quad \text{ as } n \to \infty,
\end{equation}
which obviously implies  \eqref{to be proven (maillard)}.

\begin{proof}[Proof of Lemma~\ref{lemma 1 in proof of Malliard}]
We know that $\TW'_{s_0}(0)=0$ and $ \TW''_{s_0}(0)=-2\beta (s_0^2-s_0)<0.$ Since $\TW_{s_0}$ solves the KPP traveling wave differential equation, for each $x\ge 0$ we can extend $z \mapsto \TW_{s_0}(x+z)$ analytically on a neighborhood of zero in $\C$ (see e.g. \cite{rudin}). In particular for $x=0$ we have
the following expansion:
\begin{equation}\label{complex expansion}
\TWs (z) =s_0+ { \TWs''(0)\over 2}z^2 +o(z^2) \quad \text{ as } z\to 0.
\end{equation}
The function $\TWs$ is analytic and zero is a zero of order two of
$\TWs(z)-s_0$, by Theorem 10.32 of \cite{rudin}  there exists $r_1>0$ and
a function $\psi$ analytic and invertible on $D(0,r_1)$  such that
\begin{equation}\label{Mean value thm}
\TWs(z) =s_0 +{ \TWs''(0)\over 2}\psi(z)^2.
\end{equation}
This means that 
\begin{equation}\label{j'ai plus de label}
z=\psi^{-1} \left(\sqrt{\frac{\TWs(z) - s_0}{\TWs''(0)/2}}\right)
             = \psi^{-1} \left(\sqrt{\frac{s_0 - \TWs(z)}{\beta(s_0^2-s_0)}}\right) 
\end{equation}
for any $z$ in $D(0,r_1)$ such that $\TWs(z) \not \in (s_0,\infty)$ (so that
the right-hand side is well defined and is analytic on this domain when using the standard definition of the complex  square root).

Recall from Lemma \ref{lem:coupling} that for any  non-negative real $x$ and $z$ one has
\begin{equation}\label{Lemma 11 again}
\TW_{\TWs(z)}(x) =  \TWs(z+x).
\end{equation}
Replace the $z$ in the right-hand side by its expression \eqref{j'ai plus de label} and write $\TWs(z)$ as $s$ to obtain
\begin{equation}\label{nesting conseq}
\TW_{s}(x) =  \TWs \left(\psi^{-1} \left(   \sqrt{{ s_0-s \over \beta(s_0^2-s_0)  }}\right) +x\right)
\end{equation}
for  $s \in \TWs \big( [0,r_1) \big) =(s_0- r_2,s_0]$ for some $r_2>0$. But \eqref{nesting conseq} is an equality between analytical functions as long as $s \in D(s_0,r_x) \setminus [s_0,
\infty)$ for some $r_x>0$ small enough (one must have $D(s_0,r_x) \subset
\TWs\big( D(0,r_1)\big)$ for $\psi^{-1}$ to be analytical, which is possible
by the open mapping Theorem, and one must have $ \psi^{-1}(\ldots)$ small
enough for $\TW_{s_0}$ to be also analytical). From the analytical
continuation principle,  \eqref{nesting conseq} must hold on the whole
$D(s_0,r_x) \setminus [s_0,\infty)$ domain. Now differentiate with respect to
$s$ to get
\begin{equation}\label{key point}
\partial_s \TW_s(x) = -{ (\psi^{-1})'  \left( \sqrt{{ s_0-s \over \beta (s_0^2-s_0)}} \right) \over 2\sqrt{\beta(s_0-s)(s_0^2-s_0)}} \TWs'\left( \psi^{-1} \left( \sqrt{ { s_0-s \over \beta(s_0^2-s_0) } }\right) +x   \right),
\end{equation}
yielding
\begin{equation}\label{key point2}
\partial_s \TW_s(x) \sim -{ (\psi^{-1})'  (0) \over 2\sqrt{\beta(s_0-s)(s_0^2-s_0)}} \TWs'  (x) \quad \text{ as } s\to s_0 \text{ in } D(s_0, r_x)\setminus [s_0,\infty).
\end{equation}
A straightforward computation shows that $(\psi^{-1})'(0)=1,$ which concludes the proof.
\end{proof}

\begin{proof}[Proof of Lemma \ref{2nd lemma for maillard}]
$\TW_s(x)$ is already analytical on $D(0,s_0)$. To prove the Lemma it is
sufficient to show that it can be analytically extended around any point
$s\in\partial D(0,s_0)\setminus\{s_0\}$. Indeed, by the finite covering
property of compacts one can then show analycity on a open containing the
compact $\partial D(0,s_0)\setminus D(s_0,r_x/2)$ with $r_x$ defined in
Lemma~\ref{lemma 1 in proof of Malliard}, and then we conclude with the
help of Lemma~\ref{lemma 1 in proof of Malliard}.

So it now remains to see why $s\mapsto \TW_s(x)$ can be analytically
extended to neighboorhoods of any $s\ne s_0$ with $|s|=s_0$.
This is essentially the content of Lemma~6.2 in \cite{maillard}.
As in Maillard, we define
\begin{equation}
a(s):= \TW'_s(0),
\end{equation}
where we recall that the prime is a derivatice with respect to~$x$. We
first show analycity of $a(s)$ on $D(0,s_0)$ by writing an integral
representation of $a(s)$: multiply \eqref{E:TWs} by
$\exp\big[(\mu-\sqrt{\mu^2+2\beta}) x\big]$
and integrate on $x\in[0,\infty)$. Integrate several times by
part to get rid of the derivatives of $\TW_s$; one is left with
\begin{equation}
a(s)=\big(\mu-\sqrt{\mu^2+2\beta}\big)s+2\beta\int_0^\infty\diffd x\,
\TW_s(x)^2e^{\big(\mu-\sqrt{\mu^2+2\beta}\big)x}.
\label{aint}
\end{equation}
For any $x\ge0$ and $s\in\overline{D(0,s_0)}$ 
one has $|\TW_s(x)|\le \TW_{s_0}(x) \le s_0$. This implies that the
convergence for $x$ close to infinity of the integral 
in~\eqref{aint} is uniform on the disk $s\in\overline{D(0,s_0)}$.
As $s\mapsto\TW_s(x)$ is analytical on $D(0,s_0)$, this is sufficient to
ensure that $s\mapsto a(s)$ is also analytical on $D(0,s_0)$.
Furthermore, notice that the series~\eqref{pr again} defining $\TW_s(x)$
converges uniformly on $s\in\overline{D(0,s_0)}$ because we know it
converges absolutely (all the $p_n(x)$ are non-negative) at $s=s_0$. 
This implies that $s\mapsto \TW_s(x)$ is continuous on
$\overline{D(0,s_0)}$ and, from the expression~\eqref{aint}, so is
$s\mapsto a(s)$ (by dominated convergence Theorem since $|\TW_s(x)|\le s_0$ on the closed disc).

We proceed to show that $a(s)$ can be extended analytically around any
point $s\ne s_0$ with $|s|= s_0$ and show that the property extends to
$\TW_s(x)$.
%With the help of~\eqref{pr again} one can rewrite
%\begin{equation}
%a(s) = \sum_n p_n'(0) s^n,
%\end{equation}
%for all $s$ in the radius of convergence of the above Series.
%\nota{Here there is an argument missing to say that the first singularity
%of $a(s)$ is at $s_0$, so that the radius of convergence is $s_0$, and that
%furthermore the Series converge at $s=s_0$. Note that we know that the
%function is well defined for $s\in[0,s_0]$, but is it equal to its Series
%?}
%Note that $p_n'(0)>0$ for $n\ne1$ and $p_1'(0)<0$.

In Lemma~\ref{lem:coupling} we showed for any $s\in[0,s_0]$ and any $x\ge0$
and $h\ge0$ one had
\begin{equation}
\TW_s(x+h) = \TW_{\TW_s(h)}(x).
\label{lem11again}
\end{equation}
One can check that the proof of Lemma~\ref{lem:coupling} extends to
complex~$s$ so that~\eqref{lem11again} remains valid for any $s\in C$ such
that $\TW_s(x)$ is finite.

For fixed (complex) $s$, by deriving~\eqref{lem11again} with respect to $h$
and then setting $h=0$, one gets
\begin{equation}
\TW'_s(x) = a(s) \partial_s \TW_s(x).
\label{kolmo}
\end{equation}
Derive again with respect to $x$, and then set $x=0$:
\begin{equation}
\TW''_s(0) = a(s) \partial_s a(s),
\end{equation}
so that the differential equation~\eqref{Eq:TW_s} on $x\mapsto \TW_s(x)$
applied at $x=0$ gives
\begin{equation}
0=\frac12a(s)\partial_s a(s) + \mu a(s)+\beta(s^2-s),
\label{equ 3.5 maillard}
\end{equation}
which is equation~(3.5) in \cite{maillard}. 
This equation is valid for all $s\in D(0,s_0)$.

% It implies in particular
%that the only possible values in $D(0,s_0)$ where $a(s)=0$ are 0 and 1.
%Indeed, $a(1)=0$ (because $\TW_1(x)=1$), but $a(0)\ne0$. We also know that
%$a(s_0)=0$.

We now use the Fact~5.2
in~\cite{maillard}, which we reproduce here with some change of notation
for clarity:
\begin{quote}
\emph{Fact~5.2 in~\cite{maillard}}\quad
Let $H$ be a region in $\C$ and $s\mapsto \phi(s)$ analytic in $H$. Let $G$ be
a region in $\C^2$ such that $(\phi(s), s) \in G$ for each $s\in H$ and
suppose that there exists an analytic function $f: G\to \C$  such that 
$$
\phi'(s) =f\big(\phi(s),s\big), \quad \forall s\in H.
$$
Let $s^* \in \partial H.$ Suppose $\phi(s) $ is continuous at $s^*$ and that
$(\phi(s^*),s^*) \in G$. Then $s^*$ is a regular point of $\phi(s)$
i.e.\@ $\phi(s)$ admits an analytic continuation at $s^*$.
\end{quote}
We apply to our case with $\phi=a$,
$H=D(0,s_0)\setminus\{1\}$ and $G=\C^*\times \C$.
From~\eqref{equ 3.5 maillard}, the only candidate values of $s$ in
$D(0,s_0)$ such that $a(s)=0$ are 0 and 1, and we know that $a(0)>0$, so
the condition ``$(a(s), s) \in G$ for each $s\in H$'' is verified.
The function $f(a,s)$ is obtained
from~\eqref{equ 3.5 maillard}:
$f(a,s)= -2\mu +2(s-s^2)/a$, and is obviously analytical on~$G$;
we have already shown that $a(s)$ is
continuous on $\overline{ D(0,s_0)}$. 
Therefore, for any point $s^*\in\partial D(0,s_0)$ such that $a(s^*)\ne0$
(because we want $(a(s^*),s^*)\in G$), the function $a(s)$ admits an
analytic continuation at $s^*$. We know that $a(s_0)=0$, and we prove now
that one has $a(s^*)\ne0$ for any $s^*\in\partial D(0,s_0)\setminus\{s_0\}$,
which will conclude the proof that $a(s)$ can be analytically continued around any
point in $\partial D(0,s_0)\setminus\{s_0\}$. From~\eqref{pr again} one can
write $a(s)$ as a series:
\begin{equation}
a(s)=\sum_n p_n'(0) s^n.
\end{equation}
We know that $p_1'(0)\le0$ (because $p_1(0)=1$ and $p_1(x>0)<1$) and that
for $n\ne1$, $p_n'(0)\ge0$ (because $p_n(0)=0$ and $p_n(x>0)>0$). Since
$a(s_0)=0$, we write
\begin{equation}
\sum_{n\ne1} p_n'(0) s_0^n = -p_1'(0) s_0.
\end{equation}
All the terms on the left hand side are non-negative and infinitely many of them are non-zero since \eqref{equ 3.5 maillard} does not have polynomial solutions. Thus, for any $s^*\in\partial
D(0,s_0)\setminus\{s_0\}$ one has
%\nota{Is that Lemma XV.2.3 of Feller p.475 ?}
\begin{equation}
\Big|\sum_{n\ne1} p_n'(0)(s^*) ^n \Big| < \sum_{n\ne1} p_n'(0) s_0^n .
\end{equation}
In particular, $a(s^*)\ne0$ because it is the sum of two terms (the
$\sum_{n\ne1}$ and the term $n=1$) with different moduli and $s\mapsto
a(s)$ can be extended analytically around $s^*$.

We now show how the analycity of $a(s)$ translates into analycity of
$\TW_s(x)$. First
derive~\eqref{lem11again} again but this time with respect to $x$, then set
$x=0$, and rename $h$ into $x$ to obtain
\begin{equation}
\TW_s'(x) = a\big[\TW_s(x)\big] = a(s) \partial_s \TW_s(x),
\label{kolmo2}
\end{equation}
where we used~\eqref{kolmo} for the second equality.

For each
given $s^*\in\partial D(0,s_0)\setminus\{s_0\}$ we consider a neibourhood
V of $s^*$ where $a(s)$ is
analytical and we apply again Fact~5.2 in
\cite{maillard} to prove that $s\mapsto \TW_s(x)$ is also analytical around
$s^*$. This time, we take $\phi(s)=\TW_s(x)$ and $f(\TW,s)=a(\TW)/a(s)$
from~\eqref{kolmo2}. We pick $H=D(0,s_0)\setminus\{1\}$ and 
$G=D(0,s_0)\times (H\cup V)$. For any
$s\in\overline{D(0,s_0)}\setminus\{s_0\}$ one has
$|\TW_s(x)|< \TW_{s_0}(x)\le s_0$ so that the condition
``$(\phi(s),s)\in G$ for each $s\in H$'' is satisfied. We have already
shown that $s\mapsto\TW_s(x)$ is continuous on $\overline{D(0,s_0)}$, so we
conclude that $s^*$ is a regular point of $s\mapsto \TW_s(x)$.

\end{proof}

%%%%%%%%%

%We define $D= D(0,s_0)$ and $D_\delta=D(0,s_0+\delta).$
%\begin{equation}\label{regions}
%\begin{cases}  
%G(\phi,r) = \{z \in D(s_0,r) \backslash \{s_0\} : |\arg (s_0-z)| <\pi-\phi \}, \\
%\Delta(\phi,r) = \{z \in D(0,s_0+r) \backslash \{s_0\} : |\arg (s_0-z)| <\pi-\phi \}.
%\end{cases}
%\end{equation}

%%%%%%%%%%%%%%%%%%%%%%%%%%%%%%%%%%%%%%%%%%%%%%%%%%%%%%%
%jb end subsection on Maillard
%%%%%%%%%%%%%%%%%%%%%%%%%%%%%%%%%%%%%%%%%%%%%%%%%%%%%%%

\subsection{Proofs concerning $\TW(x)$}
\label{Proofsomega}

In this section we consider exclusively regime~C ($\mu\ge\sqrt{2\beta}$)
and prove our results concerning the properties of $\TW$ and $\TW_s$.

\subsubsection{Proof of Theorem~\ref{thm2}}
\label{sec:maxprinc}

We need to prove that $\TW$ is the maximal solution remaining below~1 of
the differential equation~\eqref{E:TW}. This is an elementary application
of the maximum principle again. Suppose that $v$ is any solution of
\eqref{E:TW} which stays below 1. Since $v$
is a {\it standing wave} solution of \eqref{KPP}, that is
$\tpr(t,x)=v(x)$ for all $t\ge 0$ is a solution of
$$
\partial_t \tpr  =\frac12 \partial_{xx} \tpr +\mu \partial_x \tpr+\beta(\tpr^2-\tpr),
$$
and since $v(x) <1$ for all $x\ge 0$ we have that 
$$
v(x) \le \pr(t,x), \qquad \forall t\ge 0, \forall x\ge 0
$$
where $\pr$ is the solution of \eqref{KPP}. As for each $x$ we know that
$t\mapsto \pr(t,x)  \searrow \TW(x)$ we conclude that $\TW(x)\ge v(x)$
and therefore $\TW$ is the maximal solution of \eqref{E:TW} bounded by 1. The same argument is easily generalized to the case of an arbitrary value of $s \in  [0,s_0]$.

\subsubsection{Proof of the martingale representation, Theorem~\ref{Thm:
martingale representation}}
\label{S:proof of mg representation}

We start by proving Lemma \ref{L:conv of Z}, i.e.\@ that the martingale $(\Zstop(t), t\ge 0)$ converges $\P$-almost surely and in $L^1$ to $\Nhits(\infty)$ and therefore that
$\E^x[\Nhits(\infty)] =e^{-rx}.$

\begin{proof}[Proof of Lemma~\ref{L:conv of Z}]

Recall that by \eqref{def of Z}
\begin{equation}
\Zstop(t) := \sum_{u \in  \Nstop(t)} e^{-r X_u(t)} ,
 \qquad \Zall(t) := \sum_{u \in  \Nall(t)}e^{-r X_u(t)},
\end{equation}
are positive martingales which therefore converge $\P$-almost surely to
their respective limits
$\Zstop$ and $\Zall$. Furthermore, as $\Zall(t)$ is the usual additive
martingale with parameter $r\ge\sqrt{2\beta}$, one has $\Zall=0$.
As the bounds
$$
\Nhits(t) \le \Zstop(t)  \le \Nhits (t) + \Zall(t)
$$
always hold, it is clear that $\Zstop = \Nhits(\infty) $.
The only thing left is to show that the convergence also holds in
$L^1$.

We start by recalling the description of the measure $\Q^x|_{\cF_t}$ which is defined by
$$
\frac{\diffd \Q^x }{\diffd\P^x} \Big|_{\cF_t} =\frac{\Zstop(t)}{
\Zstop(0)}.
$$
Standard arguments (see
\cite{harrisharris}) allow us to conclude that under $\Q^x$  the process
behaves as follows: for $t\ge0$, there is a distinguished line of descent (the
\textit{spine}) denoted $\xi(t)\in
\Nstop(t)$.
Under $\Q^x$ the particle $\xi$ moves according to a Brownian motion with
drift $-\sqrt{\mu^2-2\beta}$ and therefore almost surely hits 0 in finite
time; we call $\tau_\xi =\inf\{t \ge 0 : X_{\xi(t)}(t)=0\}$ the time at
which it reaches~0. For $t<\tau_\xi$, the spine branches at rate $2\beta$
creating non-spine particles which start new independent branching
Brownian motion  behaving according to the usual $\P$ law. After
$\tau_\xi$, the spine particle is frozen at zero (no motion, no branching). Observe that $\Q^x$ is actually the projection of the measure just described since under $\Q^x$ we do not know which is the spine particle $\xi.$ 

To prove the $L^1$ convergence  of $\Zstop(t)$ towards its limit $\Zstop:= \lim_t \Zstop(t)$, it is sufficient to show that
$$
\Q^x(\Zstop <\infty) =1.
$$

As the time $\tau_\xi$ at which the spine is absorbed
at 0 is $\Q$-almost surely finite, there are only finitely many
branching events from the spine $\Q$-almost surely as well.
At each of these events, a
non-spine particle $u$ starts its  own independent $\P$ branching
Brownian motion and we call
$\Nhits_u(\infty)$
the total number of particles frozen at 0 that are descended from $u$.
Let
us also call $\Zstop^{(u)}$ the analogue of the limit $\Zstop$ (but we sum only on
particles descended from $u$) and $ \Zall^{(u)}$ is the same as
$\Zstop^{(u)}$ but without any absorption or freezing at 0. 
It is clear as above that 
$$
\Nhits_u(\infty)\le \Zstop^{(u)} \le \Nhits_u(\infty) + \Zall^{(u)}.
$$
and that $\Zall^{(u)}=0$.
We conclude that $\Zstop^{(u)} =\Nhits_u(\infty) <\infty$,
$\Q$-almost surely, and finally 
$$
\Zstop = \Nhits(\infty) <\infty \quad \text{$\Q$-almost surely.} 
$$
Observe that since $\Nhits(\infty)\ge 1$ almost surely under $\Q$,
we have $\Q \sim \P$.
Thus we know that under $\P$ $
\Zstop(t) \to \Zstop = \Nhits(\infty)
$
in $L^1$. Hence, $\E^x(\Nhits(\infty) ) =\Zstop(0) =e^{-r x}.$
\end{proof}

We now move to the proof of Theorem \ref{Thm: martingale
 representation}.  
\begin{lem}
Recall $\TW(x) =\P^x(\Nhits(\infty)=0)$ and $\TW_s(x) :=
\E^x[s^{\Nhits(\infty)}]$ for $0<s\le s_0$  as usual.
Then $$1-\TW(x) =\Q^x\Big[
\frac1{\Nhits(\infty)} \Big] e^{-r x}$$
and
$$1-\TW_s(x)=\Q^x\Big[ \frac{1-s^{\Nhits(\infty)}}{\Nhits(\infty)} \Big]
e^{-r x}.$$
\label{18}
\end{lem}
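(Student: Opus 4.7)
The approach is to read off both identities as a direct consequence of the Girsanov-type change of measure $\tfrac{\diffd\Q^x}{\diffd\P^x}=e^{rx}\Nhits(\infty)$, combined with the fact (already established in the proof of Lemma~\ref{L:conv of Z}) that $\Nhits(\infty)\ge 1$ $\Q^x$-almost surely.

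\medskip

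First, I would record the inverse change of measure. Since $\Nhits(\infty)\ge 1$ $\Q^x$-a.s., the measure $\Q^x$ is equivalent to $\P^x$ restricted to the event $\{\Nhits(\infty)\ge 1\}$, and on this event
\[
\frac{\diffd \P^x}{\diffd \Q^x}=\frac{e^{-rx}}{\Nhits(\infty)}.
\]
Consequently, for any $\cF_\infty$-measurable functional $G$ that vanishes on $\{\Nhits(\infty)=0\}$ and for which the expectation below is well defined,
\begin{equation}\label{planchange}
\E^x[G]=\Q^x\!\left[G\cdot\frac{e^{-rx}}{\Nhits(\infty)}\right]=e^{-rx}\,\Q^x\!\left[\frac{G}{\Nhits(\infty)}\right].
\end{equation}

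\medskip

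For the first formula I would apply \eqref{planchange} with $G=\indic{\Nhits(\infty)\ge 1}$: the left-hand side equals $\P^x(\Nhits(\infty)\ge 1)=1-\TW(x)$, and since $\Nhits(\infty)\ge 1$ $\Q^x$-a.s., the indicator on the right collapses to~$1$, giving
\[
1-\TW(x)=e^{-rx}\,\Q^x\!\left[\frac{1}{\Nhits(\infty)}\right].
\]
For the second formula I would apply \eqref{planchange} with $G=1-s^{\Nhits(\infty)}$, which indeed vanishes when $\Nhits(\infty)=0$. For $s\in[0,s_0]$ Theorem~\ref{main thm: exp moments} tells us $\TW_s(x)=\E^x[s^{\Nhits(\infty)}]$ is finite, so $G$ is $\P^x$-integrable and the left-hand side of \eqref{planchange} becomes $1-\TW_s(x)$, yielding
\[
1-\TW_s(x)=e^{-rx}\,\Q^x\!\left[\frac{1-s^{\Nhits(\infty)}}{\Nhits(\infty)}\right].
\]

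\medskip

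There is no serious obstacle: the only points that need a line of care are (i) the $L^1$-convergence of $\Zstop(t)\to\Nhits(\infty)$ under $\P^x$, which justifies the normalization $\E^x[\Nhits(\infty)]=e^{-rx}$ and hence legitimizes $\Q^x$ as a probability measure (this is Lemma~\ref{L:conv of Z}), and (ii) the fact that $\Nhits(\infty)\ge 1$ holds $\Q^x$-a.s., established in the spine decomposition of the proof of that same lemma (the spine is absorbed at~$0$ in finite time, contributing at least one frozen particle). With these two ingredients the identity \eqref{planchange} is immediate, and both displayed formulas follow by plugging in the respective test functionals.
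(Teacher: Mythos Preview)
Your proof is correct and follows essentially the same route as the paper: both arguments exploit the change of measure $\diffd\Q^x/\diffd\P^x=e^{rx}\Nhits(\infty)$ directly, insert the indicator $\{\Nhits(\infty)\ge1\}$ (harmless since $1-s^{\Nhits(\infty)}$ vanishes otherwise), and read off the result. The only cosmetic difference is that the paper observes the first identity is the case $s=0$ of the second, whereas you treat them separately.
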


\begin{proof}
As $\Nhits(\infty)>0$ $\Q^x$-almost surely, it is sufficient to prove the second assertion. Using that $\Nhits(\infty) = \Zstop$ $\P^x$-almost surely,
\begin{align*}
1-\TW_s(x) &=\P^x[1-s^{\Nhits(\infty)}] = \P^x[(1-s^{\Nhits(\infty)}) \indic{ \Zstop>0}]\\
&= \P^x \big[ (1-s^{\Nhits(\infty)}) \frac{\Zstop(0) }{\Zstop}\frac{\Zstop}{ \Zstop(0)}   \indic{ \Zstop>0} \big] \\
&= \Q^x \big[\frac{\Zstop(0)}{ \Zstop}(1-s^{\Nhits(\infty)}) \indic{ \Zstop>0} \big] \\
&=\Q^x\big[ \frac{1-s^{\Nhits(\infty)}}{\Nhits(\infty)} \big] e^{-r x}.
\end{align*}
\end{proof}

Since we already know from Proposition \ref{prop3} (its proof is analytical and is independent of the present discussion) that $(1-\TW_s(x))e^{rx}$ tends to a constant $B>0$, it is now clear that the $\Q^x$ expectations in Lemma~\ref{18} also 
converge to $B$ as $x\to\infty$. However, we are now going to define $\Q^{\infty}$ as the law of the process under which we can couple all the $\Q^x$ together and interpret the limit constant $B$ as the expectation of a limit variable under $\Q^\infty$. Loosely speaking, we want $\Q^{\infty}$ to be the law of the process where the spine particle starts at $x=+\infty$  before drifting to 0. In fact it is easier to reverse time and have the spine start at 0 and drift to $+\infty.$

We start with the
following Lemma:
\begin{lem}\label{coupling}
Let $(Y(t), t\ge0)$ be a Brownian motion with drift $+\sqrt{\mu^2-2\beta}$,
started from 0 conditioned to never hit 0. Otherwise said, $Y$ is solution
of the following stochastic differential equation 
$$
\begin{cases}
\diffd Y(t) =\diffd B_t +\sqrt{\mu^2-2\beta}\coth \big( \sqrt{\mu^2-2\beta}
\, Y(t)\big)\diffd t \qquad \text{ if } \mu >\sqrt{2\beta} \\
\diffd Y(t) =\diffd B_t +{1\over Y(t) } \diffd t \qquad \text{ if } \mu =\sqrt{2\beta} .
\end{cases}
$$
Let $(t_i)$ be a Poisson point process on $\R^+$ with intensity $2\beta$.
For each $i\ge 0$ start a branching Brownian motion with law $\P^{Y(t_i)}$
and call $\tilde\Nhits_i$ the total number of absorbed particles  at 0 for
this process.

Fix $x>0$, then the distribution of the variable $\Nhits(\infty)$  under
$\Q^x$ is the same as that of 
$$
\Nhits^x(\infty):=1+\sum_{i : t_i \le \tau_x} \tilde\Nhits_i 
$$ 
under $\Q^\infty$ where $\tau_x := \sup_{t \ge 0} \{ Y(t)=x\}$.
\end{lem}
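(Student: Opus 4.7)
The plan is to combine the spine description of $\Q^x$ with a classical time-reversal identity for the spine trajectory, and then use the invariance of Poisson point processes under time reversal to match up the branching events.

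First, I would unpack $\Nhits(\infty)$ under $\Q^x$ using the spine decomposition recalled in the proof of Lemma~\ref{L:conv of Z}. The distinguished particle $\xi$ moves as a Brownian motion with drift $-\sqrt{\mu^2-2\beta}$ from $x$ until the $\Q^x$-a.s.\ finite time $\tau_\xi$ at which it reaches $0$ (and is frozen), branching at rate $2\beta$ along the way; at each branching a non-spine particle starts a conditionally independent BBM with law $\P^{X_\xi(s)}$. Writing $(s_i)$ for the branching times and $K^{(i)}$ for the total number of particles eventually frozen in the BBM emitted at time $s_i$, we have
$$
\Nhits(\infty)\ =\ 1\ +\ \sum_{i:\ s_i<\tau_\xi} K^{(i)}\qquad \Q^x\text{-a.s.}
$$

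Second, I would identify the spine path with the time reversal of $Y$ from its last visit to~$x$. The process $Y$ is precisely the Doob $h$-transform of a Brownian motion with drift $+\sqrt{\mu^2-2\beta}$ by the ``never hit $0$'' harmonic function, which in the supercritical case is $h(y)=1-e^{-2\sqrt{\mu^2-2\beta}\,y}$; a direct computation of the transformed drift gives $\sqrt{\mu^2-2\beta}\coth(\sqrt{\mu^2-2\beta}\,y)$ and reproduces the stated SDE (the critical case is recovered by $h(y)=y$ and the drift degenerates to $1/y$). The key analytic input is then the Williams--Nagasawa time-reversal theorem for transient one-dimensional diffusions, which asserts that the reversed process $\bigl(Y(\tau_x-t)\bigr)_{0\le t\le \tau_x}$ has the law of a Brownian motion with drift $-\sqrt{\mu^2-2\beta}$ started at $x$ and killed at $0$, i.e.\ the same law as $\bigl(X_\xi(t)\bigr)_{0\le t\le \tau_\xi}$ under $\Q^x$.

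Third, I would transport the branching events through the time reversal. Conditionally on $\xi$, the spine branching times form a Poisson point process of rate $2\beta$ on $[0,\tau_\xi]$; since the intensity is constant this law is invariant under $t\mapsto\tau_x-t$, so the images $\{\tau_x-s_i\}$ are, conditionally on $Y$, a rate-$2\beta$ Poisson point process on $[0,\tau_x]$, jointly distributed with $Y$ exactly like the restriction of $(t_i)$ in the lemma's construction. At each atom of this process an independent BBM is attached, issued from the corresponding spine/Y position, and its absorbed-particle count depends only on that position, not on the time label; summing with the spine's own contribution of $1$ yields the claimed distributional identity
$$
\bigl(\Nhits(\infty),\,\Q^x\bigr)\ \stackrel{(d)}{=}\ \Bigl(1+\sum_{i:\ t_i\le\tau_x}\tilde\Nhits_i,\,\Q^\infty\Bigr).
$$

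The substantive obstacle is the time-reversal identity in the second step: the other two ingredients are essentially bookkeeping on top of the spine description and of the time-reversal symmetry of homogeneous Poisson point processes. Either one invokes a clean general statement (Williams' decomposition at last exit times, or Nagasawa's duality formula) or one verifies directly that the transition densities of the killed Brownian motion on $(0,\infty)$ and of the time-reversed $h$-transform coincide, which is a standard but somewhat delicate computation with the heat kernel on the half-line.
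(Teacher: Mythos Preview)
Your proposal is correct and follows essentially the same route as the paper: the paper states that ``this result should be clear once it is realized that the process $Y$ is the reversed path of the spine $\xi$'' and then reduces the lemma to the time-reversal identity $\{(\xi(t),t\le\tau_\xi),\tau_\xi\}\stackrel{\mathcal L}{=}\{(Y(\tau_x-t),t\le\tau_x),\tau_x\}$, which it dispatches by citing a reference. Your write-up is more detailed (explicit $h$-transform computation of the SDE, explicit invariance of the rate-$2\beta$ Poisson process under time reversal), but the architecture and the identification of the time-reversal step as the only substantive input are identical.
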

This result should be clear once it is realized that the process
$Y$ is the reversed path of the spine $\xi$.

\begin{proof} We only treat the case $\mu>\sqrt{2\beta}$ since the zero-drift case is similar.
The only thing we need to prove here is that if $(\xi(t), t\le \tau_\xi)$ is
a Brownian motion with drift $-\sqrt{\mu^2-2\beta}$ started form $x$ and
stopped at time $\tau_\xi := \inf\{ t: \xi(t)=0\}$, then 
$$
\{(\xi(t), t\le \tau_\xi), \tau_\xi\}  \stackrel{\mathcal{L}}{=} \{(Y(\tau_x-t), t\le \tau_x), \tau_x\}.
$$
This follows for instance from \cite{larbi}.

\end{proof}

The upshot of Lemma \ref{coupling} is that we can now construct the
variables $\Nhits(\infty)$ under $\Q^x$ for all values of $x$
simultaneously. We write $\Q^\infty$ for the joint law of the variables
$((Y(t), t\ge 0), (t_i)_{i \in \N}, \tilde\Nhits_i)$ described above.
Then under $\Q^\infty$, clearly
$(\Nhits^x(\infty), x\ge 0)$ is an increasing process in $x$. We call $\Nhits^\infty(\infty)$ its limit which is also the total number of particles absorbed at 0 under $\Q^\infty.$

\begin{lem}\label{conv of Nx}
We have that 
$
\Nhits^\infty(\infty)  <\infty
$
$\Q^\infty$-almost surely.
\end{lem}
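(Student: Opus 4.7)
The plan is to prove $\Nhits^\infty(\infty)<\infty$ $\Q^\infty$-almost surely by showing that its expectation is finite; since $\Nhits^\infty(\infty)$ is non-negative, this is enough.

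The first step is to condition on the spine path $Y$ and on the Poisson times $(t_i)$. In the construction of Lemma~\ref{coupling}, the branching Brownian motions producing the $\tilde\Nhits_i$ are independent of $Y$ and of $(t_i)$ given the starting positions $Y(t_i)$, so by Lemma~\ref{L:conv of Z} we have $\E[\tilde\Nhits_i\mid Y(t_i)]=e^{-rY(t_i)}$. Summing over $i$ and then applying Campbell's formula for the Poisson process $(t_i)$ of intensity $2\beta$ (independent of $Y$) yields
$$\E^{\Q^\infty}\big[\Nhits^\infty(\infty)\big] = 1 + 2\beta\int_0^\infty \E\big[e^{-rY(t)}\big]\,\diffd t,$$
so it remains only to show that this integral is finite.

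The second step is to bound $\E[e^{-rY(t)}]$ in each of the two subcases. When $\mu>\sqrt{2\beta}$, the drift coefficient $\sqrt{\mu^2-2\beta}\,\coth(\sqrt{\mu^2-2\beta}\,y)$ tends to $\nu:=\sqrt{\mu^2-2\beta}$ as $y\to\infty$, and a standard argument (either a pathwise comparison, or the Doob $h$-transform representation of $Y$ as a Brownian motion with drift $\nu$ conditioned to stay positive via $h(y)=1-e^{-2\nu y}$) gives $Y(t)/t\to\nu$ $\Q^\infty$-almost surely, which forces $\E[e^{-rY(t)}]$ to decay exponentially in $t$. When $\mu=\sqrt{2\beta}$, the SDE for $Y$ is precisely that of a three-dimensional Bessel process started at $0$, with explicit density $p_t(y)=\sqrt{2/\pi}\,y^2 t^{-3/2}e^{-y^2/(2t)}$; the substitution $u=y/\sqrt{t}$ then gives the polynomial bound $\E[e^{-rY(t)}]\le C/t^{3/2}$ for large $t$. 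Since in both cases $\E[e^{-rY(t)}]\le 1$ on $[0,1]$, the integral converges.

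The main technical point is pinning down the asymptotic growth of $Y$ in the supercritical case $\mu>\sqrt{2\beta}$: the correction $\coth(\nu y)-1$ is non-negligible near $y=0$, but since $Y$ is transient it spends only a bounded random time in any neighbourhood of the origin, and the claimed linear growth follows from the eventual comparison with a drifted Brownian motion. Everything else in the argument (Campbell's formula, Tonelli, the Bessel density computation) is routine.
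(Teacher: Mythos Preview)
Your overall strategy---proving $\E^{\Q^\infty}[\Nhits^\infty(\infty)]<\infty$ via Campbell's formula and Lemma~\ref{L:conv of Z}---is sound and in fact yields a stronger conclusion than the paper's. The paper instead works almost surely: it uses $Y(t_i)\ge c\,i$ (or $c\,i^{1/2-\epsilon}$ in the critical case) for large $i$, bounds $\Q^\infty(\tilde\Nhits_i>0)\le 1-\TW(Y(t_i))\le e^{-rY(t_i)}$, and applies Borel--Cantelli to conclude that only finitely many $\tilde\Nhits_i$ are nonzero. Your $L^1$ route is a genuine alternative, and the critical case (BES(3) density, $t^{-3/2}$ decay) is handled correctly.

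There is, however, a real gap in the supercritical case. The assertion that ``$Y(t)/t\to\nu$ almost surely forces $\E[e^{-rY(t)}]$ to decay exponentially'' is a non sequitur: almost-sure linear growth says nothing about the expectation of $e^{-rY(t)}$, which is controlled by the \emph{lower} tail of $Y(t)$. Worse, the naive pathwise comparison you allude to fails: since $\coth(\nu y)\ge1$ one has $Y_t\ge W_t$ for $W$ a Brownian motion with drift $\nu$ started at $0$, but then
\[
\E[e^{-rW_t}]=e^{(-r\nu+r^2/2)t}=e^{\beta t},
\]
which grows (here one uses $r^2/2=\mu r-\beta$ and $r(\mu-\nu)=rR=2\beta$). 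So this comparison gives nothing.

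Two clean fixes are available. Either carry the $h$-transform through to the explicit entrance density
\[
q_t(y)=\frac{(1-e^{-2\nu y})\,y}{\nu t\sqrt{2\pi t}}\,e^{-(y-\nu t)^2/(2t)},
\]
complete the square in $\int e^{-ry}q_t(y)\,\diffd y$ to get the exponent $-(y+\mu t)^2/(2t)+\beta t$, and bound the remaining integral by $Ct\,e^{-\mu^2 t/2}$, yielding $\E[e^{-rY_t}]\le C t^{-1/2}e^{(\beta-\mu^2/2)t}$ with $\beta-\mu^2/2<0$. Or, more simply, use $\coth x\ge 1/x$ to compare $Y$ to a BES(3) process $R$ and invoke your own critical-case bound $\E[e^{-rR_t}]\le C t^{-3/2}$, which is already enough for integrability and works uniformly in both regimes.
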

\begin{proof}
We start with the $\mu>\sqrt{2\beta} $ case.
First observe that for $\epsilon>0$ fixed, there exists almost surely a random $i_0 \in \N$ such that 
$$
\forall i\ge i_0,
\quad Y(t_i) \ge \Big({\sqrt{\mu^2-2\beta} \over 2\beta}-\epsilon \Big)
i  .
$$
This simply comes form the fact that $Y(t)/t \to \sqrt{\mu^2-2\beta}$ and
$t_i/i \to (2\beta)^{-1}$ almost surely. Now, $1-\TW(x) \le e^{-(\mu
+\sqrt{\mu^2-2\beta})x}$ because under $\Q^x$ we have that $\Zstop\ge 1$
almost surely and therefore $\Q^x(1/\Zstop)\le 1.$ Hence, for any $i\ge
i_0$ we have that 
$$
\Q^\infty(\tilde \Nhits_i >0) \le 1-\TW  \Big(\big({\sqrt{\mu^2-2\beta} \over 2\beta}-\epsilon \big) i \Big) \le e^{-c  i }
$$
for some positive constant $c$. Thus a straightforward application of Borel-Cantelli Lemma shows that almost surely, there exists $j_0\in \N$ such that $\forall i\ge j_0, \tilde \Nhits_i =0,$ which yields the desired result. The zero-drift case is similar. One just needs to start the argument by observing that for $\epsilon>0$ fixed, there exists almost surely a random $i_0 \in \N$ such that 
$$
\forall i\ge i_0,
\quad Y(t_i) \ge c i^{1/2-\epsilon}
$$
where $c$ is a constant. The proof then follows as before.
\end{proof}

The following Lemma completes the proof of Theorem \ref{Thm: martingale representation}.
\begin{lem}\label{Q^x to Q infty}
For $ s\in [0,s_0]$ small enough, we have that 
\begin{align}
\label{Q2} \Q^x \Big(\frac{s^{\Nhits(\infty)}}{\Nhits(\infty)}\Big) &\to  \Q^\infty
\Big(\frac{s^{\Nhits^\infty(\infty)}}{\Nhits^\infty(\infty)}\Big).
\end{align}
\end{lem}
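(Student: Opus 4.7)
The plan is to realise both expectations on the common probability space of Lemma~\ref{coupling} and then invoke dominated convergence.

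By the coupling of Lemma~\ref{coupling}, under $\Q^\infty$ the variable $\Nhits^x(\infty)$ has the same law as $\Nhits(\infty)$ under $\Q^x$, so
\begin{equation*}
\Q^x\!\Big(\frac{s^{\Nhits(\infty)}}{\Nhits(\infty)}\Big) = \Q^\infty\!\Big(\frac{s^{\Nhits^x(\infty)}}{\Nhits^x(\infty)}\Big).
\end{equation*}
By construction the family $\{\Nhits^x(\infty)\}_{x\ge 0}$ is $\Q^\infty$-a.s.\@ non-decreasing in $x$; combined with Lemma~\ref{conv of Nx}, which asserts $\Nhits^\infty(\infty)<\infty$ almost surely, this forces the integer $\Nhits^x(\infty)$ to be equal to $\Nhits^\infty(\infty)$ for all sufficiently large~$x$. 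Consequently
\begin{equation*}
\frac{s^{\Nhits^x(\infty)}}{\Nhits^x(\infty)} \xrightarrow[x\to\infty]{} \frac{s^{\Nhits^\infty(\infty)}}{\Nhits^\infty(\infty)} \qquad \Q^\infty\text{-a.s.}
\end{equation*}

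Next I would produce an integrable majorant. For $s\in[0,1]$ this is immediate: since $\Nhits^x(\infty)\ge 1$ one has $s^{\Nhits^x(\infty)}/\Nhits^x(\infty)\le 1$, and dominated convergence concludes. For $s\in(1,s_0)$ the pointwise bound $\Nhits^x(\infty)\le \Nhits^\infty(\infty)$ gives $s^{\Nhits^x(\infty)}/\Nhits^x(\infty) \le s^{\Nhits^\infty(\infty)}$, so the task reduces to proving $\Q^\infty[s^{\Nhits^\infty(\infty)}]<\infty$.

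For this I would use~\eqref{kolmo} together with the definition of $\Q^x$ to write, for any $s\in[0,s_0)$,
\begin{equation*}
\Q^\infty\!\big[s^{\Nhits^x(\infty)}\big] = \Q^x\!\big[s^{\Nhits(\infty)}\big] = e^{rx}\,\E^x\!\big[\Nhits(\infty)\,s^{\Nhits(\infty)}\big] = e^{rx}\,s\,\partial_s\TW_s(x)=\frac{s\, e^{rx}\,\TW_s'(x)}{a(s)}.
\end{equation*}
By Proposition~\ref{prop3}, $1-\TW_s(x)\sim B_s e^{-rx}$; injecting this into the ODE~\eqref{E:TWs} shows that $\TW_s'(x)\sim r B_s e^{-rx}$, so $e^{rx}\TW_s'(x)$ admits a finite limit. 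Because $s\ge 1$ makes $s^{\Nhits^x(\infty)}$ monotone non-decreasing in~$x$, the monotone convergence theorem then yields
\begin{equation*}
\Q^\infty\!\big[s^{\Nhits^\infty(\infty)}\big]=\lim_{x\to\infty}\Q^\infty\!\big[s^{\Nhits^x(\infty)}\big] = \frac{s\,r\,B_s}{a(s)}<\infty,
\end{equation*}
and dominated convergence completes the argument for every $s\in[0,s_0)$. The main obstacle is the boundary case $s=s_0$: then $a(s_0)=0$ makes the above identity degenerate, and a finer analysis balancing $\TW_{s_0}'(x)$ against $a(s)$ as $s\uparrow s_0$ would be required. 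This endpoint is precisely what the qualifier ``small enough'' in the statement is designed to side-step.
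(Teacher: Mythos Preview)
Your argument is correct for $s\in[0,s_0)$, with one small gap: the implication $1-\TW_s(x)\sim B_s e^{-rx}\Rightarrow \TW_s'(x)\sim rB_s e^{-rx}$ is not a consequence of ``injecting into the ODE'' (the equation is second order, so knowing the asymptotic of the function alone does not pin down the derivative). It does follow cleanly from the series representation in Proposition~\ref{thm:solutionExpansion}, since $\TW_s(x)=1-\Phi(B_se^{-rx})$ gives $\TW_s'(x)=rB_se^{-rx}\Phi'(B_se^{-rx})$ and $\Phi'(0)=1$; alternatively the Hartman linearisation of Lemma~\ref{onlyone} gives it directly.

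The paper's proof is shorter and more elementary, and it also covers the endpoint $s=s_0$ that you had to exclude. For $s>1$ the paper does not look for an integrable dominant at all. Instead it observes that $t\mapsto s^t/t$ is decreasing on $[1,1/\log s]$ and increasing on $[1/\log s,\infty)$, and splits
\[
\Q^\infty\!\Big(\frac{s^{\Nhits^x(\infty)}}{\Nhits^x(\infty)}\Big)
=\Q^\infty\!\Big(\cdots;\ \Nhits^x(\infty)\le \tfrac{1}{\log s}\Big)
+\Q^\infty\!\Big(\cdots;\ \Nhits^x(\infty)\ge \tfrac{1}{\log s}\Big).
\]
On the first piece the integrand is bounded by $s$, so dominated convergence applies; on the second piece the integrand is monotone non-decreasing in $x$ (once $\Nhits^x(\infty)$ crosses $1/\log s$ it stays there and the function increases), so monotone convergence applies. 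This avoids any appeal to Proposition~\ref{prop3} or to~\eqref{kolmo}, and in particular bypasses the $a(s_0)=0$ degeneracy you flag. What your route buys in exchange is the explicit identity $\Q^\infty\big[s^{\Nhits^\infty(\infty)}\big]=srB_s/a(s)$ for $s<s_0$, which is not visible from the paper's argument.
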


\begin{proof}
The monotone convergence Theorem  applies when $s\le 1$
so we suppose $1<s\le s_0$. Observe that the map $t \mapsto s^t/t$ is
decreasing on $[1,1/\log s]$ and increasing on $[1/\log s, \infty).$ Thus 
we write
\begin{align*}
\Q^x \Big(\frac{s^{\Nhits(\infty)}}{\Nhits(\infty)}\Big) &= \Q^{\infty}
\Big(\frac{s^{\Nhits^x(\infty)}}{\Nhits^x(\infty)}\Big) \\
&= \Q^{\infty} \Big(\frac{s^{\Nhits^x(\infty)}}{\Nhits^x(\infty)}
;   \Nhits^x(\infty) \le 1/\log s   \Big)  +\Q^{\infty}
\Big(\frac{s^{\Nhits^x(\infty)}}{\Nhits^x(\infty)}     ;   \Nhits^x(\infty) \ge 1/\log s   \Big) \\
&\to \Q^{\infty}
\Big(\frac{s^{\Nhits^\infty(\infty)}}{\Nhits^\infty(\infty)}
;   \Nhits^\infty(\infty) \le 1/\log s   \Big)  +\Q^{\infty}
\Big(\frac{s^{\Nhits^\infty(\infty)}}{\Nhits^\infty(\infty)}
;   \Nhits^\infty(\infty) \ge 1/\log s   \Big) 
\\&=\Q^{\infty} \Big(\frac{s^{\Nhits^\infty(\infty)}}{\Nhits^\infty(\infty)}   \Big)
\end{align*}
where the first convergence comes from the dominated convergence Theorem
and the second from the monotone convergence Theorem.

\end{proof}

%%%%%%%%%%%%%%%%%%%%%%%%%%%%%%%%%%%%%%%

\subsubsection{Proof of Proposition~\ref{prop3}}
\label{proofprop3}

We consider here all the solutions $x\mapsto v(x)$ to \eqref{E:TW} that
remains in $[0,1)$. By Cauchy's theorem, a solution to \eqref{E:TW} is
entirely determined once the derivative at the origin is given.

Let $r$ and $R<r$ be the two roots of the polynomial $\frac12X^2-\mu X+\beta$:
$$r=\mu+\sqrt{\mu^2-2\beta},\qquad R=\mu-\sqrt{\mu^2-2\beta}.$$
(See also \eqref{eq:r}.) From the general theory of differential equations,
one has
\begin{lem}
\label{onlyone}
Let $v$ be a solution to
\begin{equation}\label{eqv2}
0=\frac12v''+\mu v'+\beta(v^2-v)
\end{equation}
such that $v(x)$ converges to~1 as $x\to\infty$. Then, for some non-zero
constant $A$ or $B$,
\begin{itemize}
\item if $\mu>\sqrt{2\beta}$ one has either $1-v(x)\sim A e^{-Rx}$ or
$1-v(x)\sim B e^{-rx}$ as $x\to\infty$.
\item If $\mu=\sqrt{2\beta}=r=R$ one has either $1-v(x)\sim A x e^{-\mu x}$ or
$1-v(x)\sim B e^{-\mu x}$ as $x\to\infty$.
\end{itemize}
Furthermore, up to invariance by translation, there are exactly two
solutions which converges to~1 in the fast way (as $B e^{-rx}$); one of
them approaching 1 by above and the other from below.
\end{lem}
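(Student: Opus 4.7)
The plan is to convert \eqref{eqv2} into a planar autonomous system and read off the asymptotic behaviour near $v=1$ from the (strong) stable manifold theorem. Setting $w=1-v$ and $p=w'$ turns the equation into
\[
w'=p,\qquad p'=-2\mu p-2\beta w+2\beta w^2,
\]
with a fixed point at the origin whose linearisation has characteristic polynomial $\lambda^2+2\mu\lambda+2\beta$ and thus eigenvalues $-r,-R$ (where $r,R$ are as in the statement). The nonlinearity is purely quadratic.

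In the generic regime $\mu>\sqrt{2\beta}$ the eigenvalues are distinct negatives and the origin is a hyperbolic stable node. Classical invariant-manifold theory produces a one-dimensional smooth strong stable manifold $W^{ss}$, tangent at $(0,0)$ to the eigenvector of $-r$, consisting of those trajectories that decay strictly faster than $e^{-R'x}$ for any $R'<r$; every other trajectory converging to the origin is tangent to the eigenvector of $-R$. To extract precise leading constants I would then use variation of parameters to recast the ODE as the integral equation
\[
w(x)=c_1e^{-Rx}+c_2e^{-rx}+\frac{2\beta}{r-R}\int_x^{\infty}\bigl(e^{-R(x-y)}-e^{-r(x-y)}\bigr)w(y)^2\,\diffd y,
\]
and bootstrap: any trajectory with $w\to 0$ is automatically $O(e^{-Rx})$, the integral term is then strictly subdominant, and hence $1-v(x)\sim c_1e^{-Rx}$ with $c_1\neq 0$ off $W^{ss}$, while $1-v(x)\sim c_2e^{-rx}$ with $c_2\neq 0$ on $W^{ss}$ (the case $c_1=c_2=0$ forces $v\equiv 1$ by uniqueness for the Cauchy problem).

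For the last sentence of the statement, $W^{ss}\setminus\{(0,0)\}$ is a smooth curve through the origin that the flow decomposes into two connected half-orbits on opposite sides of the fixed point. Time translation in $x$ acts freely on each half-orbit, so there are exactly two equivalence classes of fast-decaying solutions modulo translation, and the two half-orbits carry opposite signs of $w=1-v$ near the origin, giving approach to $1$ respectively from below and from above. The critical case $\mu=\sqrt{2\beta}$ is handled in the same way after noting that the linearisation is defective: the Jordan block contributes a polynomial factor $xe^{-\mu x}$ to the generic direction while the one-dimensional strong stable manifold still yields pure $e^{-\mu x}$ decay, and the dichotomy and counting of orbits are unchanged. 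The main technical obstacle is making the bootstrap clean uniformly across the parameter range --- in particular ruling out a trajectory on $W^{ss}$ with $c_2=0$ --- but this is standard once an a priori exponential bound is in place and the linear Cauchy uniqueness is invoked at the endpoint.
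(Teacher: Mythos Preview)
Your proposal is correct and follows the same overall architecture as the paper: rewrite \eqref{eqv2} as a planar autonomous system for $(w,w')$ with $w=1-v$, identify the origin as a sink with eigenvalues $-r,-R$, and read off the asymptotics from the local dynamics. The difference lies in the specific dynamical-systems tool invoked.

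The paper appeals directly to Hartman's $C^1$ linearisation theorem: in dimension~2 with a hyperbolic fixed point there is a $C^1$ diffeomorphism $\phi$ with $D\phi(0)=I$ conjugating the nonlinear flow to its linearisation $\dot U=\Gamma U$. Since $\phi$ has derivative the identity at the origin, $X(t)=\phi^{-1}(U(t))\sim U(t)$ as $t\to\infty$, and the dichotomy between $e^{-Rx}$ and $e^{-rx}$ decay (and the counting of fast-decaying orbits) is inherited verbatim from the explicit linear solutions. This is slightly slicker than your route because the precise leading constants come for free from the conjugacy, with no bootstrap needed.

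Your approach via the strong stable manifold plus a variation-of-parameters integral equation is an equally standard alternative; it is arguably more self-contained (it does not rely on the somewhat delicate $C^1$ version of Hartman, only on the stable/strong-stable manifold theorem), and it makes the structure of the fast-decaying set $W^{ss}$ more explicit. The price is the extra bookkeeping in the bootstrap and the endpoint argument you flag. Note also that the paper, like you, only treats the non-degenerate case $\mu>\sqrt{2\beta}$ in detail and waves at the Jordan-block case; your sketch of the critical case is at the same level of detail.
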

This lemma simply tells that the solutions to the non-linear
equation~\eqref{eqv2} behave around $v=1$ as the solutions to the 
equation linearised around~1. We already used implicitly this result in
Section~\ref{KPPasymp}.
\begin{proof}
This  follows from a result of Hartman \cite{hartman} who shows that if
\begin{equation}
\dot{X} = \Gamma X +F(X)
\end{equation}
is a non-linear differential system of dimension 2 with $\Gamma$ a hyperbolic (eigenvalues have a non-zero real part) matrix and $F$ is $C^1$ with $F(x) = o(|x|)$ as $x\to 0$ (so 0 is a critical point) , then there exists a $C^1$ diffeomorphism $\phi$ with derivative the identity at the origin such that $U(t) = \phi(X(t))$ solves 
\begin{equation}
\dot{U} =\Gamma U.
\end{equation}
Otherwise said the solutions of the linearized system and the solutions of the non-linear system are (locally around 0) in one-to-one correspondence through $\phi$.

We apply this result to the following system where $v=1-u$ is a solution of \eqref{eqv2}
\begin{equation}
X(t) =  \begin{pmatrix} x(t) \\ y(t) \end{pmatrix} = \begin{pmatrix} u(t) \\ u'(t) \end{pmatrix} ;  \quad \dot{X}(t)= \begin{pmatrix} u'(t) \\ u''(t) \end{pmatrix} =  \begin{pmatrix} y(t) \\ -2\mu y(t)-2\beta \big[x(t)-x^2(t)\big] \end{pmatrix} ,
\end{equation}
which has a critical point at $(x,y)=(0,0)$. In this case 
\begin{equation}
\Gamma = \begin{pmatrix} 0 &1\\-2\beta & -2\mu \end{pmatrix}
\end{equation}
with eigenvalues $- r = -\mu -\sqrt{\mu^2-2\beta}$ and $-R = -\mu +\sqrt{\mu^2-2\beta}$ (for simplicity we only consider the case $r\neq R$ here) and corresponding eigenvectors $\begin{pmatrix} 1 \\-r \end{pmatrix}$ and $\begin{pmatrix} 1 \\-R \end{pmatrix}$. The solutions of  $\dot U =\Gamma U$ are of the form 
\begin{equation}
U(t)=\begin{pmatrix} u_1(t) \\ u_2(t) \end{pmatrix} = Be^{-rt} \begin{pmatrix} 1 \\-r \end{pmatrix} + Ae^{-Rt} \begin{pmatrix}1 \\-R \end{pmatrix}.
\end{equation}
Thus the only solutions such that $|U(t)| \sim ce^{-rt}$ for some constant $c$ are those such that $A=0.$ If $B>0$ then  $u_1$ approaches by above, if $B<0$ $u_1$ approaches by below. Hartman's Theorem tells us that there exists
\begin{equation}
\phi(X) =X + f(X), \quad f(X)=o(|X|) \text{ when } x\to 0
\end{equation}
such that the solutions $X(t)$ of the non-linear system are locally 
\begin{equation}
X(t)=\phi^{-1}(U(t)).
\end{equation}
Thus, (after a shift in the argument, replacing $x$ by $x+\ln |B| /r$) there is exactly one solution $X$ to the non linear system such that $|X(t)|e^{rt} $ has a non degenerate limit and such that $x(t)$, the first coordinate of $X(t)$, is eventually positive (resp. eventually negative). 
\end{proof}

Let $s<1$ be such that there is a solution $v$ of \eqref{eqv2} with $v(0)=s$, $1-v(x) \sim c e^{-rx}$, and $v(x)<1$ for all $x\ge 0$ (we now know that such an $s$ exists). Then $\TW_s(x)$ being the maximal solution of  \eqref{E:TW} that starts from $s$ and stays below 1, we must have 
$$
\TW_s(x) \ge v(x), \forall x\ge 0.
$$
Since we also know that the only two possibilities for the asymptotic behavior of $\TW_s$ are that either 
$\TW_s(x) e^{rx} \to c$ or $\TW_s(x) e^{Rx} \to c$ we conclude that it is the former that holds. The same argument apply for $\TW_s(x)$ for any $s\le s_0$ and in the critical case.
This concludes the proof of
Proposition~\ref{prop3} for $\TW(x)$.

%%%%%%%%%%%%%%%%%%%%%%%%%%%%%%%%%%%%%%%%%%%%%%%%%%%%%%%%%%
%%%%%%%%%%%%%%%%%%%%%%%%%%%%%%%%%%%%%%%%%%%%%%%%%%%%%%%%%%
%%%%%%%%%%%%%%%%%%%%%%%%%%%%%%%%%%%%%%%%%%%%%%%%%%%%%%%%%%

\subsubsection{Proof of Proposition~\ref{thm:solutionExpansion}}
\label{sub:Expansion-formulaProof}
We consider the series $\Phi(z)=\sum_{n\ge1}a_n z^n$ defined in~\eqref{Phi}
with the coefficients~$a_n$ defined in~\eqref{eq:expansionCoefficients}.
The function $z\mapsto\Phi(z)$ is a well defined object because, by
induction on \eqref{eq:expansionCoefficients} one has easily $0<a_n\le1$
and, therefore, $\mathcal R\ge1$.
It is then very easy to check by direct substitution that for any $B\in\R$,
the function
\begin{equation}\label{vB}
x\mapsto v(x) = 1-\Phi(B e^{-rx})\quad\text{for $x$ such that
$|B|e^{-rx}<\mathcal R$},
\end{equation}
is solution to the partial differential equation $\frac12v''+\mu
v'+\beta(v^2-v)=0$ which appears in \eqref{E:TW} (when discussing $\TW$)
and in \eqref{Eq:TW_s} (when discussing $\TW_s$).
Recall that $r=\mu+\sqrt{\mu^2-2\beta}$ is the larger root of
$\frac12X^2-\mu X+\beta$.

As the coefficients $a_n$ are positive, $\Phi(z)$ is
non-negative and increasing for $z\ge0$. As $a_1=1$ and $a_2>0$, it is easy
to find a $0<z_0<1\le\mathcal R$ such that $\Phi(z_0)>a_1 z_0+a_2z_0^2>1$.
This implies that there must exists a $B_0\in(0,\mathcal R)$ (smaller than
$z_0$) such that
$\Phi(B_0)=1$. With $B=B_0$, the function~$v(x)$ in \eqref{vB} is smaller
than~1 and converges to~1 for large~$x$ as $e^{-rx}$. Using
Proposition~\ref{prop3}, this implies that $v(x)=\TW(x)=1-\Phi(B_0
e^{-rx})$.

Recall by Theorem~\ref{thm w_s} that $\TW_s$ for $s<1$ is simply equal to
$\TW$ correctly shifted to have $\TW_s(0)=s$. This implies that, for $s<1$,
$\TW_s(x)=1-\Phi(B_s e^{-rx})$ where $B_s\in(0,B_0]$ is such that
$\Phi(B_s)=1-s$.

The case $s=1$ is trivial, we now turn to $s>1$. As for $s=0$, we have the following
points:
\begin{itemize}
\item for $s>1$, $\TW_s$ is the smallest solution to
\eqref{Eq:TW_s} that remains above~1 (Theorem~\ref{thm2}).
\item By Lemma~\ref{onlyone}, there is exactly one solution to
\eqref{Eq:TW_s} which remains above~1 and decays to 1 as $e^{-rx}$.
Because of the previous point, this solution must be $\TW_s$.
\end{itemize}
Now consider $\Phi(z)$ for negative arguments. Because $\Phi(0)=0$ and
$\Phi'(0)=1$, there must exists $B\in(-\mathcal R,0)$ such that $\Phi$ is
negative on $[B,0)$. Then, the function $x\mapsto 1-\Phi(B e^{-rx})$ is
solution to~\eqref{Eq:TW_s} for $s=1-\Phi(B)>1$, remains above~1 for
$x\ge0$ and converges to~1 as~$e^{-rx}$.
Therefore, it must be $\TW_s$ for that particular~$s$.

But all the functions~$\TW_s$ for $1<s\le s_0$ are related through
Theorem~\ref{thm w_s}: they are all shifted versions of $\TW_{s_0}$.
Therefore, for any $s\in(1,s_0]$, one has $\TW_s(x)=1-\Phi(B_s e^{-rx})$
for a well chosen negative~$B_s$ (which represents the shift), at least for
values of $x$ sufficiently large to have $|B_s|e^{-r x}<\mathcal R$.

%%%%%%%%%%%%%%%%%%%%%%%%%%%%%%%%%%%%%%%%%%%%%%%%%%%
%%%%%%%%%%%%%%%%%%%%%%%%%%%%%%%%%%%%%%%%%%%%%%%%%%%

\subsection{Proof of Theorem \ref{thm:KPP}}
\label{S:proof mu<muc}
We assume to be in regime A or B ($\mu <\sqrt{2\beta}$) and we want to show how
$\pr(t,x)=P^x(K(t)=0)$ converges to a KPP travelling wave.

The proof is essentially analytic and relies on Bramson's result \cite{bramson:1983fk} and the maximum principle.
The key step is to compare $\pr(t,x)$ to a new function
$\auxsol^{T}:[T,+\infty)\times\R\mapsto\R$ (where $T\ge 0$ is a parameter)
where $\auxsol^T(t,x)$  is defined as the probability, in the standard
branching Brownian motion (without absorption nor stopping) with
drift~$\mu$ starting
from  $x$, that no particles are present 
in the negative half-line between times $t-T$ and $t$.   In symbols
\begin{equation}
\auxsol^{T}(t,x):=\P^x\big(\forall r\in  [t-T, t], \forall u \in \Nall(r)
: X_u(r) > 0 \big),
\label{eq:representation}
\end{equation}
where we recall that $\Nall(s)$ is the population of particles at time $s$ in a branching Brownian motion without absorption or stopping.

The advantage of $\auxsol^T$ is that since it is defined on $\Nall$ it satisfies a KPP equation on the whole real line:  
\[
\begin{cases}
\partial_t
\auxsol^{T}=\frac{1}{2}\partial_{xx}\auxsol^{T}
+\mu\partial_x\auxsol^{T}
+\beta\big((\auxsol^{T})^{2}-\auxsol^{T}\big),
\qquad (t,x) \in [T,+\infty)\times\R  \\
\auxsol^{T}(T,x)=\pr(T,x),\quad\text{for }x\geq0,\\
\auxsol^{T}(T,x)=0,\quad\text{for }x<0.
\end{cases}
\]
Otherwise said the function $\tilde \auxsol^T(t,x) =\auxsol^T(T+t, x)$
solves the KPP equation on the whole line with initial condition $\tilde
\auxsol^T(0,x) = \pr(T,x)\indic{x>0}$. Since for $T>0$ fixed, $1- \pr(T,x)$
goes to 0 as $x \to
\infty$  with a super exponential  decay
(like the tail of a Gaussian), Bramson's convergence Theorem  \cite[Theorem
A]{bramson:1983fk},
ensures that there exists a constant $\tilde C_{T} \in\R$  such that we have 
\begin{equation}\label{tilde u^T converge}
\Vert \tilde \auxsol^{T}(t,\cdot+m_t-\mu t +\tilde
C_{T})-h_*(\cdot)\Vert_\infty\to 0 \text{ as } t \to \infty,
\end{equation}
where Bramson's displacement $m_t$ is given in \eqref{m_t}.
The value $\tilde C_T$  depends on $T$ because for different  $T$ we plug
different initial conditions in the KPP equation .

Since $m_t -m_{t-T} \to \sqrt{2\beta}\,T$ when $t\to \infty$, one obtains
taking $C_T =\tilde C_T - \sqrt{2\beta}\, T+\mu T $:
\begin{equation}\label{u^T converge}
\Vert \auxsol^{T}(t,\cdot+m_t-\mu t+C_{T})-h_*(\cdot)\Vert_\infty\to 0 \text{ as
} t \to \infty.
\end{equation}

Therefore we only need to show that for $t$ large enough, $\pr(t,x)$ is close to
$\auxsol^{T}(t,x)$:
\begin{lem}\label{u^T and w are close}
 \begin{equation}\label{E:U^T converge to w} 
 \Vert\auxsol^{T}(\cdot,\cdot)-\pr(\cdot,\cdot)\Vert_\infty\to 0
\text{ as } T \to \infty.
 \end{equation} 
In addition, there exists $C\in \R$ such that  $C_T \to C$ as $T\to \infty.$
\end{lem}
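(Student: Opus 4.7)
The plan is to prove the lemma in two stages: first, the uniform estimate $\|\auxsol^T-\pr\|_\infty \to 0$ via a branching decomposition at the first hit of the origin; then, the existence of the limit $C$ via monotonicity in $T$ and rigidity of translates of $h_*$.

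For the first stage, I begin with the trivial direction $\pr\le\auxsol^T$, which follows from set inclusion: the defining event for $\pr$ (no particle in $(-\infty,0)$ on $[0,t]$) implies the defining event for $\auxsol^T$ (the same restriction on the smaller window $[t-T,t]$). For the reverse bound, I would introduce $\tau:=\inf\{s\ge 0 : \exists u\in\Nall(s),\ X_u(s)=0\}$, the first time a particle of the full BBM started from $x$ touches the origin. Since Brownian paths that touch $0$ immediately dip into $(-\infty,0)$, on the event that $\auxsol^T$ holds but $\pr$ does not, one must have $\tau<t-T$ almost surely. Conditioning on $\tau=s\in[0,t-T)$ and on the configuration at time $\tau$, the branching property produces, from the hitting particle, an independent BBM started at $0$; the $\auxsol^T$-event forces all descendants of this subtree to stay $\ge 0$ during $[t-T,t]$, which in intrinsic time is a window of length $T$ starting at $u:=t-T-s\ge 0$. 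Dominating the contributions of the other subtrees by $1$ yields
\[
\auxsol^T(t,x)-\pr(t,x)\;\le\;\E^x\bigl[\indic{\tau<t-T}\,g(t-T-\tau)\bigr],\qquad g(u):=\P^0\bigl(\forall r\in[u,u+T],\ \forall v\in\Nall(r):\ X_v(r)>0\bigr).
\]

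The crux of the first stage is a uniform-in-$u$ bound on $g$. Restricting the inner event in $g$ to its terminal time $r=u+T$ gives $g(u)\le\P^0(M(u+T)>0)$, where $M(s):=\min_{v\in\Nall(s)}X_v(s)$ is the minimal position of the drift-$\mu$ BBM started from $0$. In regimes A and B one has $\mu<\sqrt{2\beta}$, and the classical asymptotics $M(s)/s\to\mu-\sqrt{2\beta}<0$ almost surely yield $M(s)\to-\infty$ a.s., hence $\P^0(M(s)>0)\to 0$ by dominated convergence. Therefore
\[
\sup_{u\ge 0}g(u)\;\le\;\sup_{s\ge T}\P^0(M(s)>0)\;=:\;\epsilon(T)\xrightarrow[T\to\infty]{}0,
\]
which closes the first stage with $\|\auxsol^T-\pr\|_\infty\le\epsilon(T)$.

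For the second stage, the map $T\mapsto\auxsol^T$ is non-increasing (enlarging the window strengthens the constraint), so $\pr\le\auxsol^{T'}\le\auxsol^T$ for $T'\ge T$ and consequently $\|\auxsol^T-\auxsol^{T'}\|_\infty\le\epsilon(T)$. Applying \eqref{u^T converge} to both $T$ and $T'$, one has for large $t$ that $\auxsol^T(t,y)$ uniformly approximates $h_*(y-m_t+\mu t-C_T)$ and similarly for $T'$. The uniform closeness of $\auxsol^T$ and $\auxsol^{T'}$ then translates, via the strict monotonicity of $h_*$ (which makes the $L^\infty$-distance between two translates $h_*(\cdot-a)$ and $h_*(\cdot-b)$ bounded below by an increasing modulus of $|a-b|$), into an estimate $|C_T-C_{T'}|\le\eta(\epsilon(T))$ for some $\eta$ with $\eta(0^+)=0$. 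Hence $(C_T)$ is Cauchy in $\R$ and converges to some $C$; the triangle inequality combined with $\|\auxsol^T-\pr\|_\infty\to 0$ and $C_T\to C$ finally gives $\pr(t,\cdot+m_t-\mu t+C)\to h_*$ uniformly, as needed for Theorem \ref{thm:KPP}.

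The main obstacle is the uniform smallness of $g(u)$; the argument reduces it to $\sup_{s\ge T}\P^0(M(s)>0)\to 0$, which is a soft consequence of the a.s.\ divergence $M(s)\to-\infty$ valid throughout regimes A and B. The rigidity step extracting $C$ from the Cauchy sequence $(C_T)$ is then routine.
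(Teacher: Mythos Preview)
Your argument is correct, and it reaches the same conclusion as the paper, but by a genuinely different route.

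\medskip
\textbf{First stage.} The paper proves $\|\auxsol^T-\pr\|_\infty\to 0$ analytically: it introduces $\tilde\pr:=(\pr+\epsilon)/(1+\epsilon)$, checks that $\tilde\pr$ is a supersolution of the KPP equation, uses the boundary estimate $\auxsol^T(t,0)\le\epsilon/(1+\epsilon)$ (their Lemma~\ref{lem:maximalAt0}, which is exactly your observation that $\P^0(M(s)>0)\to 0$), and then applies the parabolic maximum principle on $[T,\infty)\times\R_+$ to get $(1+\epsilon)\auxsol^T-\epsilon\le\pr\le\auxsol^T$. Your proof is purely probabilistic: you decompose on the first hitting time $\tau$ of the origin, use the strong Markov/branching property to isolate the subtree born at $0$, and bound its contribution by $\sup_{s\ge T}\P^0(M(s)>0)$. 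Both proofs pivot on the same soft input (the leftmost particle of a BBM with drift $\mu<\sqrt{2\beta}$ drifts to $-\infty$), but you trade the PDE comparison argument for a stopping-time decomposition. Your route is more self-contained in that it avoids invoking the maximum principle; the paper's route has the advantage of yielding the explicit two-sided sandwich $(1+\epsilon)\auxsol^T-\epsilon\le\pr\le\auxsol^T$.

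\medskip
\textbf{Second stage.} To extract the limit $C$, the paper introduces the median position $m_{1/2}(t)$ defined by $\pr(t,m_{1/2}(t))=1/2$, shows via the first stage and \eqref{u^T converge} that $m_{1/2}(t)-m_t+\mu t-C_T$ lies in the small interval $[h_*^{-1}(1/2-2\epsilon),h_*^{-1}(1/2+2\epsilon)]$, and deduces that $m_{1/2}(t)-m_t+\mu t$ converges to some $C$ with $C_T\to C$. You instead observe directly that $T\mapsto\auxsol^T$ is monotone, so $\|\auxsol^T-\auxsol^{T'}\|_\infty\le\epsilon(T)$ for $T'\ge T$, and then use the rigidity of translates of $h_*$ (if $\|h_*(\cdot-a)-h_*(\cdot-b)\|_\infty$ is small then $|a-b|$ is small, since $h_*$ is strictly increasing with $h_*(0)=1/2$) to conclude that $(C_T)$ is Cauchy. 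This is essentially the same rigidity idea, packaged as a Cauchy criterion rather than as convergence of a concrete centering sequence; your version is slightly more streamlined since it never names $m_{1/2}$.
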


Indeed, assuming that Lemma \ref{u^T and w are close} holds, we can conclude the
\begin{proof}[Proof of Theorem \ref{thm:KPP}]
Fix $\epsilon >0$. Using (\ref{u^T converge}) and
\eqref{E:U^T converge to w},  choose $T$ large enough that
$\Vert\auxsol^{T}(\cdot,\cdot)-\pr(\cdot,\cdot)\Vert_\infty<\epsilon$ and then
choose $t$ large enough so that $\Vert
\auxsol^{T}(t,\cdot+m_t-\mu t+C_{T})-h_*(\cdot)\Vert_\infty<\epsilon$. Then,  we
have that
\begin{align*}
\Vert\pr(t,\cdot+m_t-\mu t)  -h_*(\cdot-C)\Vert_\infty
  & \leq\Vert\pr(t,\cdot+m_t-\mu t)-\auxsol^{T}(t,\cdot+m_t-\mu t)\Vert_\infty
\\&\quad +\Vert\auxsol^{T}(t,\cdot+m_t-\mu t)-h_*(\cdot-C_{T})\Vert_\infty
\\&\quad +\Vert h_*(\cdot-C_{T})-h_*(\cdot-C)\Vert_\infty,\\
 & \leq2\epsilon+c|C_{T}-C|,
\end{align*}
where $c = \max_{x \in \R} h_*'(x)$. As $C_T\to C$, for $T$ large enough
independently of $x$ this can be made smaller than $3\epsilon.$ Thus   
$
\Vert \pr(t,\cdot+m_t-\mu t)  -h_*(\cdot-C)\Vert_\infty\to 0$  as 
$t \to \infty$,
which is the Theorem.
\end{proof}

It now remains to prove Lemma \ref{u^T and w are close}.
We start with
\begin{lem}
\label{lem:maximalAt0}For any $\epsilon>0$ there exists $T_\epsilon$ such
that for all $t\ge T\ge T_\epsilon$ one has
$\auxsol^{T}(t,0)\leq\epsilon/(1+\epsilon).$\end{lem}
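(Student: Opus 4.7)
The plan is to decompose $\auxsol^T(t,0)$ using the branching Markov property at the intermediate time $s:=t-T\ge 0$. Starting from a single particle at $0$ and conditioning on $\cF_s$, the event that no particle lies in $(-\infty,0]$ throughout $[t-T,t]$ requires first that $M(s):=\min_{u\in\Nall(s)}X_u(s)>0$ and then, using the branching property, that each subtree rooted at a particle present at time $s$ avoid the origin during the subsequent time interval of length $T$. Hence
\[
\auxsol^T(t,0)
=\E^0\!\Big[\indic{M(s)>0}\prod_{u\in\Nall(s)}\pr(T,X_u(s))\Big].
\]
Since $\pr(T,\cdot)\le 1$ and is increasing, keeping only the factor from the leftmost particle gives
\[
\auxsol^T(t,0)\le \E^0\!\big[\pr(T,M(s))\,\indic{M(s)>0}\big].
\]

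Given $\epsilon>0$, set $\delta:=\epsilon/(2(1+\epsilon))$ and split the analysis into two ranges of $s$. In regimes A and B we have $\mu<\sqrt{2\beta}$, so the leftmost particle of the unkilled BBM satisfies $M(s)/s\to\mu-\sqrt{2\beta}<0$ almost surely, and consequently $\P^0[M(s)>0]\to 0$ as $s\to\infty$. Choose $R$ large enough that $\sup_{s\ge R}\P^0[M(s)>0]\le\delta$; for $s\ge R$ the trivial bound $\pr\le 1$ then yields $\auxsol^T(t,0)\le\P^0[M(s)>0]\le\delta$.

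For $s\in[0,R]$ a secondary truncation at a height $L$ gives
\[
\auxsol^T(t,0)\le \pr(T,L)+\P^0[M(s)>L].
\]
To dominate $\P^0[M(s)>L]$ uniformly in $s\in[0,R]$, fix any distinguished lineage $u_0$ (for instance always following the first-born child); its position $X_{u_0}(s)=\mu s+B_s$ is a Brownian motion with drift, and $M(s)\le X_{u_0}(s)$, so $\P^0[M(s)>L]\le \P[\mu s+\sqrt{s}\,Z>L]$ where $Z\sim\mathcal N(0,1)$. Because $s\mapsto|\mu|s+\sqrt s\,|Z|$ is increasing on $(0,R]$ for fixed $|Z|$, a single $L=L(R,\delta)$ makes this probability $\le\delta$ for all $s\in[0,R]$. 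Finally, since we are in regimes A or B we have $\TW\equiv 0$; using that $\pr(T,x)\searrow\TW(x)=0$ as $T\to\infty$ (the function $\pr$ is decreasing in $t$), we pick $T_\epsilon$ so that $\pr(T,L)\le\delta$ for all $T\ge T_\epsilon$.

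Combining the two cases yields $\auxsol^T(t,0)\le 2\delta=\epsilon/(1+\epsilon)$ uniformly for $t\ge T\ge T_\epsilon$. The delicate point is not any single estimate but the uniformity in the "short-time" range $s\in[0,R]$: this is precisely what forces the introduction of the distinguished lineage $u_0$ and the monotonicity of $|\mu|s+\sqrt s\,|Z|$, rather than a naive pointwise bound which would degenerate as $s\downarrow 0$ or be useless as $s\uparrow R$.
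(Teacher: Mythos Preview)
Your argument is correct but considerably more involved than necessary. The paper's proof is one line: since the event ``all particles positive throughout $[t-T,t]$'' is contained in the event ``all particles positive at time $t$'', one has directly
\[
\auxsol^T(t,0)\le \P^0\big(\min_{u\in\Nall(t)}X_u(t)>0\big),
\]
and the right-hand side depends only on $t$ (not on $T$) and tends to $0$ as $t\to\infty$ because $\mu<\sqrt{2\beta}$. Choosing $T_\epsilon$ so that this probability is at most $\epsilon/(1+\epsilon)$ for all $t\ge T_\epsilon$ finishes the proof.

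You essentially reprove this same fact in your case $s\ge R$ (with $s=t-T$ in place of $t$), but then add an unnecessary second case $s\in[0,R]$ requiring a spatial truncation at level $L$, a single-lineage comparison, and the convergence $\pr(T,L)\searrow 0$. All of this machinery stems from conditioning at the \emph{left} endpoint $t-T$ of the window and then propagating forward, which makes the bound depend on both $s$ and $T$; the paper instead looks only at the \emph{right} endpoint $t$, which decouples the bound from $T$ entirely. Your route would be the natural one if the endpoint configuration carried no useful information (e.g.\ if the event concerned only an initial segment $[0,T]$ rather than a terminal one), but here it is overkill.
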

(The $1+\epsilon$ in the denominator makes the following easier.)
\begin{proof}
We use the representation (\ref{eq:representation}). Let $t\geq T$;
obviously
\[
\auxsol^{T}(t,0)\leq\mathbb{P}^{0}(\min_{ u \in  \Nall(t)} X_{u}(t) > 0)=
 h(t,-\mu t),
\]
where $h$ is the solution of \eqref{reallyKPP}. $h(t,-\mu t)$ is by
definition the
probability that the leftmost particle at time $t$ of a  
driftless branching Brownian motion is to the right of $-\mu t$; it is also
the probability that the leftmost particle at time $t$ of a branching
Brownian motion with drift $\mu$ is to  the right of zero. For $\mu<
\sqrt{2\beta}$ (regimes A and B), this probability is known to tend to zero
when $t\to \infty$. 
\end{proof}
The next step is the following Lemma:
\begin{lem}
\label{lem:supDistance}For any $\epsilon>0$ and any $T>T_\epsilon$ one has
\[
(1+\epsilon)\auxsol^{T}(t,x)-\epsilon\leq\pr(t,x)\leq\auxsol^{T}(t,x),\quad(t,x)\in[T,\infty)\times\R_{+}.
\]
\end{lem}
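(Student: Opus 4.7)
The upper bound $\pr(t,x)\le\auxsol^T(t,x)$ is immediate from the probabilistic interpretations: since the branching Brownian motion with absorption and its absorption-free counterpart coincide up to the first absorption time, $\pr(t,x)$ equals the probability that no particle of the branching Brownian motion started at $x$ visits $(-\infty,0]$ during all of $[0,t]$. This is a stronger event than the one defining $\auxsol^T(t,x)$, which only demands no particle lie in $(-\infty,0]$ during the shorter window $[t-T,t]$.

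For the lower bound, the plan is to apply the parabolic maximum principle to the auxiliary function $\bar u(t,x):=(1+\epsilon)\auxsol^T(t,x)-\epsilon$ on $[T,\infty)\times\R_+$, and show $\bar u\le\pr$ there; this is exactly the desired inequality. Since both $\auxsol^T$ and $\pr$ solve the same semilinear parabolic equation $\partial_t v=\tfrac12\partial_{xx}v+\mu\partial_x v+\beta(v^2-v)$ on $[T,\infty)\times\R_+$, a direct computation yields
\[
\partial_t \bar u-\tfrac12\partial_{xx}\bar u-\mu\partial_x\bar u-\beta(\bar u^2-\bar u)=-\epsilon(1+\epsilon)\beta\,(\auxsol^T-1)^2\le 0,
\]
so $\bar u$ is a KPP subsolution while $\pr$ is a classical solution. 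The essential algebraic step is the one-line identity $\bar u^2-\bar u-(1+\epsilon)\bigl((\auxsol^T)^2-\auxsol^T\bigr)=\epsilon(1+\epsilon)(\auxsol^T-1)^2$, which makes the sign of the discrepancy explicit.

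Next I would verify that $\bar u\le\pr$ on the parabolic boundary of $[T,\infty)\times\R_+$, i.e.\ at $t=T$ and at $x=0$. At $t=T$, by construction $\auxsol^T(T,\cdot)=\pr(T,\cdot)$, so $\bar u(T,x)=(1+\epsilon)\pr(T,x)-\epsilon\le \pr(T,x)$, using only $\pr(T,x)\le 1$. At $x=0$, Lemma~\ref{lem:maximalAt0} gives $\auxsol^T(t,0)\le\epsilon/(1+\epsilon)$ whenever $T\ge T_\epsilon$, so that $\bar u(t,0)\le 0=\pr(t,0)$. Applying the comparison principle to $\pr-\bar u$ on $[T,\infty)\times\R_+$ then propagates $\bar u\le\pr$ to the whole domain.

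The main point to handle carefully is the maximum principle on the unbounded parabolic strip $[T,\infty)\times\R_+$. This is standard rather than delicate: both $\pr$ and $\bar u$ are uniformly bounded (in $[-\epsilon,1+\epsilon]$), so the nonlinearity $v\mapsto\beta(v^2-v)$ is Lipschitz on the relevant range, and the usual comparison theorem for bounded sub- and supersolutions of semilinear parabolic equations with Dirichlet boundary data applies without any Phragm\'en--Lindel\"of-type growth assumption. Once this framework is in place, the identity giving the sign of $\bar u^2-\bar u-(1+\epsilon)((\auxsol^T)^2-\auxsol^T)$ together with the two boundary checks above complete the proof.
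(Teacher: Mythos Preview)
Your proof is correct and essentially the same as the paper's: both use the affine map $w\mapsto(1+\epsilon)w-\epsilon$ together with the parabolic comparison principle, the only cosmetic difference being that the paper applies the inverse map to $\pr$ (defining $\tilde\pr=(\pr+\epsilon)/(1+\epsilon)$ and showing it is a KPP supersolution dominating $\auxsol^T$) whereas you apply the map to $\auxsol^T$ (defining $\bar u=(1+\epsilon)\auxsol^T-\epsilon$ and showing it is a KPP subsolution dominated by $\pr$). Your explicit identity $\bar u^2-\bar u-(1+\epsilon)\big((\auxsol^T)^2-\auxsol^T\big)=\epsilon(1+\epsilon)(\auxsol^T-1)^2$ is exactly the dual of the paper's inequality $(\tilde\pr-1)(\tilde\pr-\epsilon+\epsilon\tilde\pr)\ge(\tilde\pr-1)\tilde\pr$.
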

(The $T_\epsilon$ in Lemma~\ref{lem:supDistance} is the same as in 
Lemma~\ref{lem:maximalAt0}.)
\begin{proof}
$\pr \leq \auxsol^T$ follows immediately from their definitions as probabilities. Let us introduce
\[
\tilde{\pr}(t,x):=\frac{\pr(t,x)+\epsilon}{1+\epsilon}.
\]
We have that
\[
(1+\epsilon)\partial_t\tilde{\pr}
=(1+\epsilon)\frac{1}{2}\partial_{xx}\tilde{\pr}
+(1+\epsilon)\mu\partial_x\tilde{\pr}+\beta\big[\big(
(1+\epsilon)\tilde{\pr}-\epsilon\big)^{2}-
\big((1+\epsilon)\tilde{\pr}-\epsilon\big)\big]
\]
Performing simple calculations we arrive at 
\begin{align*}
\partial_t\tilde{\pr}&=\frac{1}{2}\partial_{xx}\tilde{\pr}
+\mu\partial_x\tilde{\pr}+\beta(\tilde{\pr}-1)(\tilde{\pr}-\epsilon+\epsilon\tilde{\pr})\\
&\ge \frac{1}{2}\partial_{xx}\tilde{\pr}+
\mu\partial_x\tilde{\pr}+\beta (\tilde{\pr}-1)\tilde{\pr}
\end{align*}
since $\tilde u\le1$ and
$\epsilon>0$.

Now, for any $T>T_\epsilon$, we have with Lemma \ref{lem:maximalAt0}
\[
\auxsol^{T}(t,0)\leq \frac\epsilon{1+\epsilon} =
\tilde{\pr}(t,0)
,\qquad t\geq T.
\]
Moreover one checks directly that 
\[
\auxsol^{T}(T,x) =\pr(T,x) \le \tilde{\pr}(T,x) ,\quad x\geq0.
\]
By the parabolic maximum principle (and the unicity of solutions)
\cite{BerestyckiHamel}
we get that for any $T>T_\epsilon$
\[
\auxsol^{T}(t,x) \le \tilde{\pr}(t,x),\quad   \forall(t,x)\in(T,\infty)\times\R_{+}.
\]
This proves the first inequality and thus concludes the proof of the lemma.
\end{proof}
Lemma~\ref{lem:supDistance} implies that
$|\pr(t,x) -\auxsol^T(t,x)| \le \epsilon (1-\auxsol^T(t,x)) \le \epsilon$
for each $x\in \R_+$ and each $t$ and $T$ with $t\ge T \ge T(\epsilon)$,
which is the first assertion of Lemma~\ref{u^T and w are close}.

The last step is then to prove that $C_T$ has a limit~$C$ for large~$T$.

As $\pr(t,\cdot)$ is strictly increasing and continuous,
$\pr(t,0)=0$ and $\lim_{x\to+\infty}\pr(t,x)=1$, we may define $m_{\frac12}:(0,+\infty)\mapsto\R_{+}$
by 
\[
\pr\big(t,m_{\frac12}(t)\big)=1/2.
\]
Fix $\epsilon>0$. We have that
\begin{align*}
\big|1/2-h_*\big(m_{\frac12}(t)-m_t+\mu t-C_{T}\big)\big|
&\le
\big|\pr\big(t,m_{\frac12}(t)\big)-\auxsol^{T}\big(t,m_{\frac12}(t)\big)\big|
\\&\quad+\big|\auxsol^{T}\big(t,m_{\frac12}(t)\big)
		-h_*\big(m_{\frac12}(t)-m_t+\mu t-C_{T}\big)\big|,
\\&\leq2\epsilon,
\end{align*}
as long as $T$ and $t$ are large enough by  \eqref{u^T converge} and
\eqref{E:U^T converge to w}. From this we deduce 
\begin{equation}
m_{\frac12}(t)-m_t+\mu t-C_{T}\in  \big[ h_*^{-1}(1/2-2\epsilon),
h_*^{-1}(1/2+2\epsilon) \big].\label{eq:keyEstimate1}
\end{equation}
Consequently
\[
\limsup_{t\to+\infty}\big[m_{\frac12}(t)-m_t+\mu t\big]
-\liminf_{t\to+\infty}\big[m_{\frac12}(t)-m_t+\mu t\big]
\leq
h_*^{-1}(1/2+2\epsilon)-h_*^{-1}(1/2-2\epsilon).
\]
Since $\epsilon$ can be chosen arbitrarily small we have
that $\lim_{t\to+\infty}\big[m_{\frac12}(t)-m_t+\mu t\big]=C$, for
some constant $C\in\R$.
This and  (\ref{eq:keyEstimate1}) immediately yields that 
\[
\lim_{T\to+\infty}C_{T}=C,
\]
where we used that $h_*^{-1}(1/2)=0$.
This concludes the proof of Lemma~\ref{u^T and w are close}.

%%%%%%%%%%%%%%%%%%%%%%%%%%%%

\section{Radius of convergence and asymptotic behavior of $S_0$}
\label{furthers0}

In Section~\ref{series}, we related the $\TW_s(x)$ to a function
$x\mapsto\Phi(z)$ defined as a series of which the coefficients $a_n$
follows the recursive equation~\eqref{eq:expansionCoefficients}. We write here
the same property in a slightly different but equivalent way. Let
$p\in(0,1]$  be defined by
$$
p:=\frac{2\beta}{r^2},
$$
and introduce $\Psi^{(p)}(z)=p\Phi(z/p)$ and $b_n^{(p)}=a_n/p^{n-1}$. These
quantities satisfy the relations
\begin{equation}
\Psi^{(p)}(z)=\sum_{n\ge1}b_n^{(p)}z^p,\qquad
b_1^{(p)}=1,\qquad  b_n^{(p)}=\frac{1}{(n-1)(n-p)}\sum_{j=1}^{n-1}
b_j^{(p)} b_{n-j}^{(p)},\quad n\ge2.\label{eq:rescaledFunction}
\end{equation}
Let $\mathcal R^{(p)}$ be the radius of convergence of $\Psi^{(p)}$.
We know that there exists a $B_{s_0}$ relating $\Psi^{(p)}$ and $\TW_{s_0}$
through
$$\TW_{s_0}(x)=1-\frac1p\Psi^{(p)}(p B_{s_0} e^{-rx}).$$
The following observation will be useful. Since $\TW_{s_0}'(0)=0$ and $\TW_{s_0}''(0)<0$, the function $\TW_{s_0}$ (defined on a domain containing zero) has a local maximum in zero. This implies that for $p>0$ the function $\Psi^{(p)}$ has a local minimum in $m^{(p)}:=p B_{s_0}<0$. In fact $m^{(p)}$ is the first local minimum (and indeed the first point where the first derivative cancels) one encounters left of zero for $\Psi^{(p)}.$

%Remark that for $\mu/\sqrt\beta$ large, one has $r\sim2\mu$ and $p\sim\beta/(2\mu^2)$. We are going to show that, for $\mu/\sqrt\beta$ large enough (that is: $p$ small enough), the radius of convergence of $\Psi^{(p)}$ is larger than $p|B_{s_0}|$ (so that the whole function $\TW_{s_0}$ is described by the series expansion),

% and that both $|B_{s_0}|$ and $s_0$ diverge as $\mu^2/\beta$ (or $1/p$)  when $\mu/\sqrt\beta\to\infty$ (or $p\to0$). The crucial point of the proof will be to compare $\Psi^{(p)}$  with $\Psi^{(0)}$ which is both the limit of the $\Psi^{(p)}$ when $p\searrow 0$ and given by   \eqref{eq:rescaledFunction} with $p=0$.

The steps of the proof are the following: 
\begin{enumerate}
\item We show that ${\mathcal R}^{(p)}\ge 4$  for small enough $p$ (including $p=0$).
\item We prove  that there exists  $m^{(0)}  \in (-3,0]$ which is the first minimum one encounters left of zero for $\Psi^{(0)}$ and that 
\begin{equation}
	[\Psi^{(0)}]'(x)<0, x\in [a,m^{(0)}),\quad \text{and}\quad [\Psi^{(0)}]'(x)>0, x\in (m^{(0)},0],\label{eq:derivativeControl}
\end{equation}
for some $a\in[-3,m^{(0)}]$.
\item We show that $[\Psi^{(p)}]'$ converges to $[\Psi^{(p)}]'$ uniformly on $(-3,0]$. This implies that 
\begin{equation}
	\lim_{p\searrow 0}m^{(p)}= m^{(0)} \in(-3,0).\label{eq:mpconvergence}
\end{equation}
\item %The above points conclude the proof. 
Since $|pB_{s_0}| \to |m^{(0)}|<4$ we conclude that  that $B_{s_0}$ is within the radius of convergence of $\Phi$ for $p$ small enough. 
The identity
$$
s_0 = w_{s_0}(0) =1-\Phi(B_{s_0}) =1-\Psi^{(p)}(m^{(p)})/p.
$$
shows that 
\[
	\lim_{p\searrow 0} p s_0(p) = \Psi^{(0)}(m^{(0)}),
\]
where we made the dependance of $s_0$ on $p=2\beta/r^2$ explicit. 
\end{enumerate}
We now prove these points.
\begin{enumerate}
\item The key remark is that if for a real $\alpha>0$ and an integer
$n_0$, one has $b_n^{(p)}\le (n_0-p) \alpha^{-n}$ for all $n\in\{1,\ldots,
n_0-1\}$ then, as can be shown by a very simple recursion, the property
$b_n^{(p)}\le (n_0-p) \alpha^{-n}$ holds for all~$n\ge1$.

Computing the first values of $b_n^{(0)}$, one checks easily that the
maximum of $4^n b_n^{(0)}$ for $n\in\{1,\ldots,14\}$ is around 14.14. 
For $p$ small enough, by
continuity of $p\mapsto b_n^{(p)}$, the maximum of $b_n^{(p)}$ for
$n\in\{1,\ldots,14\}$ will be no more than $15-p$ and hence one has
\begin{equation}
b_n^{(p)}\le 15\times4^{-n},\qquad\text{(for $p$ small enough)}
\label{boundb}
\end{equation}
As a consequence, $\mathcal R^{(p)}\ge4$ for $p$ small enough (including $p=0$).

\item The bound \eqref{boundb} applies for $p=0$. 
Thus, for any $z\in[-3,3]$ using only the 
the fifty first terms of the expansion leads to an error of at most
$\sum_{n\ge51} 15\times(3/4)^n<3\,10^{-5}$. In that way we computed
$\Psi^{(0)}(-3)\approx -0.8528$ and $\Psi^{(0)}(-2.5)\approx-0.8575$.
Therefore $\Psi^{(0)}(-2.5)$ is smaller than both $\Psi^{(0)}(-3)$ and
$\Psi^{(0)}(0)=0$, and the function $\Psi^{(0)}$ must have a minimum in
$(-3,0)$. In other words we proved $m^{(0)} \in (-3,0)$. It is easy to check that $\sum_{n=1}^{50}  n(n-1) a_nx^{n-2}\geq 0.7$ for $x\in [-3,0]$. Estimating an error by $\sum_{n\geq 51} 15 n(n-1) (3/4)^n <0.074 $ we conclude that $[\Psi^{(0)}]''(x)>0$ for $x\in [-3,0]$. In this way we get \eqref{eq:derivativeControl}.

\item By \eqref{boundb} there exist $p_0>0$ and $C>0$ such that  the functions $[\Psi^{(p)}]'$
are analytic in $[-3,0]$ and $\sup_{p\in[0,p_0],x\in[-3,0]} |\Psi^{(p)}(x)|<C$. By
continuity of $p\mapsto b_n^{(p)}$, for any
$x\in[-3,0]$ we have $[\Psi^{(p)}]'(x)\to[\Psi^{(0)}]'(x)$. The
Vitali-Proter theorem strengthen this to uniform convergence. This together with \eqref{eq:derivativeControl} implies easily \eqref{eq:mpconvergence}.
\end{enumerate}

% ==============================\\
%  The
% Vitali-Proter theorem strengthen this to uniform convergence. These
% implies that for any $\epsilon>0$ (small enough) there exists $p_{0}$
% such that for $p<p_{0}$ we have $[\Phi^{(p)}]'(x)>0$ for $x\in[m^{(0)}+\epsilon,0]$
% and $[\Phi^{(p)}]'(x)<0$ for $x\in[3,m^{(0)}-\epsilon]$. This proves
% that $m^{(p)}\in[m^{(0)}-\epsilon,m^{(0)}+\epsilon]$.
%
% ~
%
% Proof of the fact that $[\Phi^{(p)}]'$ is uniformly bounded on $[-4,0]$
% (for small enough $p$).
%
% Let us fix some $p_{0}$ and study
% \begin{align*}
% \sup_{p\in[0,p_{0}]}\sup_{x\in[-4,0]}|[\Phi^{(p)}]'(x)| & \leq\sup_{p\in[0,p_{0}]}\sup_{x\in[-4,0]}|\sum_{j=1}^{+\infty}jb_{j}^{(p)}x^{j}|\leq\sup_{p\in[0,p_{0}]}\sup_{x\in[-4,0]}|\sum_{j=1}^{+\infty}jb_{j}^{(p)}x^{j}|\\
%  & \leq\sup_{p\in[0,p_{0}]}\sup_{x\in[-4,0]}\sum_{j=1}^{+\infty}jb_{j}^{(p)}|x|^{j}=\sup_{p\in[0,p_{0}]}\sum_{j=1}^{+\infty}jb_{j}^{(p)}4^{j}=\sum_{j=1}^{+\infty}jb_{j}^{(p_{0})}4^{j},
% \end{align*}
% where we used the fact that $b_{j}^{p}$ decrease with respect to
% $p$. Finally, we know that for $p_{0}$ small enough the last sum
% is finite.
%
%
% =========
\subsection*{Acknowledgments} PM's research was supported by NCN grant DEC-2012/07/B/ST1/03417".

\bibliographystyle{abbrv}
\bibliography{bibliography}

\end{document}